\renewcommand{\and}{et}  
\newtheorem{theorem}{Th\'eor\`eme}
\newtheorem{definition}[theorem]{D\'efinition}
\newcommand{\ssize}{\text{size}\,}
\newtheorem{lemma}[theorem]{Lemme}
\newtheorem{corollary}[theorem]{Corollaire}
\newtheorem{proposition}[theorem]{Proposition}
\newtheorem*{remark}{Remarque}
\newtheorem*{assum-L}{Hypoth\`eses sur $L$}
\DeclareMathOperator{\dive}{div}
\newcommand{\sssize}{\widetilde{\text{size}\,}}
\newcommand{\size}{{\text{size}\,}}
\newcommand{\one}{\mathbf{1}}
\newcommand{\rr}{\mathbb}
\newcommand{\ii}{\mathscr}
\newcommand{\ic}{\mathcal}
\newcommand{\ci}{\tilde{\chi}}
\newcommand{\loc}{\text{loc}}
\newcommand{\ds}{\displaystyle}
\newtheorem{question*}{Question}
\newtheorem*{main*}{\underline{Induction statement}}
\newcommand{\osc}{{\textrm osc}}
\newcommand{\mcS}{{\mathcal S}}
\newcommand{\BMO}{{\textrm BMO}}
\newcommand{\Dy}{{\mathbb D}}
\newcommand\E{\mathbb{E}}
\def\Xint#1{\mathchoice
   {\XXint\displaystyle\textstyle{#1}}%
   {\XXint\textstyle\scriptstyle{#1}}%
   {\XXint\scriptstyle\scriptscriptstyle{#1}}%
   {\XXint\scriptscriptstyle\scriptscriptstyle{#1}}%
   \!\int}
\def\XXint#1#2#3{{\setbox0=\hbox{$#1{#2#3}{\int}$}
     \vcenter{\hbox{$#2#3$}}\kern-.5\wd0}}
\def\aver#1{\Xint-_{#1}}
\title[Contr\^ole \'epars d'un op\'erateur et applications]{Conservation de certaines propri\'et\'es \`a travers un contr\^ole \'epars d'un op\'erateur et applications au projecteur de Leray-Hopf}
\author{Cristina Benea \& Fr\'{e}d\'{e}ric Bernicot}
\address{Cristina Benea \& Fr\'{e}d\'{e}ric Bernicot, CNRS - Universit\'{e} de Nantes, Laboratoire Jean Leray, Nantes 44322, France}
\email{cristina.benea@univ-nantes.fr \& frederic.bernicot@univ-nantes.fr}
\date{\today}
\keywords{Op\'erateurs \'epars, poids, espaces de Hardy et BMO}
\subjclass{42B15, 42B25, 42B35}
\thanks{Les auteurs sont support\'es par le projet ERC FAnFArE no. $637510$}
\begin{document}

\begin{abstract} Nous poursuivons l'\'etude d'un contr\^ole \'epars d'un op\'erateur singulier. Plus pr\'ecis\'ement nous expliquons comment on peut conserver certaines propri\'et\'es de l'op\'erateur initial \`a travers un tel contr\^ole et d\'ecrivons quelques applications: bornitude de l'adjoint de la transform\'ee de Riesz et du projecteur de Leray. De plus, nous nous int\'eresserons \`a donner un regard nouveau sur les dominations \'eparses \`a travers les oscillations et les fonctions carr\'ees localis\'ees. Aussi, nous d\'evoilerons une connexion entre les bons intervalles de la d\'ecomposition \'eparse et une d\'ecomposition atomique.
\end{abstract}

\maketitle

\begin{quote}
\footnotesize\tableofcontents
\end{quote}

\section{Introduction}

Ce travail a pour but de poursuivre l'\'etude d'un contr\^ole \'epars d'un op\'erateur, et de comprendre comment on peut conserver certaines propri\'et\'es au travers d'un tel contr\^ole.
Rappelons tout d'abord qu'une collection de cubes $\mcS:=(Q)_{Q\in \mcS}$ de l'espace Euclidien ${\rr R}^n$ est dite {\it \'eparse} si il existe une collection d'ensembles majeurs disjoints (voir Definition \ref{def:sparse}).  Associ\'e \`a une telle collection, on peut consid\'erer l'op\'erateur \'epars d\'efini par:
$$ T_{\mcS}(f) := x \mapsto  \sum_{Q\in \mcS} \left( \aver{Q} f(z) \, dz\right) {\bf 1}_{Q}(x)   , \qquad \forall f\in L^1_{loc}.$$
L'\'etude des op\'erateurs \'epars a commenc\'e dans \cite{Lernerfirst,CUMP} o\`u il est d\'emontr\'e que ces op\'erateurs v\'erifient des estimations \`a poids $L^p(\omega)$ optimales pour des poids de Muckenhoupt $\omega \in A_p$.

 R\'ecemment, ces op\'erateurs ont connu un tr\`es grand int\'eret, puisqu'ils permettent d'avoir une preuve relativement simple du th\'eor\`eme $A_2$ (Lerner \cite{Lerner1,Lerner2,Lerner-simplerA_2} et Lacey \cite{LaceyA_2}) et de nombreux travaux ont contribu\'e au d\'eveloppement de cette approche: contr\^ole ponctuel d'un op\'erateur de Calder\'on-Zygmund par un op\'erateur \'epars, contr\^ole de diff\'erents op\'erateurs singuliers (transform\'ees de Riesz associ\'ees \`a un semi-groupe \cite{BernicotFreyPetermichl}, Bochner-Riesz \cite{BeneaBernicotLuque}, BHT \cite{weighted_BHT}, op\'erateurs singuliers homog\`enes \cite{CCDO}, ...) \`a travers une forme bilin\'eaire (ou trilin\'eaire) \'eparse ... 

La domination \'eparse d'un op\'erateur lin\'eaire $T$ ou de sa forme bilin\'eaire $\Lambda_T= (f,g) \mapsto \langle Tf,g \rangle$ s'est r\'ev\'el\'ee utile pour montrer des estimations aux poids optimales: en effet ``{\it le caract\`ere \'epars laisse de la place au poids pour n'intervenir qu'une seule fois}''. Un aspect important vient  aussi de l'op\'erateur maximal \`a poids qui est born\'e sur des espaces $L^p$ \`a poids de mani\`ere ind\'ependante de la caract\'eristique du poids.  

En g\'en\'eral, le mieux qu'on peut esp\'erer pour la forme bilin\'eaire d'un op\'erateur de Calder\'on-Zygmund, est la domination \'eparse (dans le sens que la collection $\ic S$ est \'eparse) avec des moyennes dans $L^1$:
\begin{equation}
\vert  \langle Tf, g  \rangle \vert \lesssim  \sum_{Q \in \ic S}\aver{Q}\vert f \vert dx \cdot \aver{Q}\vert g \vert dx \cdot \vert Q \vert. \label{eq:epars}
\end{equation}

\medskip

La philosophie d'un contr\^ole \'epars d'un op\'erateur est la suivante: {\it dominer un op\'erateur singulier par une repr\'esentation g\'eom\'etrique (sur une collection d'intervalles ou cubes \'eparses, c'est \`a dire 'rar\'efi\'e') \`a l'aide d'op\'erateurs \'el\'ementaires positifs (localis\'es \`a l'\'echelle des diff\'erents cubes) qui sont facilement estimables (dans le cas ci-dessus, des op\'erateurs de moyennes dans $L^1$).}

Dans le membre de droite du contr\^ole \'epars, on a une expression qui semble avoir perdu les propri\'et\'es de l'op\'erateur de Calder\'on-Zygmund initial, mais on a une forme bilin\'eaire qui implique la bornitude $L^p \to L^p$ de l'op\'erateur $T$ ainsi que des estimations \`a poids. Dans ce travail, nous souhaitons comprendre quelles sont les informations suppl\'ementaires que l'on peut conserver au travers d'un contr\^ole \'epars et nous d\'ecrirons plusieurs applications.

\medskip

Avant d'expliciter les diff\'erentes propri\'et\'es que nous allons \'etudier, nous souhaitons motiver l'obtention d'un contr\^ole \'epars, autrement que par l'obtention des estimations \`a poids:  un op\'erateur (ou une forme bilin\'eaire) \'epars est un op\'erateur {\it auto-adjoint} (positif), qui v\'erifie de bonnes estimations (quantifi\'ees) dans des espaces \`a poids, ainsi que des estimations de type faible $L^1$ (ce qu'on peut montrer en utilisant une d\'ecomposition de Calder\'on-Zygmund (sic!)). Par cons\'equent, si un op\'erateur admet un contr\^ole \'epars, alors son adjoint est de type faible $L^1$ (voir Corollaire \ref{coro:typefaible}). Nous appliquerons ce principe dans le cadre des transform\'ees de Riesz, fonctionnelles carr\'es, et projecteur de Leray associ\'es \`a un op\'erateur elliptique du second ordre $\dive(A\nabla \cdot)$, obtenant de nouvelles estimations de type faible $L^1$ pour ces op\'erateurs (voir Section \ref{sec:leray}).

\medskip

\large{ \bf Estimation \'eparse et r\'egularit\'e.}
Les sections \ref{sec:T1=0} et \ref{sec:gradient} sont d\'edi\'ees \`a l'\'etude de propri\'et\'es de r\'egularit\'e, et comment on peut les conserver via un contr\^ole \'epars. En particulier, nous obtiendrons d'abord une repr\'esentation \'eparse d'un op\'erateur de Calder\'on-Zygmund v\'erifiant la condition $T(1)=0$ en termes d'oscillation. Ces oscillations permettent de conserver le caract\`ere $T(1)=0$ \`a l'\'echelle \'el\'ementaire de la repr\'esentation \'eparse. Une telle am\'elioration nous permettra de faire quelques observations sur la composition d'op\'erateurs de Calder\'on-Zygmund. \\
Puis en Section \ref{sec:gradient}, nous nous int\'eresserons au gradient d'un op\'erateur de Calder\'on-Zygmund r\'egulier, et nous verrons que l'on peut, en un certain sens, commuter l'op\'erateur gradient avec la propri\'et\'e de repr\'esentation \'eparse. Ceci nous permettra d'en d\'eduire des estimations optimales pour un tel op\'erateur dans des espaces de Sobolev \`a poids.

\medskip

\large{ \bf Estimation \'eparse pour un multiplicateur de Fourier/Haar via un principe de localisation.}
Le fait qu'un op\'erateur $T$ born\'e sur $L^2$ puisse admettre un contr\^ole \'epars comme \eqref{eq:epars} est un peu surprenant: si on utilise la bornitude $L^2 \to L^2$ de l'op\'erateur, on  obtient
\[
\vert \langle T f, g  \rangle \vert \lesssim \|f\|_2 \cdot \|g\|_2,
\] 
et si on localise ce r\'esultat, on peut dire que la version localis\'ee sur un cube $Q$ de $\Lambda_{T_Q}$ est born\'ee par des moyennes en $L^2$:
\begin{equation}
\label{eq:Holder-L2}
\vert \langle T_{Q} f, g  \rangle \vert \lesssim \big(\aver{Q}\vert f \vert^2 dx\big)^{\frac{1}{2}} \cdot \big(\aver{Q}\vert g \vert^2 dx\big)^{\frac{1}{2}} \cdot \vert Q \vert.
\end{equation}
Mais en effet, l'id\'ee du contr\^ole \'epars est de dire qu'en choisissant les bons cubes sur lesquels on d\'ecompose, on peut avoir des moyennes $L^1$.

Dans la section \ref{sec:haar}, nous allons nous sp\'ecialiser \`a un cadre o\`u l'op\'erateur $T$ (et sa forme bilin\'eaire) peuvent facilement \^etre localis\'es. Cette situation est le cas des multiplicateurs de Fourier, ou de Haar. Nous verrons alors comment peut-on tirer avantage et conserver la structure {fr\'equentielle} de l'op\'erateur initial \`a travers une estimation \'eparse. \\
En effet, par exemple si $T$ admet une repr\'esentation simple par des ondelettes, alors \eqref{eq:Holder-L2} est remplac\'ee par la m\^eme estimation, mais avec des moyennes $L^2$ de la fonction carr\'ee (localis\'ee) de $f$ et de $g$, respectivement. Apr\`es, on peut se servir du th\'eor\`eme de John-Nirenberg pour obtenir des moyennes dans $L^{1, \infty}$ de la fonction carr\'ee, chacune d'elles pr\'ec\'ed\'ee par un `$\sup$' sur une collection d'intervalles (ou cubes) dyadiques. Avec un temps d'arr\^et soigneusement construit, on peut \`a la fin obtenir une domination \'eparse, avec des moyennes $L^1$ des fonctions $f$ et $g$.  

Dans l'analyse de temps-fr\'equence, on utilise souvent des structures contenues dans le plan espace-fr\'equence, et l'information sur la fonction $f$ est conserv\'ee dans deux quantit\'es: le `size' et l'`energy'. L'\'energie concerne plut\^ot l'orthogonalit\'e des paquets d'onde, et pour un op\'erateur de Fourier classique, on peut le transformer en `size' (qui est une moyenne maximale en $L^1$ ou $L^{1, \infty}$), en localisant sur des bons intervalles. Cette id\'ee de transformer les moyennes $L^2$ en moyennes $L^1$ apparait souvent dans ce secteur de l'analyse.

De plus, la localisation, comme present\'ee dans la Proposition \ref{prop:local-Hilbert}, est d\'ej\`a apparue dans \cite{vv_BHT}, o\`u les auteurs emploient une telle localisation en espace (construite par un temps d'arr\^et gouvern\'e par les ensembles de niveau du `size') afin de montrer des estimations \`a valeurs vectorielles pour les paraproduits et pour la transform\'e bilin\'eaire de Hilbert $BHT$.

Observons que dans le cas le plus facile, d'un multiplicateur de Haar, la moyenne $L^2$ de la fonction carr\'ee de $f$ correspond exactement \`a l'oscillation $L^2$ de $f$, et il est alors plus \'evident qu'on doit utiliser le th\'eor\`eme de John-Nirenberg pour obtenir l'oscillation $L^1$. 

Ces id\'ees sont pr\'esent\'ees dans section \ref{sec:haar}. Plus pr\'ecis\'ement, dans la sous-section \ref{subsec:p>1}, on expose ces id\'ees pour obtenir une estimation \'eparse, permettant de retrouver les estimations \`a poids dans $L^p(\omega)$ pour $p>1$. Dans la sous-section \ref{subsec:carre}, on d\'ecrit comment on peut conserver l'information fr\'equentielle, \`a l'aide des fonctions carr\'ees localis\'ees dans la repr\'esentation \'eparse. Le th\'eor\`eme de John-Nirenberg, dans la forme de \cite{multilinear_harmonic}, qu'on peut g\'en\'eraliser pour des espaces \`a poids (d'une mani\`ere ind\'ependante du poids), nous permet d'employer n'importe quelle moyenne $L^p$ de la fonction carr\'ee, avec $0<p <\infty$. Par cons\'equent, en utilisant un temps d'arr\^et similaire \`a celui pour la d\'ecomposition atomique \'eparse,  on en d\'eduit alors dans la sous-section \ref{subsec:p<1}, des estimations dans des espaces de Hardy \`a poids $H^p(\omega)$ pour $p\in(0,1]$:
\[
T: H^p_\omega \to H^p_\omega, \quad \text{ pour tout poids    } \omega \quad \text{et pour tout  }  0<p \leq 1,
\] 
avec une norme ind\'ependante du poids (am\'eliorant ainsi des r\'esultats de Lee,Lin \cite{LeeLin} et Lee,Lin,Yang \cite{LeeLinYang})!

\medskip

\large{ \bf Application \`a la transform\'ee de Riesz et au projecteur de Leray.}
La section \ref{sec:leray} est d\'edi\'ee \`a des applications dans le cadre des transform\'ees de Riesz, associ\'ees \`a un op\'erateur elliptique sous forme divergence $L=-\dive A\nabla$. Plus pr\'ecis\'ement, nous verrons que sous l'hypoth\`ese que cet op\'erateur g\'en\`ere un semi-groupe, dont le gradient admet des estimations Gaussiennes ponctuelles, alors la transform\'ee de Riesz $R_L:=\nabla L^{-1/2}$  a un comportement vraiment similaire \`a celui des op\'erateurs de Calder\'on-Zygmund. En particulier, nous obtiendrons les nouveaux r\'esultats suivants
\begin{itemize}
\item[$\bullet$] L'op\'erateur dual $(R_L)^*$ est de type faible $L^1$, ainsi que diff\'erentes fonctionnelles carr\'ees.
\item[$\bullet$] Le projecteur de Leray $\pi_L:=R_L (R_{L^*})^*$ est aussi de type faible $L^1$ et admet des estimations $L^p(\omega)$ \`a poids optimales si $p\in(1,\infty)$ et $H^p(\omega)$ si $p\in(0,1]$. 
\end{itemize}

\medskip

\large{ \bf Lien entre contr\^ole \'epars et oscillation.}
D'autre part, dans le cadre des fonctions de Haar, on obtient (sous-section \ref{sec:atomic-dec}) une d\'ecomposition atomique et simultan\'ement \'eparse pour une fonction (g\'en\'erique) de l'espace de Hardy $H^p$, $0<p \leq 1$. Jusqu'ici les diff\'erentes d\'ecompositions atomiques obtenues ne permettaient pas d'avoir le caract\`ere \'epars des supports des atomes. Nous l'obtenons par une construction similaire \`a celle de la domination \'eparse, en utilisant un bon temps d'arr\^et bas\'e sur les fonctions carr\'ees localis\'ees, pour prendre en compte les oscillations d'une fonction de l'espace de Hardy.

Nous finissons par la sous-section \ref{sec:oscillation}, dans laquelle on d\'ecrit une sorte d'in\'egalit\'e de polarisation, du r\'esultat de Fefferman-Stein permettant d'estimer la norme $L^2$ d'une fonction par celle de sa fonction maximale di\`ese. Ce nouveau r\'esultat repose l\`a aussi sur une gestion optimis\'ee des oscillations, \`a travers un contr\^ole \'epars.

\section{Pr\'eliminaires}

\large{ \bf Notations.}
Lors de tout ce travail, on se placera dans ${\rr R}^n$ muni de sa structure Euclidienne, mais tout pourrait \^etre \'ecrit dans un espace homog\`ene. En effet, les techniques et outils utilis\'es reposent sur la structure dyadique et le caract\`ere doublant de l'espace ambiant.

\smallskip

Avant de faire quelques rappels sur les grilles dyadiques et les collections \'eparses, fixons quelques notations, qui seront r\'eguli\`erement utilis\'ees dans l'ensemble de l'article:
pour une boule (cube ou intervalle) de l'espace Euclidien, on note $\osc_B$ l'oscillation sur $B$ d\'efinie par
$$ \osc_B(f):= \big(\aver{B} \left| f- \aver{B} f dx \right| \, dx \big).
$$
Si $I$ est un intervalle, un cube ou une boule, on note aussi 
$\chi_{I}(x) := \left(1+ \frac{d(x,I)}{\ell(I)}\right)^{-1}$ la fonction de localisation autour de $I$. \\
Les diff\'erentes preuves pr\'esent\'ees ici auront pour but de construire une collection \'eparse de cubes dyadiques, celle-ci sera obtenue via des temps d'arr\^et. Tout au long du papier, $\ii I_{Stock}$ d\'enotera la collection des intervalles (ou cubes) disponibles au moment du temps d'arr\^et.
On notera aussi ${\mathcal M}$ la fonction maximale de Hardy-Littlewood, et ${\mathcal M}_p$ denote sa version $L^p$: $\ic M_p (f)= \left(  \ic M \vert f \vert^p \right)^{1/p}$.

\medskip

\large{ \bf Grilles dyadiques et collection \'eparses.}
Sur ${\rr R}^n$, associ\'ee \`a un param\`etre $\alpha\in\{0,\frac{1}{3}\}^n$, on peut consid\'erer la grille dyadique (translat\'ee)
$$ \Dy^\alpha := \left\{2^{-k}\left([0,1]^n+m+(-1)^k\alpha\right), \ k\in {\mathbb Z},\ m\in{\mathbb Z}^n \right\}$$
et on note $\Dy:= \cup_\alpha \Dy^\alpha$.
Pour un cube dyadique $Q^\alpha \in \Dy^\alpha$, on note $\ell(Q)$ la longueur de son cot\'e. Un r\'esultat bien connu est alors que pour tout cube $Q$ de ${\rr R}^n$, il existe $\alpha\in\{0,\frac{1}{3}\}^n$ et un cube $Q^\alpha\in \Dy^\alpha$ tel que
$$ Q\subset Q^\alpha \qquad \textrm{et} \qquad \ell(Q^\alpha)\leq 6 \ell(Q).$$

\begin{definition}\label{def:sparse} Une collection ${\mathcal S} \subset \Dy$ est dite $\eta$-{\it \'eparse} pour un certain $\eta\in(0,1)$ si on peut trouver une collection d'ensembles mesurables disjoints $(E_Q)_{Q\in {\mathcal S}}$ telle que pour tout $Q\in {\mathcal S}$
$$ E_Q \subset Q \qquad \textrm{et} \qquad |E_Q|\geq \eta |Q|.$$
\end{definition}

Il existe aussi la formulation suivante \'equivalente:

\begin{definition} \label{def:sparsebis} Une collection ${\mathcal S} \subset \Dy^0$ est dite {\'eparse}, si il existe $\eta \in (0,1)$ tel que pour tout $Q \in \ic S$, 
\[
\sum_{P \in ch_{\ic S}\left( Q\right)} |P| \leq \eta |Q|,
\]
o\`u $ch_{\ic S}(Q)$ repr\'esente la collection des descendants directs de $Q$ dans $\ic S$, c'est \`a dire les intervalles maximaux de $\ic S$ strictement contenus dans $Q$.
\end{definition}

\begin{definition}\label{def:carleson} Une collection ${\mathcal S} \subset \Dy$ est dite $\Lambda$-{\it Carleson} pour un certain $\Lambda> 1$ si pour tout $Q\in {\mathcal S}$ on a
$$ \sum_{\genfrac{}{}{0pt}{}{P\in{\mathcal S}}{P\subset Q}} |P| \leq \Lambda |Q|.$$
\end{definition}

On a alors l'\'equivalence entre ces deux notions (\cite[Lemme 6.3]{LernerNazarov}):

\begin{proposition} Une collection ${\mathcal S} \subset \Dy$ est $\eta$-\'eparse si et seulement si elle est $\eta^{-1}$-Carleson.
\end{proposition}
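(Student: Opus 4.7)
La preuve se compose de deux implications, la premi\`ere \'etant imm\'ediate et la seconde demandant plus de soin.

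Pour le sens \'epars $\Rightarrow$ Carleson, l'id\'ee est de partir simplement de la famille d'ensembles majeurs disjoints $(E_Q)_{Q\in\ic S}$ fournie par la sparsit\'e. Pour $Q_0\in\ic S$ fix\'e, la disjonction des $(E_P)$ et leur inclusion dans $Q_0$ pour $P\subset Q_0$ donnent imm\'ediatement
\[
\sum_{P\in\ic S,\, P\subset Q_0} |P| \leq \eta^{-1}\sum_{P\in\ic S,\, P\subset Q_0} |E_P| \leq \eta^{-1}|Q_0|,
\]
ce qui est exactement la condition de Carleson avec $\Lambda = \eta^{-1}$.

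Pour la r\'eciproque, on suppose $\ic S$ $\Lambda$-Carleson avec $\Lambda = \eta^{-1}$ et l'on cherche \`a construire des $E_Q\subset Q$ disjoints v\'erifiant $|E_Q|=|Q|/\Lambda$. Le plan naturel est de proc\'eder par r\'ecurrence sur les cubes tri\'es par taille croissante: apr\`es r\'eduction au cas $\ic S$ finie par approximation, on \'enum\`ere $\ic S=\{Q_1,\ldots,Q_N\}$ avec $|Q_1|\leq\ldots\leq|Q_N|$, puis on choisit \`a chaque \'etape $E_{Q_i}$ dans la partie encore libre de $Q_i$. La structure dyadique assure qu'un $E_{Q_j}$ pr\'ec\'edemment construit et rencontrant $Q_i$ v\'erifie $Q_j\subsetneq Q_i$, donc est enti\`erement inclus dans $Q_i$. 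La mesure d\'ej\`a occup\'ee dans $Q_i$ est alors
\[
\Lambda^{-1}\sum_{\substack{j<i\\Q_j\subsetneq Q_i}} |Q_j| \leq \Lambda^{-1}\Big(\sum_{\substack{P\in\ic S\\P\subset Q_i}} |P| - |Q_i|\Big) \leq \Lambda^{-1}(\Lambda-1)|Q_i| = |Q_i|-|Q_i|/\Lambda,
\]
gr\^ace \`a Carleson, laissant exactement la marge $|Q_i|/\Lambda$ requise pour placer $E_{Q_i}$.

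Le principal obstacle technique sera le passage du cas fini au cas g\'en\'eral: si $\ic S$ contient des cubes arbitrairement petits, aucune \'enum\'eration croissante ne commence par les plus petits. On contournera cela soit par un argument de compacit\'e et extraction diagonale sur des sous-familles finies $\ic S_n\uparrow\ic S$ (chacune h\'eritant de la propri\'et\'e $\Lambda$-Carleson), soit en invoquant une version mesurable du th\'eor\`eme de mariage de Hall, dont l'hypoth\`ese $|\bigcup_{Q\in\ic T} Q| \geq \Lambda^{-1}\sum_{Q\in\ic T} |Q|$ pour toute sous-famille $\ic T\subset\ic S$ se d\'eduit elle aussi directement de Carleson en sommant sur les \'el\'ements maximaux de $\ic T$.
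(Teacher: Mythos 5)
Le papier ne d\'emontre pas cette proposition: il renvoie simplement \`a \cite[Lemme 6.3]{LernerNazarov}. Votre preuve est donc \`a \'evaluer sur pi\`eces, et elle est correcte dans ses grandes lignes --- c'est essentiellement l'argument de cette r\'ef\'erence. Le sens \'epars $\Rightarrow$ Carleson est exact. Pour la r\'eciproque, l'argument glouton sur une famille finie ordonn\'ee par taille croissante est juste: la mesure d\'ej\`a occup\'ee dans $Q_i$ vaut bien $\Lambda^{-1}\sum_{j<i,\,Q_j\subsetneq Q_i}|Q_j|$ (tout $E_{Q_j}$ rencontrant $Q_i$ y est enti\`erement inclus, par trichotomie dyadique), et votre majoration via la condition de Carleson laisse exactement la marge $\Lambda^{-1}|Q_i|$ requise. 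Deux r\'eserves cependant. D'une part, vos deux pistes pour le cas infini ne sont pas \'equivalentes: la voie du th\'eor\`eme de mariage mesurable (Bollob\'as--Varopoulos) est compl\`ete telle quelle, la condition $|\bigcup_{Q\in\ic T}Q|\geq\Lambda^{-1}\sum_{Q\in\ic T}|Q|$ se v\'erifiant comme vous le dites en sommant sur les \'el\'ements maximaux de $\ic T$, deux \`a deux disjoints; en revanche, la voie par compacit\'e demande un pas suppl\'ementaire que vous ne mentionnez pas, car les limites faibles-$*$ des indicatrices $\one_{E_Q^{(n)}}$ ne fournissent a priori qu'une s\'election fractionnaire $0\leq f_Q\leq \one_Q$ avec $\sum_Q f_Q\leq 1$ et $\int f_Q\geq \eta|Q|$, qu'il faut ensuite transformer en de v\'eritables ensembles disjoints. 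D'autre part, tout l'argument (trichotomie, disjonction des cubes maximaux) suppose que $\mcS$ est contenue dans une seule grille $\Dy^\alpha$; c'est la convention implicite de l'\'enonc\'e, mais il vaut la peine de le pr\'eciser, car pour des cubes issus de grilles translat\'ees diff\'erentes deux cubes qui se rencontrent peuvent n'\^etre ni embo\^it\'es ni disjoints, et l'\'equivalence avec les constantes exactes $\eta \leftrightarrow \eta^{-1}$ tombe alors en d\'efaut.
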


Cette \'equivalence nous permet donc d'utiliser simultan\'ement les propri\'et\'es 'faciles' dues \`a chacune des propri\'et\'es (voir \cite{LernerNazarov}). Dans la suite, nous appellerons une collection \'eparse, si elle est $\eta$-\'eparse avec un param\`etre $\eta\in (0,\eta_0)$ pour un certain param\`etre $\eta_0<1$ implicite.

Nous avons aussi la propri\'et\'e suivante (\cite[Sections 12-13]{LernerNazarov}):

\begin{proposition} \label{prop:enlarge_sparse} Soit ${\mathcal S}$ une collection \'eparse et $\rho$ une fonction continue croissante sur $[0,1]$ telle que $\rho(0)=0$. Supposons que $\rho$ v\'erifie
$$ \sum_{k\geq 1} k \rho(2^{-k}) <\infty.$$ 
Alors pour toute fonction $f,g\in L^1_{\loc}$, il existe une collection \'eparse enlargie $\widetilde \mcS$ telle que
$$ \sum_{k\geq 0} \rho(2^{-k}) \sum_{Q\in \mcS} \left(\aver{2^k Q} |f| dx \right)\left(\aver{2^k Q} |g| dx \right) |Q| \lesssim  \sum_{Q\in \widetilde \mcS} \left(\aver{Q} |f| dx \right)\left(\aver{Q} |g| dx \right) |Q|.$$
Les constantes implicites sont ind\'ependantes de $f,g$ et de $\mcS$ et ne d\'ependent que de $\rho$.
\end{proposition}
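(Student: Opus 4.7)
\smallskip

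\textbf{Plan de preuve.} Le plan est de ramener chaque moyenne dilat\'ee $\aver{2^k Q}$ \`a une moyenne sur un cube dyadique voisin, puis d'absorber les contributions r\'esultantes dans une famille \'eparse $\widetilde{\mcS}$ construite par un temps d'arr\^et de type Calder\'on-Zygmund sur $|f|$ et $|g|$.

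\smallskip

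\emph{\'Etape 1 : dyadisation des cubes dilat\'es.} Pour chaque $Q \in \mcS$ et $k \geq 0$, j'utiliserai la propri\'et\'e rappel\'ee juste avant la D\'efinition~\ref{def:sparse} pour choisir une grille d\'ecal\'ee $\Dy^{\alpha(Q,k)}$ et un cube $R_{Q,k} \in \Dy^{\alpha(Q,k)}$ tels que $2^k Q \subset R_{Q,k}$ et $\ell(R_{Q,k}) \leq 6 \cdot 2^k \ell(Q)$. Ceci donne imm\'ediatement $\aver{2^k Q}|h| \leq C_n \aver{R_{Q,k}}|h|$ pour toute fonction positive $h$. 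Puisqu'il n'y a qu'un nombre fini ($=2^n$) de grilles, on peut se ramener au cas o\`u tous les $R_{Q,k}$ appartiennent \`a une m\^eme grille dyadique $\Dy^\alpha$ (quitte \`a multiplier la constante finale par $2^n$).

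\smallskip

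\emph{\'Etape 2 : construction de $\widetilde{\mcS}$ par temps d'arr\^et.} Je construirai $\widetilde{\mcS}$ r\'ecursivement : partant d'un cube maximal $Q_0$, les $\widetilde{\mcS}$-enfants d'un cube $\tilde R \in \widetilde{\mcS}$ sont les cubes dyadiques maximaux $\tilde R' \subsetneq \tilde R$ sur lesquels $\aver{\tilde R'}|f| > 2 \aver{\tilde R}|f|$ ou $\aver{\tilde R'}|g| > 2 \aver{\tilde R}|g|$. L'in\'egalit\'e de Tchebychev assure que leur r\'eunion est de mesure au plus $|\tilde R|/2$, donc $\widetilde{\mcS}$ est $\tfrac{1}{2}$-\'eparse. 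De plus, en notant $\pi(R) \in \widetilde{\mcS}$ le plus petit ascendant d'un cube dyadique $R$, la construction garantit $\aver{R}|f| \leq 2\aver{\pi(R)}|f|$ et $\aver{R}|g| \leq 2\aver{\pi(R)}|g|$. En substituant, on obtient
$$ \sum_{k \geq 0} \rho(2^{-k}) \sum_{Q \in \mcS} \aver{2^k Q}|f| \aver{2^k Q}|g| |Q| \ \lesssim \ \sum_{\tilde R \in \widetilde{\mcS}} \aver{\tilde R}|f|\aver{\tilde R}|g| \cdot W(\tilde R), $$
o\`u $W(\tilde R) := \sum_{k \geq 0} \rho(2^{-k}) \sum_{Q \in \mcS,\, \pi(R_{Q,k}) = \tilde R} |Q|$. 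Le tout se ram\`ene donc \`a \'etablir $W(\tilde R) \lesssim |\tilde R|$ uniform\'ement.

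\smallskip

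\emph{\'Etape 3 : estimation cruciale de $W(\tilde R)$.} Pour $\tilde R$ et $k$ fix\'es, la condition $\pi(R_{Q,k}) = \tilde R$ impose $Q \subset R_{Q,k} \subset \tilde R$ avec $\ell(Q) \leq C 2^{-k} \ell(\tilde R)$, et interdit que $R_{Q,k}$ soit enti\`erement contenu dans un $\widetilde{\mcS}$-descendant strict de $\tilde R$. J'organiserai donc les $Q$ contributeurs selon les couches de profondeur du temps d'arr\^et : chaque couche contribue au plus $|\tilde R|$ (gr\^ace \`a la propri\'et\'e de Carleson de $\mcS$ appliqu\'ee \`a $\tilde R$), et le nombre de couches susceptibles d'\^etre visit\'ees par un $R_{Q,k}$ donn\'e est contr\^ol\'e par $O(k)$ (c'est la diff\'erence entre les \'echelles $\ell(Q) \sim 2^{-k}\ell(\tilde R)$ et $\ell(\tilde R)$). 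On en d\'eduit $\sum_{Q : \pi(R_{Q,k}) = \tilde R} |Q| \lesssim k\,|\tilde R|$, et la sommation $\sum_k k \rho(2^{-k}) < \infty$ fournit $W(\tilde R) \lesssim |\tilde R|$. C'est exactement ici que l'hypoth\`ese de sommabilit\'e renforc\'ee intervient.

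\smallskip

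\emph{Obstacle principal.} La difficult\'e centrale r\'eside dans l'\'Etape~3 : il faut comprendre pourquoi le d\'enombrement par couches produit un facteur logarithmique en $k$ (plut\^ot qu'une simple borne en $|\tilde R|$ qui ne n\'ecessiterait que $\sum \rho(2^{-k}) < \infty$). Ce d\'ecalage provient du fait que, pour un m\^eme $Q$, les diff\'erents cubes $R_{Q,k}$ traversent $O(k)$ g\'en\'erations interm\'ediaires de $\widetilde{\mcS}$ entre $Q$ et $\tilde R$, chacune \'etant susceptible d'\^etre celle qui "capte" l'\'energie. C'est cette analyse combinatoire fine des interactions entre $\mcS$ et le temps d'arr\^et qui justifie l'hypoth\`ese $\sum_k k \rho(2^{-k}) < \infty$.
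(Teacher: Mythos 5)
Le papier ne donne aucune d\'emonstration de cette proposition : il renvoie \`a \cite[Sections 12--13]{LernerNazarov}, o\`u la collection agrandie est fabriqu\'ee \`a partir des cubes eux-m\^emes, en adjoignant \`a $\mcS$ les anc\^etres dyadiques interm\'ediaires de chaque $Q$ jusqu'\`a l'\'echelle $2^k\ell(Q)$ ; la constante de Carleson de cette famille augment\'ee cro\^it lin\'eairement avec le nombre de g\'en\'erations ajout\'ees, et c'est l\`a la v\'eritable origine de l'hypoth\`ese $\sum_k k\rho(2^{-k})<\infty$. Votre route est diff\'erente : vous construisez $\widetilde \mcS$ par un temps d'arr\^et adapt\'e \`a $f$ et $g$, ce que l'\'enonc\'e autorise puisque $\widetilde \mcS$ peut d\'ependre des fonctions. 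Les \'etapes 1 et 2 sont correctes modulo deux d\'etails \`a r\'egler : le seuil $2$ ne suffit pas (avec deux fonctions, Tchebychev ne donne que $|\tilde R|/2+|\tilde R|/2=|\tilde R|$ pour la r\'eunion des enfants d'arr\^et ; il faut un seuil au moins \'egal \`a $4$), et le temps d'arr\^et n'a pas de cube de d\'epart, car $\ell(R_{Q,k})\to\infty$ quand $k\to\infty$ ; il faut donc travailler avec une sous-famille finie de $\mcS$ et $k\leq K$, obtenir des constantes uniformes, puis passer \`a la limite.

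Le vrai probl\`eme est l'\'etape 3, que vous d\'esignez vous-m\^eme comme l'obstacle principal mais que vous ne d\'emontrez pas : le d\'ecoupage ``en couches'' n'est jamais d\'efini, le d\'enombrement en $O(k)$ n'est pas \'etabli, et l'argument ``chaque couche contribue au plus $|\tilde R|$ par la propri\'et\'e de Carleson appliqu\'ee \`a $\tilde R$'' est un double comptage de la m\^eme borne. En r\'ealit\'e il n'y a rien \`a faire : puisque $Q\subseteq 2^kQ\subseteq R_{Q,k}\subseteq\pi(R_{Q,k})$, la condition $\pi(R_{Q,k})=\tilde R$ entra\^ine $Q\subseteq\tilde R$, et la propri\'et\'e d'empilement h\'er\'editaire d'une famille $\eta$-\'eparse donne, uniform\'ement en $k$,
$$\sum_{Q\in\mcS,\ \pi(R_{Q,k})=\tilde R}|Q|\ \leq\ \sum_{Q\in\mcS,\ Q\subseteq\tilde R}|Q|\ \leq\ \eta^{-1}\sum_{Q\in\mcS,\ Q\subseteq\tilde R}|E_Q|\ \leq\ \eta^{-1}|\tilde R|,$$
les ensembles $E_Q$ \'etant deux \`a deux disjoints et contenus dans $\tilde R$. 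D'o\`u $W(\tilde R)\leq\eta^{-1}\big(\sum_k\rho(2^{-k})\big)|\tilde R|$ : votre construction d\'emontre en fait l'\'enonc\'e sous la seule condition de Dini $\sum_k\rho(2^{-k})<\infty$, et le facteur logarithmique que vous vous effor\c{c}ez de justifier est un artefact de la m\'ethode de \cite{LernerNazarov}, pas de la v\^otre. Remplacez l'\'etape 3 par la ligne ci-dessus et r\'eglez les deux points techniques signal\'es, et la preuve est compl\`ete.
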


\section{La propri\'et\'e du type faible $(1,1)$ et de la bornitude $L^\infty$-$\BMO$}

\subsection{Type faible $(1,1)$ comme cons\'equence d'un contr\^ole \'epars}

\begin{proposition} \label{prop:typefaible} Soit $T$ un op\'erateur lin\'eaire born\'e sur $L^2$, v\'erifiant un contr\^ole \'epars via la forme bilin\'eaire. Plus pr\'ecis\'ement, supposons que pour toutes fonctions (compactement support\'ees) $f,g\in L^2$ il existe une collection \'eparse $\mcS$ telle que
$$ \left| \langle Tf,g \rangle \right| \lesssim \sum_{Q\in \mcS} \left(\aver{Q} |f|\, dx \right) \left(\aver{Q} |g|\, dx \right) |Q|.$$
Alors $T$ est de type faible $(1,1)$.
\end{proposition}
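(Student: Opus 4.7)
Ma strat\'egie est la m\'ethode classique de Calder\'on-Zygmund. Pour $f \in L^1 \cap L^2$ et $\lambda > 0$ fix\'es, je commence par d\'ecomposer $f$ \`a la Calder\'on-Zygmund au niveau $\lambda$: on obtient une famille disjointe de cubes dyadiques maximaux $(Q_j)_j$ v\'erifiant $\lambda < \aver{Q_j}|f| \leq 2^n \lambda$, et en posant $\Omega = \bigcup_j Q_j$ on a $|\Omega| \lesssim \lambda^{-1} \|f\|_1$. J'\'ecris ensuite $f = g + b$ de fa\c{c}on usuelle, o\`u $g = f \mathbf{1}_{\Omega^c} + \sum_j \left(\aver{Q_j} f\right) \mathbf{1}_{Q_j}$ satisfait $\|g\|_\infty \lesssim \lambda$ (et donc $\|g\|_2^2 \lesssim \lambda \|f\|_1$), tandis que $b = \sum_j b_j$ avec $b_j = (f - \aver{Q_j} f) \mathbf{1}_{Q_j}$ \`a support dans $Q_j$ et \`a moyenne nulle. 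En posant $\Omega^* = \bigcup_j 3 Q_j$ (de mesure $\lesssim \lambda^{-1} \|f\|_1$), il suffira alors de majorer $|\{x \in (\Omega^*)^c : |Tf(x)| > \lambda\}|$ par $\lambda^{-1}\|f\|_1$.

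Pour la contribution de $g$, j'observerai que la domination \'eparse entra\^ine directement la bornitude $L^2$ de $T$: en appliquant l'in\'egalit\'e \`a deux fonctions $L^2$ puis en utilisant la sparsit\'e via $\aver{Q}|\cdot| \leq \mathcal{M}(\cdot)(x)$ sur $E_Q$ et la bornitude $L^2$ de la fonction maximale, on trouve $\|Tg\|_2 \lesssim \|g\|_2$. L'in\'egalit\'e de Tchebychev fournit alors $|\{|Tg| > \lambda/2\}| \lesssim \lambda^{-2}\|g\|_2^2 \lesssim \lambda^{-1} \|f\|_1$. Pour la contribution de $b$, je passerai par la dualit\'e: en posant $A := \{x \in (\Omega^*)^c : |Tb(x)| > \lambda/2\}$ et $h := \mathrm{sgn}(Tb)\mathbf{1}_A$, on a $\lambda |A|/2 \leq |\langle Tb, h\rangle|$, et il restera donc \`a majorer $|\langle Tb, h\rangle|$ par $C \|f\|_1$. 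J'invoquerai alors l'hypoth\`ese de domination \'eparse pour obtenir une collection \'eparse $\mcS$ telle que $|\langle Tb, h\rangle| \lesssim \sum_{Q \in \mcS} \aver{Q} |b| \cdot \aver{Q} |h| \cdot |Q|$.

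L'analyse combinatoire de cette somme sera le point d\'elicat et l'obstacle principal. Seuls les cubes $Q \in \mcS$ v\'erifiant simultan\'ement $Q \cap \Omega \neq \emptyset$ (pour que $\aver{Q} |b| \neq 0$) et $Q \cap (\Omega^*)^c \neq \emptyset$ (pour que $\aver{Q} |h| \neq 0$) contribuent. Par structure dyadique, un tel $Q$ doit contenir strictement au moins un des $Q_j$, et l'estimation $\int_{Q_j} |b_j| \lesssim \lambda |Q_j|$ donne alors $\aver{Q} |b| \lesssim \lambda \cdot |Q \cap \Omega|/|Q|$. Il restera \`a sommer ces contributions de mani\`ere optimale: en exploitant le caract\`ere \'epars de $\mcS$ (via les sous-ensembles majeurs disjoints $E_Q$, ou encore la condition de Carleson \'equivalente de la Proposition pr\'ec\'edente) et la g\'eom\'etrie des cubes forc\'es de traverser le bord de $\Omega^*$, il faudra montrer que la somme est major\'ee par $C \|f\|_1$. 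Cette derni\`ere \'etape combinatoire constitue le v\'eritable c{\oe}ur de la preuve.
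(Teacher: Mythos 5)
Votre sch\'ema g\'en\'eral (d\'ecomposition de Calder\'on--Zygmund, partie r\'eguli\`ere trait\'ee par la bornitude $L^2$ d\'eduite de la domination \'eparse) est raisonnable, et la partie $g$ est correcte. Mais l'\'etape que vous qualifiez vous-m\^eme de \og v\'eritable c\oe ur de la preuve\fg{} n'est pas seulement d\'elicate: telle que vous l'avez mise en place, elle ne peut pas aboutir. En appliquant l'hypoth\`ese de domination \'eparse \`a la paire $(b,h)$, vous obtenez des moyennes $\aver{Q}|b|$ \emph{avec valeur absolue}, ce qui d\'etruit irr\'em\'ediablement l'annulation $\int b_j=0$ --- or c'est pr\'ecis\'ement cette annulation qui, dans l'argument classique, fait converger la somme sur les grandes \'echelles. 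Concr\`etement, pour un $Q_j$ fix\'e, les cubes $Q\in\mcS$ contenant strictement $Q_j$ peuvent former une cha\^ine embo\^it\'ee arbitrairement longue $Q_j\subsetneq Q^{(1)}\subsetneq Q^{(2)}\subsetneq\cdots$ (une telle cha\^ine est $\tfrac12$-\'eparse, donc rien ne l'exclut), et la contribution correspondante vaut $\sum_k \aver{Q^{(k)}}|b_j|\cdot\aver{Q^{(k)}}|h|\cdot|Q^{(k)}| = \|b_j\|_1\sum_k \aver{Q^{(k)}}|h|$, o\`u chaque terme $\aver{Q^{(k)}}|h|=|Q^{(k)}\cap A|/|Q^{(k)}|$ peut \^etre d'ordre $1$ tant que $|Q^{(k)}|\lesssim |A|$; la somme sur la cha\^ine est donc de l'ordre de $\log_+\big(|A|/|Q_j|\big)$ et n'est pas major\'ee uniform\'ement. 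C'est la manifestation du fait g\'en\'eral qu'un op\'erateur \'epars est de type faible $(1,1)$ mais pas de type fort $(1,1)$: on n'a pas $\int_A T_{\mcS}(|b|)\,dx\lesssim \|b\|_1$, et aucune exploitation suppl\'ementaire de la sparsit\'e ou de la g\'eom\'etrie du bord de $\Omega^*$ ne comblera ce d\'efaut logarithmique.

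La preuve du papier contourne l'obstacle en inversant l'ordre des op\'erations. On utilise d'abord la caract\'erisation de $L^{1,\infty}$ par les sous-ensembles majeurs: pour $E$ donn\'e on pose $E':=\{x\in E:\ M(f)(x)<K|E|^{-1}\}$ et on majore $\int_{E'}|Tf|=\sup_h|\langle Tf,h\rangle|$ sur les $h$ support\'ees dans $E'$ avec $\|h\|_\infty\le 1$. On applique la domination \'eparse \`a la paire $(f,h)$ \emph{avant} toute d\'ecomposition, ce qui donne des moyennes $\aver{Q}|f|$; on d\'ecompose ensuite la fonction positive $|f|=g+\sum_i b_i$ au niveau $K|E|^{-1}$ \emph{\`a l'int\'erieur de la forme \'eparse}, de sorte que $\aver{Q}|f|=\aver{Q}g+\sum_i\aver{Q}b_i$ sans valeur absolue sur les $b_i$. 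Chaque terme $\aver{Q}b_i$ s'annule alors: si $Q\supseteq Q_i$ ou $Q\cap Q_i=\emptyset$, c'est l'annulation de la moyenne et le support; si $Q\subsetneq Q_i$, c'est le facteur $\aver{Q}|h|$ qui est nul puisque $Q_i\subset (E')^c$. Il ne reste que la bonne partie, trait\'ee par la bornitude $L^2$ de la forme \'eparse, exactement comme dans votre estimation de $g$. Si vous voulez sauver votre r\'edaction, c'est ce renversement (dominer $\langle Tf,h\rangle$ puis d\'ecomposer $|f|$ dans la somme \'eparse, plut\^ot que d\'ecomposer $f$ puis dominer $\langle Tb,h\rangle$) qu'il faut adopter.
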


Ce r\'esultat n'est pas nouveau, et on peut trouver une preuve dans \cite[Th\'eor\`eme E]{CCDO}. Nous allons cependant donner une preuve un peu diff\'erente, qui met en lumi\`ere la connexion avec la d\'ecomposition de Calder\'on-Zygmund.

\begin{proof} Nous allons utiliser la caract\'erisation suivante de l'espace $L^{1,\infty}$: pour une fonction mesurable $\phi$ alors $\phi \in L^{1,\infty}$ si
\begin{equation} \|\phi\|_{L^{1,\infty}} \simeq \sup_{E \subset {\rr R}^n} \inf_{\genfrac{}{}{0pt}{}{E \subset E'}{2 |E'|\geq |E|}}  \int_{E'} |f(x)|\, dx. \label{eq:caracterisationL1faible} \end{equation}
On fixe donc une fonction $f\in L^1\cap L^2$, normalis\'ee dans $L^1$ ($\|f\|_1=1$) et on souhaite v\'erifier que $T(f) \in L^{1,\infty}$. Donc pour tout ensemble mesurable $E$ de mesure finie, on consid\`ere l'ensemble $E'$ d\'efini par
$$ E':=\left\{ x\in E, \ M(f)(x)<K|E|^{-1} \right\},$$
pour une constante $K$ num\'erique suffisamment grande. Cet ensemble est majeur dans $E$ et v\'erifie $2|E'|\geq |E|$. Il nous reste alors \`a estimer
\begin{equation} \int_{E'} |Tf(x)|\, dx = \sup_{h} \left| \langle Tf, h \rangle \right| \label{eq:sup}
\end{equation}
o\`u le supremum est pris sur toutes les fonctions $h\in L^2$ support\'ee sur $E'$ avec $\|h\|_\infty\leq 1$.
Fixons une telle fonction $h$ et utilisons l'hypoth\`ese de contr\^ole \'epars. Il existe donc une collection \'eparse $\mcS$ telle que
$$ \left| \langle Tf, h \rangle \right| \lesssim  \sum_{Q\in \mcS} \left(\aver{Q} |f|\, dx \right) \left(\aver{Q} |h|\, dx \right) |Q|.$$
Supposons que toute la collection \'eparse soit incluse dans une des grilles dyadiques: il existe $\alpha$ tel que $\mcS \subset \Dy^\alpha$ (ce qu'il est toujours possible de supposer). 
Utilisons maintenant une d\'ecomposition de Calder\'on-Zygmund de $|f|\in L^1$ au niveau $K|E|^{-1}$: il existe une 'bonne' fonction $g$, des cubes $Q_i$, et des 'mauvaises' fonctions $b_i$ telles que 
$$ |f| = g + \sum_i b_i, \quad \|g\|_\infty \lesssim |E|^{-1}, \quad \|g\|_1\lesssim 1$$
et $b_i$ est support\'e dans $Q_i$ avec une int\'egrale nulle. En fait, les cubes $Q_i$ peuvent \^etre choisis comme un recouvrement maximal dyadique de l'ensemble de niveau $\{x,\ M(f)(x)> K|E|^{-1} \}$ de sorte que tous les cubes $Q_i$ sont inclus dans $(E')^c$. 
En utilisant la bornitude $L^2$ des formes \'eparses, on obtient que
\begin{align*}
 \sum_{Q\in \mcS} \left(\aver{Q} g\, dx \right) \left(\aver{Q} |h|\, dx \right) |Q| & \lesssim \int {\mathcal M}(|g|) {\mathcal M}(|h|) \, dx \\
 & \lesssim \|g\|_2 \|h\|_2 \lesssim \left(\|g\|_\infty \|g\|_1\right)^{1/2} \|h\|_\infty |E'|^{1/2} \\
 & \lesssim 1.
\end{align*}
Pour chaque cube $Q_i$ fix\'e, puisque $b_i$ est d'int\'egrale nulle et est support\'ee sur $Q_i$, la structure dyadique entra\^ine:
$$  \sum_{Q\in \mcS} \left(\aver{Q} b_i\, dx \right) \left(\aver{Q} |h|\, dx \right) |Q| =  \sum_{\genfrac{}{}{0pt}{}{Q\in \mcS}{Q\subset Q_i}} \left(\aver{Q} g\, dx \right) \left(\aver{Q} |h|\, dx \right) |Q| = 0$$
car $h$ est suppos\'ee \^etre support\'ee sur $E'$ qui ne rencontre pas $Q_i$.
\`A la fin,  on obtient que
$$ \left| \langle Tf, h \rangle \right| \lesssim 1$$
uniform\'ement en $h$, ce qui entra\^ine (par \eqref{eq:sup}) 
$$ \int_{E'} |Tf(x)|\, dx \lesssim 1$$ et donc conclut la preuve de $\|Tf\|_{L^{1,\infty}} \lesssim 1$.
\end{proof}

\begin{remark}
\begin{itemize}
\item[$\bullet$] L'hypoth\`ese de la bornitude $L^2$ de l'op\'erateur n'est pas importante, c'est juste pour donner un cadre et pouvoir \'evaluer $T(f)$, pour $f\in L^2$.

\item[$\bullet$] On observe donc que l'\'evaluation du type faible $L^1$ de l'op\'erateur $T$, correspond exactement \`a l'\'evaluation de l'op\'erateur sur la 'bonne' partie de la d\'ecomposition de Calder\'on-Zygmund.
\end{itemize}
\end{remark}

Le contr\^ole d'un op\'erateur lin\'eaire, par une forme bilin\'eaire \'eparse permet d'obtenir un contr\^ole par une forme auto-adjointe (alors que l'op\'erateur consid\'er\'e initialement ne l'est pas forc\'ement), qui implique le type faible $L^1$. On en d\'eduit donc le corollaire suivant:

\begin{corollary} \label{coro:typefaible} 
Soit $T$ un op\'erateur lin\'eaire, tel que $T$ admet un contr\^ole par une forme bilin\'eaire \'eparse. Alors l'op\'erateur dual $T^*$ est de type faible $L^1$.
\end{corollary}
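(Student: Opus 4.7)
L'observation centrale est que la forme bilin\'eaire \'eparse qui apparait au membre de droite de l'hypoth\`ese de contr\^ole \'epars,
$$ (f,g) \mapsto \sum_{Q\in \mcS} \left(\aver{Q} |f|\, dx\right)\left(\aver{Q} |g|\, dx\right)|Q|, $$
est manifestement sym\'etrique en $f$ et $g$ (m\^eme si la collection \'eparse $\mcS$ utilis\'ee d\'epend \'eventuellement du couple $(f,g)$). C'est cette sym\'etrie qui est responsable du ``caract\`ere auto-adjoint'' mentionn\'e juste avant l'\'enonc\'e, et c'est elle qui va transf\'erer automatiquement la propri\'et\'e de contr\^ole \'epars \`a l'adjoint $T^*$.

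Concr\`etement, le plan est le suivant. Premi\`erement, je pars de la d\'efinition de l'adjoint: pour toutes fonctions $f,g\in L^2$ \`a support compact, on a $\langle T^* f,g\rangle = \langle f, T g\rangle$, donc $|\langle T^* f,g\rangle| = |\langle T g, f\rangle|$. Deuxi\`emement, j'applique l'hypoth\`ese de contr\^ole \'epars de $T$ au couple $(g,f)$ (avec les r\^oles \'echang\'es): il existe alors une collection \'eparse $\widetilde \mcS = \widetilde \mcS_{g,f}$ telle que
$$ |\langle T g, f\rangle| \lesssim \sum_{Q\in \widetilde \mcS} \left(\aver{Q}|g|\, dx\right)\left(\aver{Q}|f|\, dx\right)|Q|. $$
Par sym\'etrie du membre de droite, ceci est pr\'ecis\'ement un contr\^ole \'epars de la forme bilin\'eaire associ\'ee \`a $T^*$. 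Troisi\`emement, il suffit alors d'appliquer la Proposition \ref{prop:typefaible} \`a $T^*$ pour conclure que $T^*$ est de type faible $(1,1)$.

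Il n'y a pas v\'eritablement d'obstacle technique: la preuve est une cons\'equence imm\'ediate de la sym\'etrie $f \leftrightarrow g$ du contr\^ole \'epars et de la proposition pr\'ec\'edente. Le seul point \`a justifier, si l'on veut \^etre pr\'ecis, est que $T^*$ est bien d\'efini et admet une \'evaluation sur $L^2$ pour pouvoir appliquer la Proposition \ref{prop:typefaible}; mais la remarque qui suit celle-ci souligne d\'ej\`a que la bornitude $L^2$ n'est qu'un cadre commode et non une hypoth\`ese cruciale, et en pratique on obtiendra $T^*$ born\'e sur $L^2$ d\`es que $T$ l'est. C'est pr\'ecis\'ement la force de la philosophie d'un contr\^ole par une forme sym\'etrique: une propri\'et\'e qui par d\'efinition concerne uniquement le comportement sur les ``bonnes'' fonctions de la d\'ecomposition de Calder\'on-Zygmund (comme le montre la preuve de la Proposition \ref{prop:typefaible}) devient automatiquement valable pour l'adjoint, alors m\^eme que $T^*$ n'est en g\'en\'eral pas un op\'erateur que l'on sait contr\^oler directement.
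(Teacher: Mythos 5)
Votre preuve est correcte et suit exactement l'argument voulu par le papier : le paragraphe pr\'ec\'edant le corollaire indique pr\'ecis\'ement que le contr\^ole par une forme bilin\'eaire \'eparse est \emph{auto-adjoint}, de sorte qu'en \'echangeant les r\^oles de $f$ et $g$ on obtient le m\^eme contr\^ole \'epars pour la forme bilin\'eaire de $T^*$, puis on conclut par la Proposition \ref{prop:typefaible}. Rien \`a redire.
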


\subsection{Lien entre la propri\'et\'e \'eparse d'une collection et $BMO$} 


\begin{proposition} Soit $(I)_I$ une collection d'intervalles dyadiques et $\phi:= \sum_{I} \tilde h_I$, o\`u $\tilde h_I$ sont les fonctions de Haar $L^\infty$-normalis\'ees. Alors la collection est \'eparse si et seulement si $\phi \in BMO$
\end{proposition}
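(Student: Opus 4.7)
Mon plan repose sur un calcul direct de l'oscillation de $\phi$ sur chaque intervalle dyadique $J$, en tirant parti de la compatibilit\'e entre la structure dyadique de $\mcS$ et celle de la base de Haar. L'observation cruciale est que, par la structure dyadique, toute fonction de Haar $\tilde h_I$ (avec $I$ dyadique) est soit constante sur $J$ (dans le cas $I \supsetneq J$), soit support\'ee dans $J$ et de moyenne nulle (dans le cas $I \subseteq J$), soit identiquement nulle sur $J$ (dans le cas $I \cap J = \emptyset$). Notant $c_J$ la somme (formelle) des valeurs constantes prises sur $J$ par les $\tilde h_I$ pour $I \in \mcS$ avec $I \supsetneq J$, on obtient sur $J$ la d\'ecomposition
$$ \phi - c_J = \sum_{\substack{I \in \mcS \\ I \subseteq J}} \tilde h_I, \qquad \text{avec} \quad \aver{J} \phi\, dx = c_J.$$
L'\'etude de l'oscillation de $\phi$ sur $J$ se ram\`ene donc enti\`erement \`a celle de la sous-somme index\'ee par $\{I \in \mcS : I \subseteq J\}$.

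Pour le sens direct (\'eparse $\Rightarrow \BMO$), je passe par la formulation Carleson (\'equivalente d'apr\`es la Proposition rappel\'ee). En utilisant $|\tilde h_I| \leq \one_I$ et une simple in\'egalit\'e triangulaire, l'oscillation $L^1$ est imm\'ediatement major\'ee par $|J|^{-1} \sum_{I \in \mcS, I \subseteq J} |I| \leq \Lambda$, ce qui fournit l'appartenance de $\phi$ au $\BMO$ dyadique, le passage au $\BMO$ usuel \'etant obtenu par l'argument standard des grilles translat\'ees $\Dy^\alpha$ d\'ecrit dans la Section des pr\'eliminaires. Pour la r\'eciproque ($\BMO \Rightarrow$ \'eparse), je travaille plut\^ot avec l'oscillation $L^2$: l'orthogonalit\'e deux \`a deux des Haar $(\tilde h_I)$ et l'identit\'e $\|\tilde h_I\|_2^2 = |I|$ donnent alors l'\emph{\'egalit\'e exacte}
$$ \aver{J} |\phi - c_J|^2 \, dx = \frac{1}{|J|} \sum_{\substack{I \in \mcS \\ I \subseteq J}} |I|.$$
Le th\'eor\`eme de John-Nirenberg ($\|\phi\|_{\BMO_2} \lesssim \|\phi\|_{\BMO}$) transforme alors cette \'egalit\'e en la condition de Carleson recherch\'ee, avec constante $\lesssim \|\phi\|_{\BMO}^2$.

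Le principal obstacle technique reste de donner un sens rigoureux \`a la somme formelle $\phi = \sum_I \tilde h_I$ lorsque la collection $\mcS$ est infinie: je proc\'ederais en travaillant d'abord avec des sommes partielles $\phi_{\mcS'}$ sur des sous-collections finies $\mcS' \subset \mcS$, o\`u $\phi_{\mcS'}$ est clairement une vraie fonction, puis en \'etablissant les bornes pr\'ec\'edentes uniform\'ement en $\mcS'$, et enfin en passant \`a la limite dans $\BMO$ modulo les constantes (convergence faible). Un second point \`a soigner, plus cosm\'etique, est la distinction entre $\BMO$ dyadique et $\BMO$ usuel, naturelle ici car la construction de $\phi$ est intrins\`equement dyadique, mais le passage de l'un \`a l'autre est standard via les grilles $\Dy^\alpha$.
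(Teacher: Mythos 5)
Votre preuve est correcte et repose, au fond, sur la m\^eme identit\'e que celle du papier: puisque $\langle \phi, h_I\rangle = |I|^{1/2}$ pour tout $I$ de la collection, la condition de Carleson quadratique sur les coefficients de Haar de $\phi$ est litt\'eralement la condition de Carleson d'ordre $1$ qui caract\'erise le caract\`ere \'epars. La diff\'erence est que le papier invoque cette caract\'erisation de $\BMO$ comme une bo\^ite noire (Stein pour les mesures de Carleson et fonctions carr\'ees, Meyer pour les ondelettes), tandis que vous la red\'emontrez \`a la main dans ce cas particulier: in\'egalit\'e triangulaire sur l'oscillation $L^1$ pour le sens \'epars $\Rightarrow \BMO$, Parseval plus John--Nirenberg pour la r\'eciproque. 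Votre version est auto-contenue, et vous avez raison de soulever le probl\`eme de convergence pour une collection infinie, que le papier passe sous silence: par exemple $\mcS=\{[0,2^k]\}_{k\geq 0}$ est \'eparse mais les sommes partielles divergent ponctuellement, donc la somme n'a de sens que modulo les constantes. Deux r\'eserves mineures. D'abord, l'appel aux grilles translat\'ees $\Dy^\alpha$ pour passer du $\BMO$ dyadique au $\BMO$ usuel ne fonctionne pas tel quel: le ``one-third trick'' exige un contr\^ole dans chaque grille translat\'ee, alors que $\phi$ n'est contr\^ol\'ee que dans la grille standard; l'argument direct correct est que pour un intervalle quelconque $J$, au plus deux intervalles dyadiques $I$ avec $|I|\gg |J|$ ont $\tilde h_I$ non constante sur $J$ (chacun contribuant $O(1)$ \`a l'oscillation), les autres termes vivant dans un dilat\'e born\'e de $J$ o\`u la condition de Carleson s'applique --- cela dit, l'\'enonc\'e se lit le plus naturellement avec le $\BMO$ dyadique. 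Ensuite, dans le sens direct vous utilisez la borne de Carleson pour tout $J$ dyadique alors que la d\'efinition ne la postule que pour $J\in\mcS$; l'extension est imm\'ediate en sommant sur les \'el\'ements maximaux de $\mcS$ contenus dans $J$, mais m\'erite une phrase.
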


\begin{proof}
Cette proposition est une cons\'equence de la caract\'erisation des fonctions de $BMO$ par des mesures de Carleson et fonctions carr\'ees (\cite[Th\'eor\`eme 3 page 159]{SteinBigBook}). Ou, encore plus direct, on peut utiliser la caract\'erisation de $BMO$ avec des ondelettes, comme d\'ecrite dans \cite{Yves-ondelettes}: ``les coefficients d'ondelettes d'une fonction de $BMO$ satisfont les c\'el\`ebres conditions quadratiques de Carleson''. 

En effet, si on emploie les ondelettes de Haar, la condition \'eparse, qui traduit une condition d'ordre $1$ de Carleson (definition \ref{def:carleson}), devient \'equivalente \`a l'appartenance de certaines fonctions $\phi$ \`a $BMO$. Pour une telle fonction $\phi$, on doit avoir
\[
\vert \langle \phi, h_I \rangle \vert ^2 = \vert I \vert, \quad \text{ pourt tout $I$ dans la collection \'eparse}.
\]
On peut donc prendre $\phi=\sum_{I \in \ic S} \epsilon_I \tilde h_I$, pour toute suite $(\epsilon_I)_I = \pm 1$.

\end{proof}



\section{La propri\'et\'e ``$T(1)=0$'' et application pour la composition d'op\'erateurs} \label{sec:T1=0}

On rappelle qu'un op\'erateur lin\'eaire born\'e sur $L^2$ est dit de Calder\'on-Zygmund si il est associ\'e \`a un noyau $K:{\rr R}^n \times {\rr R}^n \rightarrow {\rr R}$ tel que pour un certain $\epsilon>0$ et tout $x\neq y \in{\rr R}^n$ alors
$$ |K(x,y)| \lesssim \frac{1}{|x-y|^{n}}$$
et pour tout $z\in {\rr R}^n$ avec $2|z|<|x-y|$
$$  |K(x+z,y)-K(x,y)|  + |K(x,y+z)-K(x,y)| \lesssim \frac{|z|^\epsilon}{|x-y|^{n+\epsilon}}.$$
Il est maintenant classique que si $T$ est un op\'erateur lin\'eaire de Calder\'on-Zygmund alors on peut d\'efinir $T(1)$ dans BMO (et plus g\'en\'eralement montrer que $T$ admet une extension continue de $L^\infty$ dans BMO). L'espace BMO \'etant d\'efini modulo les constantes, la condition $T(1)=0\in BMO$ signifie plus exactement que $T(1)$ est une fonction constante.

\begin{proposition} \label{prop:question1} Soit $T$ un op\'erateur de Calder\'on-Zygmund v\'erifiant $T(1)=0 \in BMO$. Alors pour toute fonction $f\in L^2$ il existe une collection \'eparse $\mcS$ telle que, pour presque tout $x\in {\rr R}^n$ 
$$ \left|T(f)(x)\right| \lesssim \sum_{Q\in \mcS} \osc_Q(f) {\bf 1}_Q(x).$$
\end{proposition}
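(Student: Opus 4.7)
La stratégie est de combiner la formule de décomposition de Lerner, appliquée à la fonction $Tf$, avec l'hypothèse $T(1)=0$ pour remplacer les moyennes $L^1$ de $|f|$ (qui apparaissent dans la domination éparse usuelle d'un opérateur de Calderón-Zygmund) par des oscillations de $f$. Précisément, par la formule médiane de Lerner appliquée à $Tf$ et après passage à la limite $|Q_0|\to\infty$ (la médiane $m_{Q_0}(Tf)\to 0$ pour $f\in L^2$), il existe une collection éparse $\mcS_0$ telle que, pour presque tout $x\in {\rr R}^n$,
$$ |Tf(x)| \lesssim \sum_{Q\in \mcS_0}\omega_\lambda(Tf;Q)\,\mathbf{1}_Q(x),$$
où $\omega_\lambda(Tf;Q)$ désigne l'oscillation locale quasi-$L^{1,\infty}$ de $Tf$ sur $Q$ (l'infimum sur $c$ du $\lambda$-quantile de $|Tf-c|$ restreint à $Q$). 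Il suffit alors d'établir une majoration de $\omega_\lambda(Tf;Q)$ par les oscillations $L^1$ de $f$.

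Fixons $Q$ et notons $c_Q := \aver{3Q} f$. L'hypothèse $T(1)=0$ (i.e.\ $T$ de la constante $c_Q$ est nulle) autorise la décomposition
$$ Tf = T\bigl((f-c_Q)\mathbf{1}_{3Q}\bigr) + T\bigl((f-c_Q)\mathbf{1}_{(3Q)^c}\bigr).$$
Pour le terme local, la bornitude $L^1\to L^{1,\infty}$ de $T$ (conséquence du caractère Calderón-Zygmund) donne
$$ \bigl|\{x\in Q: |T((f-c_Q)\mathbf{1}_{3Q})(x)|>\Lambda\,\osc_{3Q}(f)\}\bigr| \lesssim |Q|/\Lambda,$$
ce qui assure que sa contribution à $\omega_\lambda(Tf;Q)$ est $O(\osc_{3Q}(f))$. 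Pour le terme au loin, la régularité höldérienne du noyau donne, pour $x,x'\in Q$,
$$ \bigl|T((f-c_Q)\mathbf{1}_{(3Q)^c})(x) - T((f-c_Q)\mathbf{1}_{(3Q)^c})(x')\bigr| \lesssim \sum_{k\geq 1} 2^{-k\epsilon}\,\aver{3^{k+1}Q}|f-c_Q|,$$
et le télescopage $\aver{3^{k+1}Q}|f-c_Q|\lesssim \sum_{j=1}^{k+1}\osc_{3^j Q}(f)$ (conséquence de l'estimation élémentaire $|c_{3^{j+1}Q}-c_{3^jQ}|\lesssim \osc_{3^{j+1}Q}(f)$) permet d'obtenir
$$ \omega_\lambda(Tf;Q) \lesssim \sum_{k\geq 0}(k+1)\,2^{-k\epsilon}\,\osc_{3^{k+1}Q}(f).$$

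Il reste à contracter les dilatées $3^{k+1}Q$ en une collection éparse unique, ce qui est fourni par une variante adaptée aux oscillations de la Proposition~\ref{prop:enlarge_sparse}, appliquée avec le poids $\rho(2^{-k}) = (k+1)2^{-k\epsilon}$ vérifiant bien $\sum_k k\rho(2^{-k})<\infty$. On obtient ainsi une collection éparse élargie $\widetilde{\mcS}$ telle que
$$ |Tf(x)| \lesssim \sum_{Q\in \widetilde{\mcS}}\osc_Q(f)\,\mathbf{1}_Q(x),$$
ce qui est l'énoncé cherché. Le principal obstacle technique est le contrôle du terme au loin: si l'hypothèse $T(1)=0$ permet effectivement de remplacer $|f|$ par $|f-c_Q|$ dans l'estimation de type Calderón-Zygmund, il faut ensuite gérer soigneusement l'accumulation des changements de constante $c_{3Q}, c_{3^2 Q}, \ldots$ au fil des échelles, ce qui produit le facteur logarithmique $(k+1)$; celui-ci est néanmoins absorbé par la décroissance géométrique $2^{-k\epsilon}$ grâce au mécanisme d'élargissement éparse de la Proposition~\ref{prop:enlarge_sparse}.
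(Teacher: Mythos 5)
Votre preuve est correcte et suit essentiellement la m\^eme strat\'egie que celle du papier : formule de Lerner sur les oscillations locales en moyenne appliqu\'ee \`a $Tf$, utilisation de $T(1)=0$ pour remplacer les moyennes de $|f|$ par des oscillations, t\'elescopage des constantes $\aver{3^jQ}f$ \`a travers les \'echelles, puis contraction des dilat\'es en une seule collection \'eparse via la variante ``oscillations'' du m\'ecanisme d'\'elargissement de la Proposition~\ref{prop:enlarge_sparse}. La seule diff\'erence est cosm\'etique : vous red\'emontrez la majoration de $\omega_\lambda(Tf;Q)$ par une d\'ecomposition proche/lointaine de $(f-c_Q)$ (type faible $(1,1)$ plus r\'egularit\'e h\"old\'erienne du noyau), l\`a o\`u le papier cite l'estimation classique $\omega_{\lambda}(Tf;Q)\lesssim \sum_{\ell}2^{-\ell\delta}\aver{2^\ell Q}|f|$ puis soustrait la moyenne en invoquant l'invariance de $\omega_\lambda$ par les constantes.
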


L'oscillation $\osc_Q(f)$ est plus pr\'ecise que la moyenne et encode exactement le caract\`ere $T(1)=0$. On obtient donc un contr\^ole \'epars, en pr\'eservant cette propri\'et\'e \`a l'\'echelle \'el\'ementaire de la repr\'esentation.

\begin{proof} Il n'est pas clair comment utiliser la propri\'et\'e $T(1)=0$ \`a travers les plus r\'ecentes preuves de Lacey \cite{LaceyA_2} ou Lerner \cite{Lerner4}, qui utilisent un 'bon' op\'erateur maximal tronqu\'e. Nous pr\'ef\'erons donc utiliser la preuve initiale de Lerner \cite{Lerner-simplerA_2} qui repose sur les oscillations locales en moyenne. Rappelons tout d'abord sa d\'efinition: pour $\phi$ une fonction mesurable sur ${\rr R}^n$ et une boule $B$, l'oscillation locale en moyenne de $\phi$ sur $B$ au niveau $\lambda\in(0,1)$ est d\'efinie par
$$ \omega_\lambda(\phi;B):= \inf_{c\in {\rr R}} \left((\phi-c)1_B\right)^*(\lambda|B|)$$
o\`u $^*$ correspond au r\'earrangement d\'ecroissant. Fixons alors le niveau $\lambda$ suffisamment petit, en utilisant la formule de Lerner (see \cite{Lerner1}) et \cite[Theorem 10.2]{LernerNazarov}, pour toute fonction $f\in L^2$ (de sorte que $Tf$ a aussi un sens dans $L^2$) il existe une collection \'eparse $\mcS=(Q)_Q$ de cubes telle que pour presque tout $x$, nous avons
$$ \left|Tf(x)\right| \lesssim \sum_{Q\in \mcS} \omega_{\lambda}(Tf;Q) {\bf 1}_Q(x).$$
De plus, $T$ \'etant un op\'erateur de Calder\'on-Zygmund, on sait que pour un certain $\delta:=\delta(n,\epsilon)$
$$ \omega_{\lambda}(Tf;Q) \lesssim \sum_{\ell \geq 0} 2^{-\ell \delta} \left(\aver{2^\ell Q} |f|\, d\mu\right).$$
\'Etant donn\'e que $T(1)=0\in BMO$ et que l'oscillation est invariante par les constantes, on peut soustraire la moyenne et obtenir
$$ \omega_{\lambda}(Tf;Q) \lesssim \sum_{\ell \geq 0} 2^{-\ell \delta} \left(\aver{2^\ell Q} \left|f - \aver{Q}f d\mu \right|\, d\mu\right).$$
Par cons\'equent, il vient
\begin{align*}
\left|Tf(x)\right| & \lesssim \sum_{\ell \geq 0} \sum_{Q\in \mcS}  2^{-\ell \delta} \left(\aver{2^\ell Q} \left|f - \aver{Q}f d\mu \right|\, d\mu\right) {\bf 1}_Q(x).
\end{align*}
En utilisant que 
$$ \left(\aver{2^\ell Q} \left|f - \aver{Q}f d\mu \right|\, d\mu\right) \lesssim \sum_{k=0}^\ell \osc_{2^k Q}(f)$$
on en d\'eduit
\begin{align*}
\left|Tf(x)\right| & \lesssim \sum_{\ell \geq 0} \sum_{Q\in \mcS}  \sum_{k=0}^\ell 2^{-\ell \delta} \osc_{2^k Q}(f)  {\bf 1}_Q(x) \lesssim \sum_{k\geq 0} \sum_{Q\in \mcS}  2^{-k \delta} \osc_{2^k Q}(f) {\bf 1}_Q(x).
\end{align*}
Il nous suffit maintenant d'estimer la double somme par une somme sur une seule collection \'eparse. Cette \'etape est expliqu\'ee dans \cite[Sections 12-13]{LernerNazarov}, en utilisant un temps d'arr\^et suppl\'ementaire. Il repose uniquement sur le fait que pour une collection disjointe de boules $B_1,...,B_N$ incluses dans $B$ alors
\begin{align*}  
\sum_{i=1}^N |B_i| \osc_{B_i}(f) & \leq  2 \sum_{i=1}^N |B_i| \left(\aver{B_i} \left|f - \aver{B}f d\mu \right|\, d\mu\right) \\
& \leq 2 \osc_{B}(f).
\end{align*}
Cette propri\'et\'e est utilis\'ee pour v\'erifier que la collection agrandie est bien toujours \'eparse. Donc en appliquant cet argument, il existe une collection \'eparse agrandie
$\widetilde \mcS$ telle que
\begin{align*}
\left|Tf(x)\right| \lesssim  \sum_{Q\in \widetilde \mcS}  \osc_{Q}(f) {\bf 1}_Q(x).
\end{align*}
\end{proof}

Nous allons maintenant \'etudier la composition de deux op\'erateurs de Calder\'on-Zygmund, $S$ et $T$. Par composition des estimations \`a poids, nous savons que pour tout $p\in(1,\infty)$ et tout poids $\omega \in A_p$ alors
$$ \|T\circ S \|_{L^p_\omega \to L^p_\omega} \lesssim [\omega]_{A_p}^{2\max\{1, (p-1)^{-1}\}}.$$
Nous allons voir comment ces estimations peuvent \^etre am\'elior\'ees en fonction d'une condition du type $T(1)$ et/ou $S^* (1)=0$. Tout d'abord, nous observons que pour am\'eliorer cette estimation, nous avons besoin de comprendre profond\'ement la composition en utilisant que les deux op\'erateurs $T$ et $S$ ont des ``annulations'' qui doivent int\'eragir. Plus pr\'ecis\'ement, pour la composition $T \circ S$, les interactions entre $T$ et $S^*$ semblent \^etre les plus importantes et il est donc naturel de s'int\'eresser aux conditions $T(1)=0$ et $S^* (1)=0$. Ceci a d\'ej\`a \'et\'e observ\'e dans \cite[Section 9]{CoifMeyer-ondelettes} pour montrer que la composition $T\circ S$ est encore un op\'erateur de Calder\'on-Zygmund sous la condition $T(1)=0=S^*(1)$.

\begin{itemize}
 \item[$\bullet$] Si $T,S$ sont deux op\'erateurs de Calder\'on-Zygmund, alors ils sont tous les deux control\'es ponctuellement par un op\'erateur \'epars. Il est alors facile d'en d\'eduire que pour tout exposant $r\in(1,\infty)$ nous avons le contr\^ole suivant: pour toutes fonctions $f,g\in L^2$ il existe une collection \'eparse $\mcS$ telle que
 $$ \left|\langle T\circ S f, g \rangle \right|= \left|\langle  S f, T^*g \rangle \right| \lesssim \sum_{Q\in \mcS} \left(\aver{Q} |f|^r\, dx \right)^{1/r} \left(\aver{Q} |g|^{r}\, dx \right)^{1/r} |Q|.$$
 En utilisant \cite[Proposition 6.4]{BernicotFreyPetermichl}, on obtient que pour tout $p\in (1,\infty)$, tout exposant $r>1$ tel que $r<p<r'$ et tout poids $\omega \in A_{p/r} \cap RH_{(r'/p)'}$ alors
 $$  \|T\circ S \|_{L^p_\omega \to L^p_\omega} \lesssim \left([\omega]_{A_{p/r}} [\omega]_{RH_{(r'/p)'}}\right)^{\alpha},$$
 avec $\alpha:=\max\{\frac{1}{p-r}, \frac{r'-1}{r'-p}\}$.
On peut donc am\'eliorer l'exposant sur la caract\'eristique du poids, si on s'autorise \`a perdre un peu sur la classe du poids.

\item[$\bullet$] Dans l'autre cas extr\^eme o\`u on suppose une double condition $T(1)=0$ et $S^*(1)=0$ alors nous savons de \cite[Section 9]{CoifMeyer-ondelettes} qu'il existe suffisamment d'interaction de sorte que $T\circ S$ est encore un op\'erateur de Calder\'on-Zygmund et est donc born\'e ponctuellement par un op\'erateur \'epars avec des moyennes $L^1$.

 \item[$\bullet$] Inter\'essons nous maintenant au cas interm\'ediaire o\`u nous supposons seulement que $T$ v\'erifie la condition $T(1)=0$. Alors en utilisant la proposition pr\'ec\'edente avec le fait que pour tout cube $Q$ et tout exposant $r\in(1,\infty)$ 
 $$ \osc_Q(Sf) \lesssim \left(\aver{Q} |f|^{r}\right)^{1/r} + \inf_{Q} M[f]$$
on peut d\'eduire que $T\circ S$ admet un contr\^ole ponctuel par un op\'erateur \'epars avec des moyennes $L^r$. Cette observation permet ainsi d'am\'eliorer le contr\^ole de la forme bilin\'eaire de la fa\c{c}on suivante:
 $$ \left|\langle T\circ S f, g \rangle \right|= \left|\langle  S f, T^*g \rangle \right| \lesssim \sum_{Q\in \mcS} \left(\aver{Q} |f|^r\, dx \right)^{1/r} \left(\aver{Q} |g|\, dx \right) |Q|,$$
 o\`u l'exposant d'int\'egrabilit\'e des moyennes de $g$ est maintenant \'egal \`a $1$. Selon \cite{BernicotFreyPetermichl} pour les estimations \`a poids obtenues par une telle forme bilin\'eaire, nous obtenons que l'on peut supprimer la condition de H\"older inverse $RH$, dans la classe de poids consid\'er\'ee: pour tout exposant $r\in(1,p)$ et tout poids $\omega \in A_{p/r}$ alors
 $$  \|T\circ S \|_{L^p_\omega \to L^p_\omega} \lesssim \left([\omega]_{A_{p/r}}\right)^{\alpha},$$
 avec $\alpha:=\max\{\frac{1}{p-r}, 1\}$.
Par sym\'etrie, si $S^*(1)=0$ alors l'exposant d'int\'egrabilit\'e des moyennes de $f$ peut \^etre choisi \'egal \`a 1.
\end{itemize}

\section{Le contr\^ole par le gradient et estimations optimales dans des espaces de Sobolev \`a poids} \label{sec:gradient}

\begin{proposition} 
Soit $T$ un op\'erateur de Calder\'on-Zygmund avec un noyau $K$ v\'erifiant $|\nabla_{x,y} K(x,y)| \lesssim |x-y|^{-n-1}$ et $|\nabla^2_{x,y} K(x,y)| \lesssim |x-y|^{-n-2}$. Supposons de plus que
\begin{itemize}
\item[$\bullet$] $T(1)=0$ dans BMO
\item[$\bullet$] L'op\'erateur $\nabla T$ v\'erifie l'estimation $L^2$ suivante:
$$  \| \nabla T(f)\|_2 \lesssim \|\nabla f\|_2, \qquad  \forall f\in W^{1,2}.$$
\end{itemize}
Alors pour toute fonction $f\in W^{1,2}$, il existe une collection \'eparse $\mcS=(Q)_{Q\in \mcS}$ telle que
$$ |\nabla Tf(x)| \lesssim \sum_{Q} \left(\aver{Q} \left|\nabla f\right| \right) {\bf 1}_Q(x).$$
\end{proposition}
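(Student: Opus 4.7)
La strat\'egie consiste \`a adapter la preuve de la Proposition \ref{prop:question1} \`a l'op\'erateur $\nabla T$, en exploitant la condition $T(1)=0$ pour soustraire les moyennes locales, puis l'in\'egalit\'e de Poincar\'e pour faire appara\^itre $|\nabla f|$ au second membre. La formule de Lerner fournit une collection \'eparse $\mcS$ telle que
$$|\nabla T f(x)| \lesssim \sum_{Q \in \mcS} \omega_\lambda(\nabla T f; Q)\, \mathbf{1}_Q(x).$$
Fixons $Q$, posons $\bar f := \aver{3Q} f$, et soit $\eta$ une coupure lisse \'egale \`a $1$ sur $2Q$, support\'ee dans $3Q$, avec $|\nabla \eta| \lesssim \ell(Q)^{-1}$. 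Puisque $T(1)=0$ entra\^ine $\nabla T(\bar f) = 0$, on d\'ecompose $\nabla T f = \nabla T f_1 + \nabla T f_2$, avec $f_1 := (f-\bar f)\eta$ (partie locale) et $f_2 := (f-\bar f)(1-\eta)$ (partie non-locale).

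Pour la partie non-locale, $f_2$ s'annule sur $2Q$; gr\^ace \`a la r\'egularit\'e $|\nabla^2_{x,y} K| \lesssim |x-y|^{-n-2}$, en d\'ecoupant en couronnes $2^{k+1}Q \setminus 2^k Q$ et en utilisant le t\'el\'escopage classique de Poincar\'e $\aver{2^k Q}|f - \bar f| \lesssim \sum_{j \leq k} 2^j \ell(Q) \aver{2^j Q}|\nabla f|$, il vient
$$\omega_\lambda(\nabla T f_2; Q) \lesssim \sum_{k \geq 0} 2^{-k}\, \aver{2^k Q}|\nabla f|.$$
Pour la partie locale, l'hypoth\`ese $L^2$ sur $\nabla T$ et Poincar\'e fournissent $\|\nabla T f_1\|_2 \lesssim \|\nabla f_1\|_2 \lesssim \|\nabla f\|_{L^2(3Q)}$, d'o\`u, par Tchebychev,
$$\omega_\lambda(\nabla T f_1; Q) \lesssim \left(\aver{3Q}|\nabla f|^2\right)^{1/2}.$$

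L'obstacle principal est de d\'egrader cette moyenne $L^2$ en une moyenne $L^1$. On l'aborde en observant que, gr\^ace \`a $T(1)=0$ et \`a l'identit\'e fondamentale $f(y) - f(x) = \int_0^1 \nabla f(x + t(y-x))\cdot(y-x)\, dt$, on peut r\'e\'ecrire, apr\`es un changement de variables $z=x+t(y-x)$, la repr\'esentation $\nabla T f(x) = \int \tilde K(x, z)\, \nabla f(z)\, dz$, o\`u le noyau matriciel $\tilde K$ a pour coefficients
$$\tilde K_{ij}(x, z) = \int_0^1 \partial_{x_i} K\bigl(x,\, x+(z-x)/t\bigr)\, \frac{z_j - x_j}{t^{n+1}}\, dt$$
et v\'erifie les estimations classiques de Calder\'on-Zygmund: $|\tilde K(x,z)| \lesssim |x-z|^{-n}$, avec r\'egularit\'e h\'erit\'ee des bornes sur $|\nabla^2 K|$. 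L'op\'erateur vectoriel $\tilde T$ associ\'e satisfait $\tilde T(\nabla f) = \nabla T f$; l'hypoth\`ese $L^2$ donne donc sa bornitude $L^2$ sur le sous-espace des gradients, que l'on \'etend \`a tout champ de vecteurs $L^2$ par une d\'ecomposition de Helmholtz. Ainsi $\tilde T$ est un op\'erateur de Calder\'on-Zygmund vectoriel, donc de type faible $(1,1)$ par la th\'eorie classique. Cette propri\'et\'e permet de remplacer l'estimation locale par $\omega_\lambda(\nabla T f_1; Q) \lesssim \aver{3Q} |\nabla f|$, et la Proposition \ref{prop:enlarge_sparse} permet finalement d'absorber la somme sur les dilatations $2^k Q$ dans une seule collection \'eparse agrandie. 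Le point technique le plus d\'elicat est cette derni\`ere \'etape: la v\'erification soigneuse de la r\'egularit\'e CZ de $\tilde K$ et la justification de l'extension $L^2$ de $\tilde T$ \`a tout champ de vecteurs.
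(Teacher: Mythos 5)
Votre architecture globale est la bonne et co\"incide avec celle du papier: formule de Lerner sur les oscillations locales, d\'ecoupage $f-\bar f = f_1+f_2$ par une coupure lisse, traitement de la partie non-locale par la r\'egularit\'e d'ordre deux du noyau et le t\'el\'escopage de Poincar\'e. Vous avez aussi correctement identifi\'e l'obstacle central: ramener la moyenne $L^2$ de $|\nabla f|$ issue de l'hypoth\`ese de bornitude $L^2$ \`a une moyenne $L^1$, ce qui revient \`a \'etablir l'estimation de type faible $\|\nabla T f\|_{L^{1,\infty}} \lesssim \|\nabla f\|_{L^1}$.

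C'est pr\'ecis\'ement l\`a que votre argument pr\'esente une lacune r\'eelle. Vous r\'e\'ecrivez $\nabla T f = \tilde T(\nabla f)$ avec un noyau vectoriel $\tilde K$ de taille $|x-z|^{-n}$ (le calcul formel et l'estimation de taille sont corrects), puis vous affirmez que la bornitude $L^2$ de $\tilde T$ sur le sous-espace des gradients \emph{s'\'etend \`a tout champ de vecteurs $L^2$ par d\'ecomposition de Helmholtz}. Cette \'etape ne tient pas: la d\'ecomposition de Helmholtz d\'ecompose l'argument, elle ne prolonge pas une borne connue sur les gradients aux champs \`a divergence nulle, sur lesquels $\tilde T$ n'est contr\^ol\'e par aucune hypoth\`ese. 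Or la th\'eorie classique de Calder\'on-Zygmund exige la bornitude $L^2$ sur \emph{tout} $L^2({\rr R}^n;{\rr R}^n)$ pour conclure au type faible $(1,1)$: la bonne partie de la d\'ecomposition de Calder\'on-Zygmund d'un champ de vecteurs $L^1$ g\'en\'erique n'est pas un gradient, donc on ne peut pas lui appliquer l'hypoth\`ese. Le papier contourne exactement ce point en utilisant la d\'ecomposition de Calder\'on-Zygmund \emph{pour les fonctions de Sobolev} d'Auscher: on d\'ecompose $f\in W^{1,1}$ (et non $\nabla f$ comme champ de vecteurs abstrait) en $f=g+\sum_i b_i$ avec $\|\nabla g\|_\infty\lesssim\alpha$ et $b_i\in W^{1,1}_0(Q_i)$; la bonne partie $g$ est alors une vraie fonction de Sobolev \`a laquelle l'hypoth\`ese $\|\nabla T g\|_2\lesssim\|\nabla g\|_2$ s'applique directement, et les mauvaises parties se traitent par la r\'egularit\'e du noyau, la condition $T(1)=0$ et Poincar\'e, via la fonction de Marcinkiewicz. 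Une fois l'\'etape 1 ainsi \'etablie, la suite de votre plan (contr\^ole de $\omega_\lambda(\nabla T f_1;Q)$ par $\aver{3Q}|\nabla f|$ via le type faible et Poincar\'e, puis absorption des dilatations par la collection \'eparse agrandie) est correcte et identique \`a celle du papier.
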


\begin{remark} On observe que:
\begin{itemize}
\item[$\bullet$] Une r\'egularit\'e d'ordre $1+\epsilon$ sur le noyau est en fait suffisante.
\item[$\bullet$] Cette propri\'et\'e traduit une certaine commutativit\'e entre la domination \'eparse et le gradient dans le cas des op\'erateurs de Calder\'on-Zygmund suffisamment r\'eguliers.
\item[$\bullet$] La preuve que nous allons d\'etailler reprend la preuve de Lerner \cite{Lerner-simplerA_2}, bas\'ee sur une repr\'esentation avec des oscillations locales. Nous la d\'etaillons, pour voir comment y int\'egrer le gradient.
\end{itemize}
\end{remark}

\begin{proof}
{\bf \'Etape 1:} Nous allons d'abord v\'erifier que l'op\'erateur $\nabla T$ v\'erifie une estimation de type faible $L^1$, plus pr\'ecis\'ement:
\begin{equation} \| \nabla T f \|_{L^{1,\infty}} \lesssim \|\nabla f\|_{L^1}. \label{eq:etape1}
 \end{equation}
Pour cela, nous allons utiliser une d\'ecomposition de Calder\'on-Zygmund pour les fonctions ``gradient'', d\'evelopp\'ee par Auscher dans \cite{Auscher-memoire} (voir aussi \cite{Auscherannexe} pour une explication suppl\'ementaire). La preuve n'est pas originale et a \'et\'e souvent utilis\'ee dans le cadre de diff\'erentes transform\'ees de Riesz. Pour \^etre complet, nous d\'etaillons la preuve dans ce contexte. Fixons donc une fonction $f\in W^{1,2}$ telle que $|\nabla f|\in L^1$ ainsi qu'un niveau $\alpha>0$. Nous savons qu'il existe une ``bonne'' fonction $g$ et une collection de cubes $Q_i$ v\'erifiant la propri\'et\'e de recouvrement born\'e ainsi que des fonctions $b_i\in W^{1,1}_0(Q_i)$ support\'ees dans $Q_i$ telles que
$$ \|\nabla g \|_{\infty} \lesssim \alpha, \qquad \|\nabla g\|_1 \lesssim \|\nabla f \|_1,$$
$$ \sum_i |Q_i| \lesssim \alpha^{-1} \|\nabla f\|_1   \qquad \textrm{et} \qquad \aver{Q_i} |\nabla b_i|\, dz \lesssim \alpha.$$
On souhaite maintenant estimer 
$$ \left| \{x,\ |\nabla Tf|>\alpha\} \right| \leq \left| \{x,\ |\nabla Tg|>\alpha/2\} \right| + \left| \{x,\ |\nabla Tb|>\alpha/2\} \right|,$$
avec $b=\sum b_i$. Pour le premier ensemble de niveau, on utilise l'hypoth\`ese $L^2$ pour d\'eduire que
$$ \left| \{x,\ |\nabla Tg|>\alpha/2\} \right| \lesssim \alpha^{-2} \|\nabla Tg\|_2^2 \lesssim \alpha^{-2} \|\nabla g\|_2^2 \lesssim \alpha^{-1} \|\nabla f\|_1.$$
Pour le second ensemble de niveau, \'etant donn\'e que $\cup_i (2Q_i)$ a une mesure controll\'ee par $\alpha^{-1} \|\nabla f\|_1 $, nous avons seulement \`a estimer
$$ \left| \{x \in \cap_i (2Q_i)^c,\  |\nabla Tb (x)|>\alpha/2\} \right|.$$
Pour chaque indice $i$ et un point $x\in (2Q_i)^c$, on note $c_i$ (resp. $\ell_i$) le centre (resp. la longueur du cot\'e) de $Q_i$ et alors par la condition $T(1)=0\in BMO$, on a
\begin{align*} 
\nabla T(b_i)(x) & = \nabla T \left(b_i - \aver{Q_i} b_i \right)(x) \\
 & = \nabla T \left[\left(b_i - \aver{Q_i} b_i \right)\chi_i \right](x) + \left(\aver{Q_i} b_i\right) \nabla T(1-\chi_i)(x) 
\end{align*}
o\`u $\chi_i$ est une fonction Lipschitz, \'egale \`a $1$ sur $Q_i$, support\'ee sur $2Q_i$ et avec un gradient uniform\'ement born\'e par $\ell_i^{-1}$.
Par l'estimation de r\'egularit\'e sur le noyau (puisque $d(x,Q_i)\simeq |x-c_i|$), on obtient
\begin{align*}
\left| \nabla T \left(b_i - \aver{Q_i} b_i \right)(x) \right| & \lesssim \int_{Q_i} \left|b_i(y) - \aver{Q_i} b_i \right| \frac{1}{|x-c_i|^{n+1}} dy \\
& \lesssim \left(\frac{\ell_i}{|x-c_i|}\right)^{n+1} \ell_i^{-1}\osc_{Q_i}(b_i) \\
& \lesssim \left(\frac{\ell_i}{|x-c_i|}\right)^{n+1} \left(\aver{Q_i} |\nabla b_i| \, dz \right) \lesssim \left(\frac{\ell_i}{|x-c_i|}\right)^{n+1} \alpha,
\end{align*}
o\`u nous avons utilis\'e que $b_i$ est support\'e sur $Q_i$ et l'in\'egalit\'e de Poincar\'e $L^1$. De mani\`ere similaire, nous obtenons
\begin{align*}
\left| \left(\aver{Q_i} b_i\right) \nabla T(1-\chi_i)(x) \right| & \lesssim \ell_i \left(\aver{Q_i} |\nabla b_i| \, dz \right) \left|\nabla T(\chi_i)(x)\right| \\
& \lesssim \ell_i \left(\aver{Q_i} |\nabla b_i| \, dz \right) \frac{\ell_i^n}{|x-r_i|^{n+1}} \\
& \lesssim \left(\frac{\ell_i}{|x-c_i|}\right)^{n+1} \alpha.
\end{align*}
En cons\'equence, pour tout $x \in \cap_i (2Q_i)^c$ nous avons montr\'e que
 $$ |\nabla Tb(x)| \lesssim \alpha \sum_i  \left(\frac{\ell_i}{\ell_i + |x-c_i|}\right)^{n+1}, $$
 ce qui est exactement la fonction de Marcinkiewicz associ\'ee \`a la collection $(Q_i)$, qui v\'erifie une estimation de type faible $L^1$ puisque la collection v\'erifie la propri\'et\'e de recouvrement born\'e. On conclut donc que
$$\left| \left\{x,  \sum_i \sum_i  \left(\frac{\ell_i}{\ell_i + |x-c_i|}\right)^{n+1} \gtrsim 1\right\}\right| \lesssim \sum_i |Q_i| \lesssim \alpha^{-1}\|\nabla f\|_1,$$
 ce qui finit la preuve de $ \left| \{x,\ |\nabla Tf|>\alpha\} \right|\lesssim \alpha^{-1} \|\nabla f\|_1.$
 
 \smallskip
 {\bf \'Etape 2:} Nous allons maintenant obtenir le contr\^ole \'epars de l'op\'erateur $\nabla T$. En utilisant la formule de Lerner (sur les oscillations locales) (on renvoie le lecteur \`a la preuve de la Proposition \ref{prop:question1} pour les notations), \cite{Lerner1} et \cite[Theorem 10.2]{LernerNazarov}: pour toute fonction $f\in W^{1,2}$ (de sorte que $\nabla Tf$ a aussi un sens dans $L^2$) il existe une collection \'eparse $\mcS=(Q)_Q$ de cubes telle que pour presque tout $x$, on a avec un certain param\`etre $\lambda$
$$ \left|\nabla Tf(x)\right| \lesssim \sum_{Q\in \mcS} \omega_{\lambda}(\nabla Tf;Q) {\bf 1}_Q(x).$$
Fixons une des boules $Q$, et une fonction Lipschitz $\chi_Q$, \'egale \`a $1$ sur $2Q$, support\'ee sur $4Q$ et avec un gradient uniform\'ement born\'e par $\ell(Q)^{-1}$. Alors puisque $T(1)=0\in BMO$, on a $ \nabla T(f) = \nabla T(f_1)+ \nabla T(f_2)$ avec
$$ f_1:=\left(f- \aver{Q} f \right) \chi_Q \qquad \textrm{et} \qquad  f_2:=\left(f- \aver{Q} f \right) \left(1-\chi_Q\right).$$
En utilisant l'in\'egalit\'e \eqref{eq:etape1} sur la premi\`ere fonction $f_1$ avec l'in\'egalit\'e de Poincar\'e $L^1$, on obtient que
\begin{align*}
\omega_{\lambda}(\nabla Tf_1;Q) & \lesssim \frac{1}{|Q|} \left\| \left(f- \aver{Q} f \right) \chi_Q \right\|_{L^1} \\
 & \lesssim \frac{1}{\ell(Q)} \osc_{4Q}(f) + \left(\aver{4Q} |\nabla f| \, dz \right) \\
 & \lesssim \left(\aver{4Q} |\nabla f| \, dz \right).
 \end{align*}
Pour la seconde partie, on utilise $c$ le centre du cube $Q$, pour avoir
\begin{align*}
\omega_\lambda(\nabla Tf_2;Q) & \lesssim \| \nabla Tf_2 - \nabla Tf_2(c)\|_{L^\infty(Q)}.
\end{align*}
Pour tout point $x\in Q$, on v\'erifie en utilisant la r\'egularit\'e du noyau $K$ que
\begin{align*}
\left|\nabla Tf_2(x) - \nabla Tf_2(c)\right| & \lesssim \int_{y\in (2Q)^c} \left|\nabla K(x,y)-\nabla K(c,y)\right| |f(y)- f_Q| \, dy \\
& \lesssim \sum_{j\geq 0} 2^{-j} \frac{1}{2^j \ell(Q)} \left(\aver{2^j Q} \left|f-f_Q\right| \, dy \right) \\
& \lesssim \sum_{j\geq 0} \sum_{k=0}^j 2^{-j} \left( \aver{2^k Q} |\nabla f| \, dy \right) \\
& \lesssim \sum_{k\geq 0} 2^{-k} \left( \aver{2^k Q} |\nabla f| \, dy \right),
\end{align*}
o\`u nous avons utilis\'e l'in\'egalit\'e de Poincar\'e $L^1$ et la notation $f_Q$ pour la moyenne de $f$ sur la boule $Q$.
On conclut donc que 
$$ \omega_{\lambda}(\nabla Tf;Q) \lesssim \sum_{k\geq 0} 2^{-k} \left( \aver{2^k Q} |\nabla f| \, dy \right),$$
d'o\`u
$$ \left|\nabla Tf(x)\right| \lesssim \sum_{k\geq 0} 2^{-k} \sum_{Q\in \mcS}  \left( \aver{2^k Q} |\nabla f| \, dy \right) {\bf 1}_Q(x).$$
On obtient le r\'esultat \'enonc\'e en utilisant la m\'ethode de \cite[Sections 12-13]{LernerNazarov}, qui permet d'estimer la double somme par une seule somme sur une collection \'eparse.
\end{proof}

\begin{corollary} Soit $T$ un op\'erateur comme dans la proposition pr\'ec\'edente. Alors l'op\'erateur $T$ v\'erifie des estimations optimales dans des espaces de Sobolev \`a poids: pour tout $p\in (1,\infty)$ et tout poids $\omega\in A_p$ alors $T$ est continu sur $W^{1,p}(\omega)$ et
$$ \| T\|_{W^{1,p}(\omega) \to W^{1,p}(\omega)} \lesssim [\omega]_{A_p}^{\max\{1,(p-1)^{-1}\}}.$$

\end{corollary}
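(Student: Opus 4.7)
Le plan est de s\'eparer la norme de Sobolev en $\|Tf\|_{W^{1,p}(\omega)} \simeq \|Tf\|_{L^p(\omega)} + \|\nabla Tf\|_{L^p(\omega)}$ et de contr\^oler chacun des deux termes via le principe d\'esormais classique selon lequel un op\'erateur \'epars satisfait des estimations $L^p(\omega)$ optimales.

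Pour le premier terme, $T$ \'etant par hypoth\`ese un op\'erateur de Calder\'on-Zygmund, le th\'eor\`eme $A_2$ (Lerner \cite{Lerner-simplerA_2}, Lacey \cite{LaceyA_2}), qui r\'esulte de la domination \'eparse ponctuelle de $T$ par des moyennes $L^1$, fournit imm\'ediatement
$$\|Tf\|_{L^p(\omega)} \lesssim [\omega]_{A_p}^{\max\{1,(p-1)^{-1}\}} \|f\|_{L^p(\omega)}.$$

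Pour le terme du gradient, j'appliquerai la proposition pr\'ec\'edente: pour $f\in W^{1,2}$ il existe une collection \'eparse $\mcS$ (d\'ependant de $f$) telle que
$$|\nabla Tf(x)| \lesssim \sum_{Q\in \mcS}\left(\aver{Q}|\nabla f|\right){\bf 1}_Q(x) = T_{\mcS}(|\nabla f|)(x).$$
Il suffit ensuite d'invoquer le r\'esultat fondamental de Cruz-Uribe, Martell, P\'erez et Lerner \cite{Lernerfirst, CUMP} rappel\'e dans l'introduction, qui garantit que tout op\'erateur \'epars v\'erifie
$$\|T_{\mcS}\|_{L^p(\omega)\to L^p(\omega)} \lesssim [\omega]_{A_p}^{\max\{1,(p-1)^{-1}\}},$$
uniform\'ement en $\mcS$. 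En appliquant cette in\'egalit\'e \`a la fonction positive $g:=|\nabla f|\in L^p(\omega)$, on obtient
$$\|\nabla Tf\|_{L^p(\omega)} \lesssim [\omega]_{A_p}^{\max\{1,(p-1)^{-1}\}} \|\nabla f\|_{L^p(\omega)},$$
et la combinaison des deux estimations donne le r\'esultat annonc\'e.

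Un point technique mineur est que la proposition pr\'ec\'edente est formul\'ee pour $f\in W^{1,2}$ alors que l'on souhaite obtenir l'estimation sur $W^{1,p}(\omega)$: ceci se r\`egle par densit\'e, en observant que $C^\infty_c \subset W^{1,2}\cap W^{1,p}(\omega)$ est dense dans $W^{1,p}(\omega)$ pour $\omega\in A_p$, et que la domination \'eparse passe \`a la limite. Il n'y a donc pas de v\'eritable obstacle: tout le contenu analytique est concentr\'e dans la domination \'eparse ponctuelle de $\nabla T$ \'etablie dans la proposition pr\'ec\'edente, et le corollaire est une cons\'equence formelle de la machinerie standard reliant contr\^ole \'epars et estimations \`a poids optimales.
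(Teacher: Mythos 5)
Votre démonstration est correcte et suit exactement le chemin que le papier laisse implicite (le corollaire y est énoncé sans preuve) : estimation de $\|Tf\|_{L^p(\omega)}$ par le théorème $A_2$, puis estimation de $\|\nabla Tf\|_{L^p(\omega)}$ en combinant la domination éparse ponctuelle de $\nabla T$ de la proposition précédente avec la borne pondérée optimale, uniforme en $\mcS$, des opérateurs épars. Le point de densité que vous signalez est bien le seul détail technique restant et votre traitement en est adéquat.
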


\section{Repr\'esentation de multiplicateurs de Fourier g\'en\'eralis\'es, applications \`a la bornitude dans des espaces de Hardy \`a poids}  \label{sec:haar}

Dans cette section, nous allons nous int\'eresser au contr\^ole \'epars d'un op\'erateur, admettant une certaine repr\'esentation (g\'en\'eralisant le cas des multiplicateurs de Fourier).

Pour simplifier, nous allons travailler en dimension 1 avec les intervalles dyadiques $\Dy^0$ de ${\mathbb R}$. Tous les r\'esultats s'\'etendent de fa\c{c}on automatique au cas de la dimension plus grande avec les cubes dyadiques.

Plus pr\'ecis\'ement, on consid\`ere un op\'erateur lin\'eaire $T$ born\'e sur $L^2$ tel qu'il existe des coefficients $a_I(f)$, index\'es par les intervalles dyadiques $I \in{\mathbb D}^0$, et tel que, pour toutes fonctions $f,g\in L^2$
$$ |\langle Tf,g\rangle| \lesssim \sum_{I\in {\mathbb D}^0} \vert a_I(f)\vert \cdot \vert a_I(g)\vert.$$

\begin{remark} On pourrait supposer une d\'ecomposition sur l'ensemble des intervalles dyadiques ${\mathbb D}$. Une telle repr\'esentation g\'en\'eralise la d\'ecomposition en ondelettes d'un multiplicateur de Fourier, et permet de consid\'erer d'autres types d'op\'erateurs (voir section \ref{sec:leray}). \\
De plus on pourrait consid\'erer deux types de coefficients $a_I(f)$ et $b_I(g)$, en supposant les m\^emes hypoth\`eses sur les deux collections de coefficients.
\end{remark}

Les coefficients $a_I(f)$ doivent \^etre consid\'er\'es comme une version \'el\'ementaire localis\'ee de l'op\'erateur $T$ en espace (autour de l'intervalle $I$) et en fr\'equence (\`a l'\'echelle $\ell(I)^{-1}$). L'exemple canonique dans cette situation est donn\'e par $a_I(f)=\langle f, \phi_I \rangle$, o\`u $\phi_I$ est un paquet d'onde: $\phi_I$ d\'ecroit tr\`es vite loin de $I$ et $\hat{\phi}_I$ est support\'e dans $\big[ 1/\ell(I), 2/\ell(I)  \big]$.

En suivant la terminologie de \cite{MTTBiest0} ou la pr\'esentation de \cite{multilinear_harmonic}: pour un intervalle $I_0$, on d\'efinit le '{\it size} maximal' par
$$ \size_{I_0}f := \sup_{I\subset I_0} \Big(\frac{1}{|I|} \sum_{J\subset I} |a_J(f)|^2\Big)^{1/2}.$$
Nous allons faire les deux hypoth\`eses suivantes: 
\begin{itemize}
 \item[$\bullet$] Condition d' \'energie: Pour tout intervalle $I_0$
 \begin{equation} \sum_{I\subset I_0} |a_I(f)|^2 \lesssim  \|f \cdot\chi_{I_0}^M\|_{2}^2 \label{eq:energie} \end{equation}
 o\`u $M$ est un entier assez grand.
 \item[$\bullet$] Condition de Carleson sur le ``size'': pour tout intervalle $I_0$
 \begin{equation}
  \size_{I_0}f \ \lesssim \ \sup_{I\subset I_0} \ \Big(\frac{1}{|I|} \|f \cdot \chi_{I}^M\|_1\Big). \label{pr:size}
 \end{equation}
\end{itemize}

\begin{remark}
Si $a_I(f)=\vert \langle f, \phi_I \rangle\vert$, la condition \eqref{eq:energie} est une cons\'equence de la bornitude $L^2$ de la fonction carr\'ee 
\begin{equation}
\label{def:square-function}
S_{I_0}(f)(x):=\Big(  \sum_{I \subseteq I_0} \vert a_I(f) \vert^2 \frac{\one_I(x)}{\vert I \vert} \Big)^{1/2}.
\end{equation}
Aussi, la condition sur le ``size'' est une cons\'equence du th\'eor\`eme de John-Nirenberg, car le ``size'' se comporte comme la norme $BMO$. Cela 
devient plus \'evident dans le cas $a_I(f)=\langle f, h_I \rangle$ (o\`u $(h_I)_I$ est la base de Haar): on a alors
\[
\sum_{\substack{I \subseteq I_0 \\ I \in \Dy^0}} \vert a_I(f) \vert^2= \sum_{\substack{I \subseteq I_0 \\ I \in \Dy^0}} \vert \langle f, h_I \rangle \vert^2 =\big\| \big( f - \aver{I_0} f \big) \cdot \one_{I_0} \big\|_2^2
\]
et par le th\'eor\`eme de John-Nirenberg,
\[
\sup_{I \subseteq I_0} \frac{1}{\vert I \vert^{1/2}} \big\| \big( f - \aver{I} f \big) \cdot \one_{I} \big\|_2 \sim \sup_{I \subseteq I_0} \frac{1}{\vert I \vert} \big\| \big( f - \aver{I} f \big) \cdot \one_{I} \big\|_1.
\]
\end{remark}

Nous allons tout d'abord obtenir un contr\^ole \'epars classique de l'op\'erateur $T$, par la forme bilin\'eaire, ceci permettant d'en d\'eduire des estimations \`a poids $L^p(\omega)$ et le type faible $L^1$ de l'op\'erateur $T$ et de son dual $T^*$. Puis dans un second temps, nous allons montrer comment on peut conserver la structure 'fr\'equentielle' (d\'ecomposition en terme de coefficients localis\'es en espace et en fr\'equence) \`a travers un contr\^ole \'epars. Ceci nous permettra alors d'en d\'eduire et de quantifier de mani\`ere pr\'ecise des estimations dans des espaces de Hardy \`a poids.

\subsection{Domination \'eparse et estimation dans des espaces $L^p$ \`a poids pour $1<p<\infty$} \label{subsec:p>1}

\begin{theorem} \label{thm:p>1} Sous les hypoth\`eses \eqref{eq:energie} et \eqref{pr:size}, pour toutes fonctions $f,g\in L^2$ il existe une collection \'eparse $\mcS$ telle que
$$ \left| \langle Tf, g \rangle \right| \lesssim \sum_{Q\in \mcS} \left(\aver{Q} |f| \, dx \right) \left(\aver{Q} |g| \, dx \right) |Q|.$$ 
Les op\'erateurs $T$ et $T^*$ sont de type faible $L^1$ et v\'erifient pour $p\in(1,\infty)$
$$ \| T\|_{L^p(\omega) \to L^p(\omega)} + \| T^*\|_{L^p(\omega) \to L^p(\omega)} \lesssim [\omega]_{A_p}^{\max\{1,(p-1)^{-1}\}}.$$
\end{theorem}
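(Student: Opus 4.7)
Le plan est de r\'eduire le th\'eor\`eme \`a l'\'etablissement de la seule domination \'eparse annonc\'ee, les autres conclusions en d\'ecoulant automatiquement: le type faible $(1,1)$ de $T$ suivra de la Proposition \ref{prop:typefaible} appliqu\'ee \`a la forme bilin\'eaire \'eparse, celui de $T^*$ du Corollaire \ref{coro:typefaible}, et les estimations $L^p(\omega)$ avec exposant optimal $\max\{1,(p-1)^{-1}\}$ sont le contenu classique du th\'eor\`eme $A_2$ pour les formes \'eparses. Tout le travail porte donc sur la construction de $\mcS$ pour des fonctions $f,g \in L^2$ fix\'ees.

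On proc\'edera par un temps d'arr\^et standard, bas\'e sur les moyennes $L^1$ de $f$ et $g$. Partant d'un cube dyadique $Q_0$ contenant les supports (on sommera ensuite sur les grilles translat\'ees), on d\'efinira r\'ecursivement $\mathrm{ch}_{\mcS}(Q)$ comme la collection maximale des intervalles dyadiques $Q' \subsetneq Q$ v\'erifiant
\[
\aver{Q'}|f|\,dx > C \aver{Q}|f|\,dx \quad \text{ou}\quad \aver{Q'}|g|\,dx > C \aver{Q}|g|\,dx,
\]
avec $C$ assez grand. Par bornitude faible $L^1$ de la fonction maximale, $\sum_{Q' \in \mathrm{ch}(Q)}|Q'| \leq \eta|Q|$ pour un $\eta<1$, ce qui assure le caract\`ere \'epars de $\mcS$ (D\'efinition \ref{def:sparsebis}). \`A chaque $Q \in \mcS$ on associera son arbre $\mathcal{T}(Q) := \{I \in \Dy^0 : I \subset Q,\ I \not\subset Q' \text{ pour tout } Q' \in \mathrm{ch}_{\mcS}(Q)\}$, et les $\mathcal{T}(Q)$ partitionnent $\Dy^0$, ce qui permet de d\'ecomposer $\sum_{I \in \Dy^0} |a_I(f)||a_I(g)|$ en contributions d'arbre.

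Le c{\oe}ur de la preuve, et son principal obstacle, sera l'estim\'ee d'arbre
\[
\sum_{I \in \mathcal{T}(Q)} |a_I(f)|\cdot |a_I(g)| \lesssim |Q| \cdot \aver{Q}|f|\,dx \cdot \aver{Q}|g|\,dx
\]
pour chaque $Q \in \mcS$. Pour y parvenir, on remplacera $f,g$ par leurs versions tronqu\'ees par le temps d'arr\^et, par exemple $f_Q := f\one_Q - \sum_{Q' \in \mathrm{ch}(Q)} (\aver{Q'}f) \one_{Q'}$, de sorte que sur les intervalles d'arbre, les coefficients $a_I$ ne voient plus les grandes moyennes cach\'ees dans les descendants. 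Par Cauchy-Schwarz, on majore alors la somme par
\[
\Bigl(\sum_{I \in \mathcal{T}(Q)} |a_I(f_Q)|^2\Bigr)^{1/2} \Bigl(\sum_{I \in \mathcal{T}(Q)} |a_I(g_Q)|^2\Bigr)^{1/2},
\]
et chaque facteur est estim\'e par $|Q|^{1/2} \size_Q(f_Q)$ via l'\'energie \eqref{eq:energie}; la condition de Carleson sur le size \eqref{pr:size} conduit enfin \`a $\size_Q(f_Q) \lesssim \sup_{I \subset Q} \aver{I} |f_Q \chi_I^M|\,dx \lesssim \aver{Q}|f|\,dx$ gr\^ace \`a la construction du temps d'arr\^et.

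Les facteurs de queue issus des fonctions de localisation $\chi_I^M$ font appara\^itre des moyennes sur les dilat\'es $2^k Q$, que l'on absorbera en rempla\c{c}ant $\mcS$ par une collection \'eparse l\'eg\`erement agrandie, via la Proposition \ref{prop:enlarge_sparse}. La sommation des contributions d'arbre sur $Q \in \mcS$ conclut alors la domination \'eparse, et par cons\'equent le th\'eor\`eme.
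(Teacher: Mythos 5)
Your overall architecture is the same as the paper's: reduce everything to the sparse domination (the weak type of $T$ and $T^*$ then follow from Proposition \ref{prop:typefaible} and Corollaire \ref{coro:typefaible}, the weighted bounds from the standard estimate for sparse forms together with Proposition \ref{prop:enlarge_sparse} to absorb the tails), build $\mcS$ by a stopping time on $L^1$ averages of $f$ and $g$, and prove a tree estimate by Cauchy--Schwarz plus \eqref{eq:energie} and \eqref{pr:size}. This is exactly the route of Proposition \ref{prop:sparse-Hilbert-trans}. The problem lies in your execution of the tree estimate, where there are two intertwined gaps.

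First, the substitution of $f$ by the truncated function $f_Q := f\one_Q - \sum_{Q'}(\aver{Q'}f)\one_{Q'}$, with the implicit claim that $a_I(f)=a_I(f_Q)$ for $I$ in the tree, presupposes that the coefficients are linear in $f$ and annihilate the indicators $\one_{Q'}$. Nothing of the sort is assumed: the only hypotheses on the abstract coefficients $a_I(f)$ are \eqref{eq:energie} and \eqref{pr:size}, and even in the model case $a_I(f)=\langle f,\phi_I\rangle$ the pairing $\langle \one_{Q'},\phi_I\rangle$ need not vanish when $Q'\subsetneq I$. Second, and more seriously, even granting the truncation, the final inequality $\sup_{I\subset Q}\aver{I}|f_Q\,\chi_I^M|\,dx \lesssim \aver{Q}|f|\,dx$ is false: the supremum in \eqref{pr:size} runs over \emph{all} $I\subset Q$, including intervals deep inside a stopping child $Q'$, where $f_Q=f-\aver{Q'}f$ still has arbitrarily large local $L^1$ averages (subtracting a constant does not tame a spike of $|f|$). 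The stopping time only controls averages over intervals \emph{not} contained in any child, so it cannot justify that step. The paper circumvents both issues by never truncating: it keeps the square function restricted to the selected sub-collection $\ii I_Q$ and invokes a John--Nirenberg inequality for \emph{restricted} square functions (Proposition \ref{prop:local-Hilbert}, via \cite{multilinear_harmonic}), so that the resulting size is a supremum over the selected intervals only --- precisely those on which the stopping condition gives the bound by $\frac{1}{|Q|}\int|f|\,\ci_Q^M\,dx$. To repair your argument you would need this restricted John--Nirenberg statement in place of the global condition \eqref{pr:size} as you use it.
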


Ce th\'eor\`eme est la cons\'equence de la Proposition \ref{prop:sparse-Hilbert-trans} ci-dessous, ainsi que du Corollaire \ref{coro:typefaible} avec la Proposition \ref{prop:enlarge_sparse}. 

On consid\`ere un mod\`ele de l'op\'erateur $T$, pour lequel la forme bilin\'eaire est donn\'ee par
\[
\Lambda_\ii I (f, g):=\sum_{I \in \ii I } \epsilon_I a_I(f) a_I(g)
\]
o\`u $\ii I$ est une collection finie des intervalles dyadiques, et $(\epsilon_I)_I$ est une suite born\'ee uniform\'ement (pour simplicit\'e, on suppose que $\vert \epsilon_I \vert \leq 1$ pour tout $I \in \ii I$). Les estimations obtenues ne d\'ependeront pas des coefficients ni de la collection $\ii I$. On rappelle que le prototype est celui de la forme bilin\'eaire $\Lambda_\ii I (f, g):=\sum_{I \in \ii I } \epsilon_I\langle f, \phi_I \rangle \langle g, \phi_I \rangle$, o\`u les $\phi_I$ sont des paquets d'onde $L^2$-normalis\'ees.

\begin{remark} Etant donn\'e que les fonctions $f,g$ sont fix\'ees dans $L^2$ et gr\^ace \`a \eqref{eq:energie}, on a la situation suivante: ou bien $\langle Tf, g \rangle=0$ et il n'y a rien \`a montrer, ou alors il existe une sous collection finie $\ii I \subset \Dy^0$ telle que
$$ \sum_{I\in \Dy^0 \setminus \ii I} |a_I(f)| \cdot |a_I(g)| \leq \frac{1}{2} |\langle Tf, g \rangle|.$$
Il suffit donc d'estimer seulement $\Lambda_{\ii I}(f,g)$ ce qui montre que l'on peut bien se restreindre \`a une collection finie d'intervalles.
\end{remark}

La propri\'et\'e la plus importante est le caract\`ere {\bf local} de la forme bilin\'eaire. On commence avec $I_0$ un intervalle dyadique fix\'e,  qui n'est pas n\'ecessairement contenu dans la collection $\ii I$. On d\'enote
\[
\ii I (I_0):=\left \lbrace I \in \ii I : I \subseteq I_0  \right\rbrace \quad \text{et  } \quad \ii I^+ (I_0):=\ii I (I_0) \cup \lbrace I_0 \rbrace.
\]

On a alors le r\'esultat suivant:

\begin{proposition}[Localisation de la forme bilin\'eaire]
\label{prop:local-Hilbert}
\[
\big|\Lambda_{\ii I \left(I_0 \right)}(f, g) \big| \lesssim \sssize_{I_0} f \cdot \sssize_{I_0} g \cdot |I_0|,
\]
o\`u le nouveau ``$\sssize$" est d\'efini par
\[
\sssize_{I_0}f := \sup_{ J \in \ii I^+ \left( I_0 \right)} \frac{1}{|J|} \int_{\rr R} |f| \cdot \ci_{J}^M dx.
\]
\end{proposition}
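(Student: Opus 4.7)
Posons $\sigma_f := \sssize_{I_0}(f)$ et $\sigma_g := \sssize_{I_0}(g)$. L'id\'ee est de combiner une in\'egalit\'e de Cauchy--Schwarz avec une d\'ecomposition en temps d'arr\^et qui permet de faire le pont entre le sup non restreint apparaissant dans \eqref{pr:size} (sur tous les sous-intervalles dyadiques de $I_0$) et le sup restreint d\'efinissant $\sssize$ (seulement sur $\ii I^+(I_0)$). On raisonnera par r\'ecurrence sur le cardinal de $\ii I(I_0)$, qu'on peut supposer fini d'apr\`es la remarque pr\'ec\'edant l'\'enonc\'e.

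L'\'etape du temps d'arr\^et consiste \`a poser $\alpha_h(J) := |J|^{-1} \int |h| \chi_J^M \, dx$ et \`a introduire la famille $\ic F$ des intervalles dyadiques maximaux $L \subsetneq I_0$ tels que $\alpha_f(L) > C \sigma_f$ ou $\alpha_g(L) > C \sigma_g$, pour une constante $C$ assez grande. Par d\'efinition de $\sssize$, on a $\ic F \cap \ii I^+(I_0) = \emptyset$. La comparaison entre $\chi_L^M$ et $\chi_{\widehat L}^M$ (o\`u $\widehat L$ d\'esigne le parent dyadique de $L$) montre, via la maximalit\'e, que $\alpha_f(L) \lesssim \sigma_f$ et $\alpha_g(L) \lesssim \sigma_g$ pour tout $L \in \ic F$. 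Il en r\'esulte, pour chaque tel $L$, que $\sssize_L(f) \lesssim \sigma_f$ et $\sssize_L(g) \lesssim \sigma_g$, puisque tout $J \in \ii I^+(L)$ est soit $L$ lui-m\^eme, soit un \'el\'ement de $\ii I^+(I_0)$.

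On partitionne ensuite
$$
\ii I(I_0) = \ii I^{\mathrm{bon}}(I_0) \sqcup \bigsqcup_{L \in \ic F} \ii I(L), \qquad \ii I^{\mathrm{bon}}(I_0) := \{I \in \ii I(I_0) : I \not\subset L \text{ pour tout } L \in \ic F\},
$$
et on d\'ecompose la forme bilin\'eaire selon cette partition. Les contributions ``mauvaises'' $\Lambda_{\ii I(L)}(f,g)$ s'estiment par r\'ecurrence (car $|\ii I(L)| < |\ii I(I_0)|$), donnant $|\Lambda_{\ii I(L)}(f,g)| \lesssim \sigma_f \sigma_g |L|$; la sommation sur les $L \in \ic F$ disjoints dans $I_0$ produit $\lesssim \sigma_f \sigma_g |I_0|$. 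Pour la contribution ``bonne'', Cauchy--Schwarz conduit \`a
$$
|\Lambda_{\ii I^{\mathrm{bon}}(I_0)}(f,g)| \leq \Big(\sum_{I \in \ii I^{\mathrm{bon}}(I_0)} |a_I(f)|^2\Big)^{1/2} \Big(\sum_{I \in \ii I^{\mathrm{bon}}(I_0)} |a_I(g)|^2\Big)^{1/2},
$$
et on conclut en estimant chaque facteur par une version localis\'ee de \eqref{pr:size}, restreinte aux sous-intervalles non strictement contenus dans un $L \in \ic F$.

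L'obstacle principal r\'eside justement dans la justification de cette version localis\'ee, \`a savoir $\sum_{I \in \ii I^{\mathrm{bon}}(I_0)} |a_I(f)|^2 \lesssim \sigma_f^2 |I_0|$ sans que le sup de la size ne soit pollu\'e par les sous-intervalles ``mauvais''. L'approche naturelle est d'appliquer l'\'energie \eqref{eq:energie} \`a $f \one_G$ o\`u $G := I_0 \setminus \bigcup_{L \in \ic F} L$: sur $G$, le th\'eor\`eme de diff\'erentiation de Lebesgue combin\'e au temps d'arr\^et donne $|f| \lesssim \sigma_f$ presque partout, tandis que $\|f\one_G \chi_{I_0}^M\|_1 \leq |I_0| \sigma_f$, donc par interpolation $\|f\one_G \chi_{I_0}^M\|_2^2 \lesssim \sigma_f^2 |I_0|$. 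Il reste \`a v\'erifier que pour $I \in \ii I^{\mathrm{bon}}(I_0)$ le coefficient $a_I(f)$ est bien approxim\'e par $a_I(f\one_G)$, ce qui utilise la d\'ecroissance rapide des paquets d'onde sous-jacents et le fait que les $L \in \ic F$ sont disjoints, \`a la mani\`ere des estimations hors-diagonales de \cite{multilinear_harmonic}.
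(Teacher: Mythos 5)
Votre strat\'egie --- temps d'arr\^et sur les moyennes $L^1$, d\'ecomposition bonne/mauvaise, r\'ecurrence sur le cardinal --- revient \`a red\'emontrer \`a la main le lemme de John--Nirenberg pour les ``sizes'', alors que la preuve du papier tient en deux lignes~: Cauchy--Schwarz donne le contr\^ole par le size $L^2$ restreint \`a $\ii I^+(I_0)$, puis on invoque directement \cite[Lemma 2.13]{multilinear_harmonic} ou \cite[Lemma 4.2]{MTTBiest0} (avec la bornitude $L^1\to L^{1,\infty}$ de la fonction carr\'ee) pour passer au $\sssize$. Votre reconstruction est l\'egitime dans l'esprit, mais elle comporte un trou r\'eel \`a son \'etape centrale, que vous signalez d'ailleurs vous-m\^eme~: l'estimation $\sum_{I\in\ii I^{\mathrm{bon}}}|a_I(f)|^2\lesssim\sigma_f^2|I_0|$ n'est pas \'etablie. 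Le passage de $a_I(f)$ \`a $a_I(f\one_G)$ suppose que $f\mapsto a_I(f)$ est lin\'eaire et que l'on dispose d'estimations hors-diagonales de paquets d'onde~; or la proposition est \'enonc\'ee sous les seules hypoth\`eses abstraites \eqref{eq:energie} et \eqref{pr:size}, qui ne fournissent ni l'un ni l'autre (et qui doivent couvrir, par exemple, les coefficients non lin\'eaires $a_I(F)$ de la section \ref{sec:leray}). M\^eme dans le mod\`ele de Haar, tronquer par $\one_G$ ne suffit pas~: il faut le remplacement de Calder\'on--Zygmund complet $f\one_G+\sum_{L\in\ic F}\big(\aver{L}f\big)\one_L$ pour reproduire exactement les coefficients des bons intervalles, et dans le mod\`ele abstrait aucune op\'eration de ce type n'est disponible.

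Deux points de comptabilit\'e restent \'egalement en suspens. D'une part, la sommation des contributions mauvaises ne se referme pas avec la seule disjonction des $L\in\ic F$ (qui ne donne que $\sum_L|L|\le|I_0|$)~: si $A$ d\'esigne la constante de r\'ecurrence, la partie mauvaise est major\'ee par $A\,(cC)^2\,\sigma_f\sigma_g\sum_L|L|$, et il faut donc d\'emontrer que $\sum_L|L|$ est une \emph{petite} fraction de $|I_0|$ (via l'in\'egalit\'e maximale de type faible appliqu\'ee \`a $|f|\,\chi_{I_0}^{M'}$) puis v\'erifier que le produit des constantes --- le facteur $C^2$ issu de $\sssize_L f\lesssim C\sigma_f$ contre le gain $C^{-1}$ sur la mesure --- est effectivement strictement inf\'erieur \`a $1$, ce qui n'est pas automatique et va plut\^ot \`a l'encontre du choix ``$C$ assez grand'' que vous annoncez~; de plus la r\'ecurrence sur le cardinal peut ne pas d\'ecro\^itre strictement si toute la collection $\ii I(I_0)$ est contenue dans un seul intervalle arr\^et\'e. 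D'autre part, la quantit\'e naturelle pour ce temps d'arr\^et est la fonction carr\'ee elle-m\^eme (comme dans les lemmes cit\'es et dans le Th\'eor\`eme \ref{thm:sparsecarre}), et non les moyennes $L^1$ de $|f|$~: c'est pr\'ecis\'ement ce qui permet de contr\^oler $\sum_I|a_I(f)|^2$ sur les bons intervalles sans repasser par le supremum non restreint de \eqref{pr:size}, qui est l'obstacle que vous cherchiez \`a contourner.
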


\begin{proof}
On peut estimer la forme bilin\'eaire, en appliquant Cauchy-Schwartz, par
\begin{align*}
\vert  \Lambda_{\ii I \left(I_0 \right)}(f, g)  \vert &\lesssim \big\|  \big( \sum_{I \in \ii I(I_0)}  \vert a_I(f) \vert^2 \frac{\one_I}{\vert I \vert} \big)^{1/2}\big\|_2 \cdot \big\|  \big( \sum_{I \in \ii I(I_0)}  \vert a_I(g) \vert^2 \frac{\one_I}{\vert I \vert} \big)^{1/2}\big\|_2 \\
& \lesssim \frac{1}{\vert I_0 \vert^{1/2}} \big\|  \big( \sum_{I \in \ii I(I_0)}  \vert a_I(f) \vert^2 \frac{\one_I}{\vert I \vert} \big)^{1/2}\big\|_2 \cdot \frac{1}{\vert I_0 \vert^{1/2}} \big\|  \big( \sum_{I \in \ii I(I_0)}  \vert a_I(g) \vert^2 \frac{\one_I}{\vert I \vert} \big)^{1/2}\big\|_2 \cdot \vert I_0 \vert \\
&\lesssim \ssize_{\ii I^+(I_0)} f \cdot \ssize_{\ii I^+ (I_0)} g \cdot |I_0|.
\end{align*}
Bien que $\ssize_{\ii I^+(I_0)}$ est une quantit\'e dans $L^2$, par John-Nirenberg et la bornitude $L^1 \mapsto L^{1, \infty}$ de la fonction carr\'ee, on peut remplacer $\ssize_{\ii I^+(I_0)}$ par $\sssize_{\ii I^+(I_0)}$, qui est une quantit\'e dans $L^1$. L'estimation $\ssize_{\ii I^+(I_0)}f \lesssim \sssize_{\ii I^+(I_0)} f$ est pr\'esent\'ee dans \cite[Lemma 2.13]{multilinear_harmonic}, \cite[Lemma 4.2]{MTTBiest0}.
\end{proof}

\begin{remark}
D'une mani\`ere g\'en\'erale, les intervalles $I_0$ qui seront consid\'er\'es ult\'erieurement, seront choisis par un temps d'arr\^et et v\'erifierons la propri\'et\'e que
$$ \sssize_{\ii I_{I_0}} f \lesssim \frac{1}{|I_0|} \int_{\rr R} |f| \cdot \ci_{I_0}^M dx $$
et l'analogue pour $g$. Autrement dit, le $\sssize$ qui est un supremum sur la collection $\ii I_{I_0}$ est control\'e par l'information \`a l'\'echelle de $I_0$.
Dans un tel cas, le $\sssize$ peut-\^etre vu comme une fonction maximale $\ic M_{\ii I \left( I_0 \right)}$. Ainsi, le temps d'arr\^et choisi les bons intervalles $I_0$ et  collections $\ii I_{I_0} \subseteq \ii I(I_0)$ tels qu'on a, en utilisant la Proposition \ref{prop:local-Hilbert}
$$|\Lambda_{\ii I _{ I_0} }(f, g)| \lesssim \inf_{y \in I_0}\ic M_{\ii I _{ I_0}}(f)(y) \cdot \inf_{y \in I_0}\ic M_{\ii I _{ I_0}}(g)(y) \cdot |I_0|.$$
\end{remark}

De fa\c{c}on similaire au Th\'eor\`eme \ref{thm:p>1}, on obtiendra:
\begin{proposition}
\label{prop:sparse-Hilbert-trans}
Il existe une collection \'eparse $\ic S$ d'intervalles  telle que
\begin{equation}
\label{eq:sparseHilbert}
\big|\Lambda_{\ii I}(f, g)\big| \lesssim  \sum_{Q \in \ic S}  \left(\frac{1}{|Q|} \int_{\rr R} |f|\cdot \ci_{Q}^M dx \right) \cdot \left(\frac{1}{|Q|} \int_{\rr R} |g|\cdot \ci_{Q}^M dx \right) \cdot |Q|.
\end{equation}
\end{proposition}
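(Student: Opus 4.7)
Le plan est d'it\'erer la Proposition \ref{prop:local-Hilbert} \`a travers un temps d'arr\^et bas\'e sur la quantit\'e $\sssize$, afin de construire g\'en\'eration par g\'en\'eration la collection \'eparse $\mcS$. Puisque $\ii I$ peut \^etre suppos\'ee finie (voir la remarque pr\'ec\'edant la Proposition \ref{prop:local-Hilbert}), on commence par fixer un grand intervalle dyadique $I_0$ contenant tous les $I\in\ii I$, que l'on place dans $\mcS$. \'Etant donn\'e $Q\in\mcS$ d\'ej\`a s\'electionn\'e, on d\'efinit ses descendants $ch_\mcS(Q)$ comme la collection des intervalles dyadiques maximaux $Q'\subsetneq Q$ v\'erifiant
\[
\frac{1}{|Q'|}\int_{\rr R}|f|\,\ci_{Q'}^M\,dx > C\cdot \frac{1}{|Q|}\int_{\rr R}|f|\,\ci_Q^M\,dx \quad\text{ou}\quad \frac{1}{|Q'|}\int_{\rr R}|g|\,\ci_{Q'}^M\,dx > C\cdot \frac{1}{|Q|}\int_{\rr R}|g|\,\ci_Q^M\,dx,
\]
pour une constante absolue $C$ \`a choisir. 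En compl\'etant it\'erativement avec toutes les g\'en\'erations, on obtient la collection $\mcS$.

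Le caract\`ere \'epars de $\mcS$ d\'ecoule du type faible $L^1$ de la fonction maximale avec cutoff $\ci_J^M$, qui est ponctuellement domin\'ee (\`a constante pr\`es) par la fonction maximale de Hardy--Littlewood appliqu\'ee \`a $|f|\ci_Q^{M/2}$. Par l'in\'egalit\'e de Chebyshev faible, on obtient donc $|\bigcup ch_\mcS(Q)|\leq (C'/C)|Q|$, ce qui fournit la condition $\eta$-\'eparse de la D\'efinition \ref{def:sparsebis} pourvu que $C$ soit assez grand. Ensuite on partitionne $\ii I=\bigsqcup_{Q\in\mcS}\ii I_Q$, o\`u $\ii I_Q:=\{I\in\ii I(Q):\ I\not\subseteq Q'\ \text{pour tout}\ Q'\in ch_\mcS(Q)\}$. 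Par construction, pour tout $J\in\ii I_Q^+$ la moyenne $\frac{1}{|J|}\int |f|\ci_J^M dx$ est born\'ee par $C\cdot\frac{1}{|Q|}\int|f|\ci_Q^M dx$ (et analogue pour $g$), sinon $J$ serait inclus dans un descendant s\'electionn\'e. Autrement dit, en utilisant \eqref{pr:size} pour identifier $\sssize$ comme une fonction maximale en $L^1$,
\[
\sssize_{\ii I_Q^+} f\lesssim \frac{1}{|Q|}\int|f|\ci_Q^M\,dx \quad\text{et}\quad \sssize_{\ii I_Q^+} g\lesssim \frac{1}{|Q|}\int|g|\ci_Q^M\,dx.
\]
Appliquant la Proposition \ref{prop:local-Hilbert} \`a la sous-collection $\ii I_Q\subseteq\ii I(Q)$ pour chaque $Q\in\mcS$, puis sommant en $Q$, on obtient directement \eqref{eq:sparseHilbert}.

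Le point principal \`a v\'erifier sera le caract\`ere \'epars: il faut contr\^oler la fonction maximale avec poids $\ci_{Q'}^M$ par la maximale standard, de sorte que la bornitude $L^1\to L^{1,\infty}$ donne $|\bigcup ch_\mcS(Q)|\leq\eta|Q|$ avec un $\eta\in(0,1)$ uniforme; la rapide d\'ecroissance des cutoff $\ci_J^M$ rend ce contr\^ole standard, mais la constante $C$ doit \^etre fix\'ee simultan\'ement pour $f$ et $g$. L'autre subtilit\'e, plus conceptuelle, est que le temps d'arr\^et est choisi par rapport aux moyennes $L^1$ tandis que la Proposition \ref{prop:local-Hilbert} manipule des normes $L^2$ (\`a travers la fonction carr\'ee associ\'ee \`a $\size$); c'est pr\'ecis\'ement l'am\'elioration $\ssize\lesssim\sssize$ fournie par John--Nirenberg, utilis\'ee dans la preuve de la Proposition \ref{prop:local-Hilbert}, qui permet de recoller les deux \'echelles.
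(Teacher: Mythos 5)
Votre construction suit essentiellement la preuve du papier : m\^eme temps d'arr\^et sur les moyennes $L^1$ \`a cutoff pour engendrer $\mcS$ g\'en\'eration par g\'en\'eration, m\^eme partition $\ii I=\bigsqcup_{Q\in\mcS}\ii I_Q$, m\^eme recollement via la Proposition \ref{prop:local-Hilbert} (o\`u John--Nirenberg convertit les moyennes $L^2$ de la fonction carr\'ee en moyennes $L^1$). Les deux diff\'erences de pr\'esentation --- enraciner l'arbre en un grand $I_0$ plut\^ot qu'aux intervalles maximaux de $\ii I$, et d\'efinir les enfants sans exiger qu'ils contiennent un intervalle restant de $\ii I_{Stock}$ --- sont sans cons\'equence pour l'estimation.

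Le seul point qui ne passe pas tel quel est la v\'erification quantitative du caract\`ere \'epars. Vous incluez $\bigcup ch_{\mcS}(Q)$ dans l'ensemble de niveau $\{\ic M(|f|\,\ci_Q^{M/2})>C\lambda\}$ avec $\lambda=\frac{1}{|Q|}\int|f|\,\ci_Q^M\,dx$, puis invoquez le type faible $(1,1)$ ; mais celui-ci donne une mesure $\lesssim (C\lambda)^{-1}\|f\,\ci_Q^{M/2}\|_1$, et $\|f\,\ci_Q^{M/2}\|_1$ n'est pas contr\^ol\'ee par $\|f\,\ci_Q^{M}\|_1=\lambda|Q|$ (prendre $f=\mathbf{1}_{[R,R+1]}$ et $Q=[0,1]$ avec $R$ grand : le rapport vaut $R^{M/2}$). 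La r\'eparation est standard et c'est celle que le papier utilise implicitement avec le m\^eme exposant $M$ : pour $x\in Q'\subseteq Q$, en d\'ecoupant l'int\'egrale selon que $d(y,Q')\leq \ell(Q)$ ou non, on obtient
\[
\frac{1}{|Q'|}\int_{\rr R}|f|\,\ci_{Q'}^M\,dy\;\lesssim\;\ic M\bigl(|f|\,\ci_Q^M\bigr)(x)\;+\;\frac{1}{|Q|}\int_{\rr R}|f|\,\ci_Q^M\,dy,
\]
de sorte que, pour $C$ assez grand, les enfants sont contenus dans $\{\ic M(|f|\,\ci_Q^M)\gtrsim C\lambda\}$ (et l'analogue pour $g$), dont la mesure est bien $\lesssim C^{-1}|Q|$ par type faible $(1,1)$, puisque cette fois $\|f\,\ci_Q^M\|_1=\lambda|Q|$. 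Le reste de votre argument est correct.
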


\begin{proof}
 Dans le cas pr\'esent, nous allons consid\'erer la formulation - d\'efinition \ref{def:sparsebis} pour le caract\`ere \'epars de la collection, avec un param\`etre $\eta=\frac{1}{2}$.

La collection $\ic S$ sera construite de mani\`ere r\'ecursive, de sorte qu'on peut l'\'ecrire comme $\ic S =\bigcup_k \ic S_k$. En effet, chaque intervalle $Q$ 
repr\'esente un bon support pour la forme bilin\'eaire localis\'ee $\Lambda_{\ii I}$, et il ne doit pas appartenir \`a la collection $\ii I$. Pour chaque $Q \in \ic S$ on aura une collection d'intervalles $\ii I_Q \subseteq \ii I (Q)$ associ\'ee telle que 
\[
\big| \Lambda_{\ii I_Q}(f, g)  \big| \lesssim \left(\frac{1}{|Q|} \int_{\rr R} |f|\cdot \ci_{Q}^M dx \right) \cdot \left(\frac{1}{|Q|} \int_{\rr R} |g|\cdot \ci_{Q}^M dx \right) \cdot |Q|.
\]

On commence par poser $\ii I_{Stock}:=\ii I$. On d\'efinit la collection $\ic S_0$ (le niveau z\'ero de la collection \'eparse) comme l'ensemble des intervalles maximaux $Q_0 \in \ii{I}$, et pour chacun d'entre eux, on d\'efinit $\ii I_{Q_0}$ comme la collection d'intervalles  $I \in \ii I_{Stock}(Q_0)$ telle que
\begin{equation}
\label{eq:condition}
\frac{1}{|I|} \int_{\rr R} |f| \cdot \ci_I^M dx \leq C \frac{1}{|Q_0|} \int_{\rr R} |f| \cdot \ci_{Q_0}^M dx \quad   \text{    et     } \quad \frac{1}{|I|} \int_{\rr R} |g| \cdot \ci_I^M dx \leq C \frac{1}{|Q_0|} \int_{\rr R} |g| \cdot \ci_{Q_0}^M dx.
\end{equation}

Apr\`es, on r\'einitialise $\ii I_{Stock}:= \ii I_{Stock} \setminus \bigcup_{Q_0 \in \ic S_0} \ii I_{Q_0}$, et il nous reste \`a d\'efinir les descendants de chaque $Q_0$. Les intervalles $I \in \ii I_{Stock}(Q_0)$, s'il y en a encore, ils satisfont une des deux conditions suivantes: soit
\[
 \frac{1}{|I|} \int_{\rr R} |f| \cdot \ci_I^M dx > C \frac{1}{|Q_0|} \int_{\rr R} |f| \cdot \ci_{Q_0}^M dx \quad \text{ ou  } \quad \frac{1}{|I|} \int_{\rr R} |g| \cdot \ci_I^M dx > C \frac{1}{|Q_0|} \int_{\rr R} |g| \cdot \ci_{Q_0}^M dx.
 \] 
L'ensemble des descendants de $Q_0$, $ch_{\ic S}(Q_0)$, se compose des intervalles maximaux $Q \subset Q_0$ qui contiennent au moins un intervalle $I \in \ii I_{Stock}(Q_0)$ (donc, la moyenne de $|f| \cdot \ci_{Q_0}$ sur $I$ est trop grande, ou celle de $|g| \cdot \ci_{Q_0}$), et pour lesquels on a 
\[
 \frac{1}{|Q|} \int_{\rr R} |f| \cdot \ci_Q^M dx > C \frac{1}{|Q_0|} \int_{\rr R} |f| \cdot \ci_{Q_0}^M dx \quad \text{ o\`u  } \quad \frac{1}{|Q|} \int_{\rr R} |g| \cdot \ci_Q^M dx > C \frac{1}{|Q_0|} \int_{\rr R} |g| \cdot \ci_{Q_0}^M dx.
 \] 

De cette fa\c{c}on, $ch_{\ic S}(Q_0)$ est une collection d'intervalles disjoints, dont l'union est contenue dans
\[
\big\lbrace x: \ic M \big( |f| \cdot \ci_{Q_0}^M \big) > C \frac{1}{|Q_0|}\int_{\rr R} |f| \cdot \ci_{Q_0}^M dx   \big\rbrace \cup \big\lbrace x: \ic M \big( |g| \cdot \ci_{Q_0}^M \big)> C \frac{1}{|Q_0|}\int_{\rr R} |g| \cdot \ci_{Q_0}^M dx    \big\rbrace.
\]
La bornitude $L^1 \to L^{1, \infty}$ de la fonction maximale entraine alors que $\ds \sum_{Q \in ch_{\ic S}(Q_0)}|Q| \leq \frac{1}{2}|Q_0|$, si la constante $C$ est choisie assez grande,  ce qui correspond \`a la condition ``\'eparse'' d\'esir\'ee. 

Avant de d\'ecrire la construction it\'erative  $(k \Rightarrow k+1)$, les observations suivantes sont n\'ecessaires:
\begin{itemize}
\item[(a)] Dans la construction de $\ic S_0$, chaque $Q_0$ est contenu dans $\ii I_{Q_0}$, donc celle-ci n'est pas vide. \c{C}a ne sera pas toujours le cas pour $k \geq 1$ (il est possible que $\ii I_{Q_0} =\emptyset$), mais \c{c}a n'aura aucune cons\'equence pour le temps d'arr\^et.
\item[(b)] On a $\sssize_{\ii I_{Q_0}} f \lesssim \frac{1}{|Q_0|} \int_{\rr R} |f| \cdot \ci_{Q_0}^M dx$ et de m\^eme pour $g$.
\end{itemize}

Supposons maintenant que $\ic S_k$ a \'et\'e d\'ejà d\'efinie, et que pour chaque $Q_0 \in \ic S_k$, on a une collection \'eparse des descendants directs $ch_{\ic S}(Q_0)$, ainsi qu'une collection $\ii I_{Q_0} \subset \ii I$. On rappelle que $\ds \ii I_{Stock}:=\ii I \setminus \bigcup_{j \leq k} \bigcup_{K \in \ic S_j} \ii I_K$ est constitue par les intervalles $I \in \ii I$ qui n'ont pas \'et\'e choisis par le temps d'arr\^et. 

On d\'efini alors $\ic S_{k+1}:= \cup_{Q_0 \in \ic S_k} \lbrace Q: Q \in ch_{\ic S}(Q_0) \rbrace$, et il reste \`a d\'efinir $\ii I_Q$ et $ch_{\ic S}(Q)$ pour chaque $Q \in \ic S_{k+1}$.
On cherche des intervalles $I \in \ii I_{Stock} \cap \ii I(Q)$ telle que 
\begin{equation}
\label{eq:condition-k}
\frac{1}{|I|} \int_{\rr R} |f| \cdot \ci_I^M dx \leq C \frac{1}{|Q|} \int_{\rr R} |f| \cdot \ci_{Q}^M dx \quad   \text{    et     } \quad \frac{1}{|I|} \int_{\rr R} |g| \cdot \ci_I^M dx \leq C \frac{1}{|Q|} \int_{\rr R} |g| \cdot \ci_{Q}^M dx.
\end{equation}

Il est possible qu'il n'y a pas d'intervalles satisfaisant cette propri\'et\'e, donc pour $k \geq 1$ il peut arriver que $\ii I_Q =\emptyset$. Apr\`es avoir construit la collection $\ii I_Q$, on r\'einitialise $\ds\ii I_{Stock}:=\ii I_{Stock} \setminus \bigcup_{Q \in \ic S_{k+1}} \ii I_Q$.

Les descendants directs $ch_{\ic S}(Q)$ de $Q$ sont des intervalles maximaux $P$ qui contiennent au moins un  $I \in \ii I_{Stock} \cap \ii I(Q)$ (pour un tel $I$, la condition \eqref{eq:condition-k} est fausse), et de plus, on veut aussi que 
\[
 \frac{1}{|P|} \int_{\rr R} |f| \cdot \ci_P^M dx > C \frac{1}{|Q|} \int_{\rr R} |f| \cdot \ci_{Q}^M dx \quad \text{ o\`u  } \quad \frac{1}{|P|} \int_{\rr R} |g| \cdot \ci_P^M dx > C \frac{1}{|Q|} \int_{\rr R} |g| \cdot \ci_{Q}^M dx.
 \]

Comme on l'a d\'ejà vu, la bornitude de la fonction maximale implique que $\sum_{P \in ch_{S}\left(Q\right)}|P| \leq \frac{1}{2}|Q|$. Si on arrive \`a avoir 
$\ii I_{Stock} \cap \ii I(Q)=\emptyset$, alors on choisi $ch_{\ic S}(Q)=\emptyset$.

La proc\'edure s'arr\^etera apr\`es un nombre fini d'\'etapes car la collection initiale $\ii I$ \'etait finie. Pour chaque $I \in \ii I$ il existe un unique 
$Q \in \cup_k \ic S_k$ telle que $I$ est contenu dans $\ii I_Q$: on peut voir qu'il existe un certain $Q$ (qui doit \^etre choisi par le temps d'arr\^et) pour lequel
\[
\frac{1}{|I|} \int_{\rr R} |f| \cdot \ci_I^M dx \leq C \frac{1}{|Q|} \int_{\rr R} |f| \cdot \ci_{Q}^M dx \quad   \text{    et     } \quad \frac{1}{|I|} \int_{\rr R} |g| \cdot \ci_I^M dx \leq C \frac{1}{|Q|} \int_{\rr R} |g| \cdot \ci_{Q}^M dx.
\]
La propri\'et\'e ci-dessus est vraie pour $Q=I$, mais \c{c}a peut \^etre vrai avec un intervalle $Q \in \ic S$ dont $I$ est strictement contenu.

De plus, le processus de s\'election implique que pour chaque $Q_0 \in \ic S$, on a
\[
\sssize_{\ii I_{Q_0}} f \lesssim \frac{1}{|Q_0|} \int_{\rr R} |f| \cdot \ci_{Q_0}^M dx \quad \text{et} \quad \sssize_{\ii I_{Q_0}} g \lesssim \frac{1}{|Q_0|} \int_{\rr R} |g| \cdot \ci_{Q_0}^M dx.
\]
Cela permet d'estimer la forme bilin\'eaire par:
\begin{align*}
\big| \Lambda_{\ii I }(f, g)\big| &\lesssim \sum_k \sum_{Q \in S_k}  \big| \Lambda_{\ii I_Q}(f,g) \big| \\
&\lesssim \sum_{Q \in \ic S} \sssize_{\ii I_Q} f \cdot \sssize_{\ii I_Q} g \cdot |Q| \\
&\lesssim \sum_{Q \in \ic S} \left(\frac{1}{|Q|} \int_{\rr R} |f| \cdot \ci_{Q}^M dx\right) \cdot \left(  \frac{1}{|Q|} \int_{\rr R} |g| \cdot \ci_{Q}^M dx\right) \cdot |Q|,
\end{align*}
qui correspond \`a l'estimation \'eparse de \eqref{eq:sparseHilbert}.
\end{proof}

\subsection{Domination \'eparse via des fonctions carr\'ees}
\label{subsec:carre}

Pour les multiplicateurs de Fourier, qui sont plus r\'eguliers que les op\'erateurs de Calder\'on-Zygmund g\'en\'eraux, on peut montrer une domination \'eparse avec des fonctions carr\'ees. C'est le but de cette sous-section, d'obtenir cette estimation plus pr\'ecises, qui conserve la structure fr\'equentielle de l'op\'erateur initial.

Comme pr\'ec\'edemment, on consid\`ere la forme bilin\'eaire mod\`ele:
\begin{equation}
\label{def:operator}
\Lambda_\ii I (f, g):=\sum_{I \in \ii I } c_I a_I(f) a_I(g)
\end{equation}
o\`u $\ii I \subset \Dy^0$ est une collection finie d'intervalles dyadiques, $\lbrace  c_I \rbrace_I$ est une suite de nombres complexes uniform\'ement born\'es.

On va montrer alors le r\'esultat suivant:

\begin{theorem} \label{thm:sparsecarre} Fixons des exposants $p,q\in(0,\infty)$. Pour toutes fonctions $f,g \in L^2$ il existe une collection \'eparse $\mcS$ telle que 
$$ \left|\Lambda_\ii I (f, g)\right| \lesssim \sum_{Q\in \mcS} \left(\aver{Q} |S_Qf|^p  dx \right)^{1/p} \cdot \left(\aver{Q} |S_Qg|^q  dx \right)^{1/q} \cdot |Q|.$$
\end{theorem}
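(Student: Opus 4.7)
Le plan est d'adapter la stratégie du temps d'arrêt de la Proposition \ref{prop:sparse-Hilbert-trans}, en remplaçant les moyennes $L^1$ de $|f|$ et $|g|$ par des moyennes $L^p$ et $L^q$ des fonctions carrées localisées $S_Q f$ et $S_Q g$. L'étape initiale est d'établir une version localisée analogue à la Proposition \ref{prop:local-Hilbert}. Pour une sous-collection $\ii J \subseteq \ii I(I_0)$, Cauchy-Schwarz et la majoration
$$\sum_{I \in \ii J}|a_I(f)|^2 \leq \int |S_{I_0} f|^2\, dx$$
entrainent $|\Lambda_{\ii J}(f, g)| \leq \|S_{I_0} f\|_2 \cdot \|S_{I_0} g\|_2$. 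En introduisant ensuite le \og size \fg{} adapté $\sssize^p_{I_0} f := \sup_{J \in \ii J^+(I_0)} (\frac{1}{|J|} \int |S_J f|^p\, dx)^{1/p}$ et en invoquant le théorème de John-Nirenberg pour les fonctions carrées (\cite[Lemma 2.13]{multilinear_harmonic}, généralisé à tout exposant $p \in (0, \infty)$ d'une manière indépendante du poids), on obtient la localisation
$$|\Lambda_{\ii J}(f, g)| \lesssim \sssize^p_{\ii J^+(I_0)} f \cdot \sssize^q_{\ii J^+(I_0)} g \cdot |I_0|.$$

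La construction de la collection éparse $\ic S = \bigcup_k \ic S_k$ suit ensuite le même schéma que dans la Proposition \ref{prop:sparse-Hilbert-trans}, mais avec un critère de sélection fondé sur les nouvelles moyennes: étant donné $Q_0$ dans $\ic S_k$, on pose $I \in \ii I_{Q_0}$ si
$$\Big(\frac{1}{|I|}\int |S_I f|^p\, dx\Big)^{1/p} \leq C \Big(\aver{Q_0}|S_{Q_0}f|^p\, dx\Big)^{1/p} \quad \text{et} \quad \Big(\frac{1}{|I|}\int |S_I g|^q\, dx\Big)^{1/q} \leq C \Big(\aver{Q_0}|S_{Q_0}g|^q\, dx\Big)^{1/q}.$$
Les descendants $ch_{\ic S}(Q_0)$ sont les intervalles maximaux dans $Q_0$ où au moins l'une de ces conditions est violée. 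Le caractère épars se vérifie comme avant: ces descendants sont contenus dans l'union des ensembles de niveau $\{\ic M(|S_{Q_0}f|^p) > C^p \aver{Q_0}|S_{Q_0}f|^p\} \cup \{\ic M(|S_{Q_0}g|^q) > C^q \aver{Q_0}|S_{Q_0}g|^q\}$, de mesure $\leq \frac{1}{2}|Q_0|$ pour $C$ assez grand, par la bornitude $L^1 \to L^{1, \infty}$ de $\ic M$ appliquée aux fonctions intégrables $|S_{Q_0}f|^p, |S_{Q_0}g|^q \in L^1(Q_0)$.

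Le temps d'arrêt assure par construction que $\sssize^p_{\ii I_Q^+} f \lesssim (\aver{Q}|S_Q f|^p\, dx)^{1/p}$ pour chaque $Q \in \ic S$ (et l'analogue pour $g$); combiné avec la localisation, puis sommé sur $\ic S$, cela fournit l'estimation éparse voulue. L'obstacle principal est l'équivalent de John-Nirenberg pour les fonctions carrées, qui permet de ramener tout exposant $p \in (0, \infty)$ à l'exposant canonique $2$ découlant de la condition d'énergie \eqref{eq:energie}: c'est la clef technique rendant possible le passage de la domination éparse classique aux dominations via fonctions carrées, et donc la conservation de la structure fréquentielle à travers un contrôle épars.
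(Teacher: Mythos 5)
Votre preuve est correcte et suit essentiellement la même démarche que celle du papier : localisation par Cauchy--Schwarz puis John--Nirenberg pour passer de l'exposant $2$ aux exposants $p,q$ quelconques (c'est la Proposition \ref{prop:localization}), suivie du même temps d'arrêt fondé sur les moyennes $L^p$/$L^q$ des fonctions carrées localisées, le caractère épars étant vérifié via la fonction maximale (votre formulation avec $\ic M(|S_{Q_0}f|^p)$ et la bornitude $L^1\to L^{1,\infty}$ est identique à celle du papier avec $\ic M_p$ et sa bornitude $L^p\to L^{p,\infty}$). Vous identifiez aussi correctement l'ingrédient clef, à savoir le lemme de John--Nirenberg pour les fonctions carrées.
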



\begin{proposition}
\label{prop:localization}
Si $I_0$ est un intervalle dyadique fix\'e et si  $\ii I(I_0):=\lbrace I \in \ii I : I \subseteq I_0  \rbrace$, alors
{\fontsize{10.5}{10}
\[
\big| \Lambda_{\ii I\left( I_0 \right)}(f,g) \big| \lesssim \sup_{I' \in \ii I \left( I_0 \right)} \frac{1}{\vert I' \vert^{\frac{1}{p}}} \big\|  \Big( \sum_{\substack{I \subseteq I' \\ I \in \ii I(I_0)}}  \frac{\vert a_I(f) \vert^2}{\vert I \vert}  \one_I \Big)^{1/2}   \big\|_{p, \infty} \cdot  \sup_{I' \in \ii I \left( I_0 \right)} \frac{1}{\vert I' \vert^{\frac{1}{q}}} \big\|  \Big( \sum_{\substack{I \subseteq I' \\ I \in \ii I(I_0)}}  \frac{\vert a_I(g) \vert^2}{\vert I \vert}  \one_I \Big)^{1/2} \big\|_{q, \infty} \cdot \vert I_0  \vert
\]}
pour tous $0<p, q <\infty$. 
\begin{proof}
L'in\'egalit\'e de Cauchy-Schwartz implique imm\'ediatement l'estimation 
\[
\big| \Lambda_{\ii I\left( I_0 \right)}(f,g) \big| \lesssim \frac{1}{\vert I_0 \vert^{\frac{1}{p}}} \big\|  \Big( \sum_{ I \in \ii I(I_0)}  \frac{\vert a_I(f) \vert^2}{\vert I \vert}  \one_I \Big)^{1/2}   \big\|_{2} \cdot \frac{1}{\vert I_0\vert^{\frac{1}{p}}} \big\|  \Big( \sum_{ I \in \ii I(I_0)}  \frac{\vert a_I(g) \vert^2}{\vert I \vert}  \one_I \Big)^{1/2}   \big\|_{2} \cdot \vert Q_0 \vert,
\]
qui entra\^ine la conclusion pour $p=q=2$, avec des norme $L^2$. Pour obtenir le cas g\'en\'eral pour tous $0<p, q <\infty$, il suffi d'invoquer le lemme de John-Nirenberg, comme pr\'esent\'e dans \cite[Section 2.6]{multilinear_harmonic}.
\end{proof}
\end{proposition}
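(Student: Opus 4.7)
L'approche se d\'ecompose naturellement en deux \'etapes: d'abord r\'eduire l'estimation par une application directe de Cauchy-Schwarz \`a une borne en termes de normes $L^2$ fortes de fonctions carr\'ees localis\'ees, puis utiliser une version du lemme de John-Nirenberg adapt\'ee aux fonctions carr\'ees afin de remplacer les normes $L^2$ par des normes faibles $L^{p,\infty}$ et $L^{q,\infty}$ pour des exposants arbitraires dans $(0,\infty)$.

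Pour la premi\`ere \'etape, je partirai de l'in\'egalit\'e brute
\[
\big| \Lambda_{\ii I(I_0)}(f,g) \big| \leq \sum_{I \in \ii I(I_0)} |a_I(f)| \cdot |a_I(g)| = \int_{\rr R} \sum_{I \in \ii I(I_0)} \frac{|a_I(f)| \cdot |a_I(g)|}{|I|} \one_I(x) \, dx,
\]
o\`u l'\'egalit\'e utilise simplement que $|I|^{-1} \int_I \one_I \, dx = 1$. En appliquant ensuite Cauchy-Schwarz ponctuellement dans la somme sous l'int\'egrale, puis Cauchy-Schwarz dans l'int\'egrale, on aboutit \`a
\[
\big| \Lambda_{\ii I(I_0)}(f,g) \big| \leq \big\| S_{\ii I(I_0)} f \big\|_{2} \cdot \big\| S_{\ii I(I_0)} g \big\|_{2},
\]
avec $S_{\ii I(I_0)}$ la fonction carr\'ee d\'efinie comme dans \eqref{def:square-function} mais restreinte \`a $\ii I(I_0)$. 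En multipliant et divisant par $|I_0|^{1/2}$ dans chaque facteur, on obtient le r\'esultat pour $p=q=2$ avec des normes $L^2$ fortes sur l'intervalle $I_0$ tout entier.

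La seconde \'etape, qui constitue le c\oe ur technique de la preuve, consiste \`a passer des normes $L^2$ fortes aux normes faibles $L^{r,\infty}$ pour $r$ arbitraire. Pour cela, j'invoquerai le lemme de John-Nirenberg pour fonctions carr\'ees pr\'esent\'e dans \cite[Section 2.6]{multilinear_harmonic}, qui affirme que pour toute fonction $h$ et tout intervalle $I_0$, la quantit\'e
\[
N_r(h) := \sup_{I' \in \ii I(I_0)} \frac{1}{|I'|^{1/r}} \Big\|\, \big( \sum_{\substack{I \subseteq I' \\ I \in \ii I(I_0)}} \frac{|a_I(h)|^2}{|I|} \one_I \big)^{1/2} \, \Big\|_{r, \infty}
\]
est \'equivalente \`a $N_s(h)$ pour tous exposants $r, s \in (0, \infty)$, avec des constantes uniformes en $h$ et en $\ii I(I_0)$. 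En appliquant cette \'equivalence pour $h=f$ (en passant de $r=2$ fort \`a $r=p$ faible) et pour $h=g$ (de $r=2$ fort \`a $r=q$ faible), la borne obtenue \`a l'\'etape 1 se transforme exactement en l'in\'egalit\'e souhait\'ee.

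L'obstacle principal se situe clairement dans la seconde \'etape, la premi\`ere n'\'etant qu'un calcul de routine. L'\'equivalence des $N_r$ refl\`ete le caract\`ere de type $BMO$ de la fonction carr\'ee: dans le cas particulier $a_I(h) = \langle h, h_I \rangle$ (ondelettes de Haar), $N_r(h)$ se r\'eduit \`a la norme $BMO$ dyadique de $h$ exprim\'ee via des moyennes $L^r$, et l'\'equivalence co\"incide avec le th\'eor\`eme classique de John-Nirenberg. Dans le cadre g\'en\'eral, sous la condition d'\'energie \eqref{eq:energie}, l'argument repose sur un temps d'arr\^et au niveau des ensembles de niveau de la fonction carr\'ee, technique standard en analyse temps-fr\'equence dont les d\'etails sont fournis dans les r\'ef\'erences cit\'ees.
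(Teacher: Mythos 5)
Your proposal is correct and follows essentially the same route as the paper: a direct Cauchy--Schwarz argument yielding the $L^2$ bound with localized square functions, followed by the John--Nirenberg lemma of \cite[Section 2.6]{multilinear_harmonic} to convert the strong $L^2$ norms into weak $L^{p,\infty}$ and $L^{q,\infty}$ quantities for arbitrary exponents. The extra detail you supply on how Cauchy--Schwarz is applied and on the content of the John--Nirenberg equivalence is consistent with, and merely expands, the paper's terser argument.
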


\begin{proof}[Preuve du Th\'eor\`eme \ref{thm:sparsecarre}]
L'objectif est d'obtenir une collection \'eparse $\ic S$ d'intervalles dyadiques qui satisfont les conditions suivantes:
\begin{itemize}
\item pour chaque $Q \in \ic S$, il existe une collection $ch_{\ic S}(Q)$ des descendants directs telle que la condition \'eparse est v\'erifi\'ee: 
\[
\sum_{P \in h_{\ic S}(Q)} \vert P \vert \leq \frac{1}{2} \vert Q \vert.
\]
\item \'egalement, pour chaque $Q \in \ic S$ il existe une sous-collection $\ii I_Q \subseteq \ii I(Q)$ compos\'ee par de ``bons intervalles'' telle que 
\[
\vert\Lambda_{\ii I_Q}(f, g)\vert \lesssim \frac{1}{\vert Q \vert^{\frac{1}{p}}} \big\|  \Big( \sum_{ I \in \ii I(Q)}  \frac{\vert a_I(f) \vert^2}{\vert I \vert}  \one_I \Big)^{1/2}   \big\|_{p} \cdot \frac{1}{\vert Q \vert^{\frac{1}{q}}} \big\|  \Big( \sum_{ I \in \ii I(Q)} \frac{\vert a_I(g) \vert^2}{\vert I \vert}  \one_I \Big)^{1/2}   \big\|_{q} \cdot \vert Q  \vert.
\]
\end{itemize}

On commence en initialisant $\ii I_{Stock}:=\ii I$ et en choisissant $Q_0 \in \ii I_{Stock}$ un intervalle maximal (ceux-ci forment $\ic S_0$). La collection $\ii I_{Q_0}$ se compose par les intervalles $I' \in \ii I_{Stock}(Q_0)$ telle que
\begin{equation}
\label{eq:constraint-f}
\frac{1}{\vert I' \vert^{\frac{1}{p}}} \big\|  \Big( \sum_{ I \in \ii I_{Stock}(I')}  \frac{\vert a_I(f) \vert^2}{\vert I \vert}  \one_I \Big)^{1/2}   \big\|_{p} \leq C \cdot \frac{1}{\vert Q_0 \vert^{\frac{1}{p}}} \big\|  \Big( \sum_{ I \in \ii I_{Stock}(Q_0)}  \frac{\vert a_I(f) \vert^2}{\vert I \vert}  \one_I \Big)^{1/2}   \big\|_{p}
\end{equation}
et simultan\'ement pour la fonction $g$
\begin{equation}
\label{eq:constraint-g}
\frac{1}{\vert I' \vert^{\frac{1}{q}}} \big\|  \Big( \sum_{ I \in \ii I_{Stock}(I')}  \frac{\vert a_I(g) \vert^2}{\vert I \vert}  \one_I \Big)^{1/2}   \big\|_{q} \leq C \cdot \frac{1}{\vert Q_0 \vert^{\frac{1}{q}}} \big\|  \Big( \sum_{ I \in \ii I_{Stock}(Q_0)}  \frac{\vert a_I(g) \vert^2}{\vert I \vert}  \one_I \Big)^{1/2}   \big\|_{q}.
\end{equation}

Pour chaque $Q_0 \in \ic S$, on choisit ses descendants parmi les intervalles  $Q \in \ii I_{Stock}(Q_0)$ pour lesquels une des conditions \eqref{eq:constraint-f} ou \eqref{eq:constraint-g} est fausse, et qui, de plus, sont maximaux. Donc pour un tel intervalle $Q$ on a 
\[
\frac{1}{\vert Q \vert^{\frac{1}{p}}} \big\|  \Big( \sum_{ I \in \ii I_{Stock}(Q)}  \frac{\vert a_I(f) \vert^2}{\vert I \vert}  \one_I \Big)^{1/2}   \big\|_{p} > C \cdot \frac{1}{\vert Q_0 \vert^{\frac{1}{p}}} \big\|  \Big( \sum_{ I \in \ii I_{Stock}(Q_0)}  \frac{\vert a_I(f) \vert^2}{\vert I \vert}  \one_I \Big)^{1/2}   \big\|_{p},
\]
ou la condition similaire pour $g$. Dans les deux cas, l'in\'egalit\'e au-dessus implique que 
\[
\inf_{y \in Q} \ic M_p \big( \big( \sum_{ I \in \ii I_{Stock}(Q_0)}  \frac{\vert a_I(f) \vert^2}{\vert I \vert}  \one_I \big)^{1/2}  \big)(y ) >C \cdot \frac{1}{\vert Q_0 \vert^{\frac{1}{p}}} \big\|  \big( \sum_{ I \in \ii I_{Stock}(Q_0)}  \frac{\vert a_I(f) \vert^2}{\vert I \vert}  \one_I \big)^{1/2}   \big\|_{p}.
\]
Maintenant on peut  v\'erifier la condition \'eparse: $\sum_{Q \in h_{\ic S}(Q_0)} \vert Q \vert \leq \frac{1}{2} \vert Q_0 \vert$. On note que chaque $Q$ est contenu dans l'ensemble
\[
 \Big\lbrace  x: \ic M_p \big( \sum_{ I \in \ii I_{Stock}(Q_0)}  \frac{\vert a_I(f)\vert^2}{\vert I \vert}  \one_I(x) \big)^{1/2} > C  \frac{1}{\vert Q_0 \vert^{\frac{1}{p}}} \big\|  \big( \sum_{ I \in \ii I_{Stock}(Q_0)}  \frac{\vert a_I(f) \vert^2}{\vert I \vert}  \one_I \big)^{1/2}   \big\|_{p} \Big\rbrace, 
\]
o\`u dans l'ensemble correspondant \`a la fonction $g$.

On utilise la bornitude $L^p \to L^{p, \infty}$ de l'op\'erateur $\ic M_p$ et la maximalit\'e des intervalles $Q \in ch_{\ic S}(Q_0)$ pour calculer
{\fontsize{10.5}{10}
\begin{align*}
\sum_{Q \in ch_{\ic S}\left( Q_0 \right) } \vert Q \vert &\lesssim \Big(  C  \frac{1}{\vert Q_0 \vert^{\frac{1}{p}}} \big\|  \big( \sum_{ I \in \ii I_{Stock}(Q_0)}  \frac{\vert a_I(f) \vert^2}{\vert I \vert}  \one_I \big)^{1/2}   \big\|_p \Big)^{-p} \big\| \ic M_p \big( \sum_{ I \in \ii I_{Stock}(Q_0)}  \frac{\vert a_I(f) \vert^2}{\vert I \vert}  \one_I(x) \big)^{1/2} \big\|_{p, \infty}^p \\
& \lesssim C^{-1} \vert Q_0\vert.
\end{align*}}
En effet, on doit prendre en consid\'eration les intervalles $Q \in ch_{\ic S}(Q_0)$ pour lesquels la condition \eqref{eq:constraint-g} est fausse pour la fonction $g$. En choisissant une constante $C$ assez grande, on obtient la condition \'eparse.

Apr\`es avoir construit les collections $\ii I_{Q_0}$ et $ch_{\ic S}(Q_0)$, on r\'einitialise $\ii I_{Stock}:=\ii I_{Stock} \setminus \cup_{Q_0 \in \ic S_0} \ii I_{Q_0}$. D'ici, on r\'eit\`ere la proc\'edure d'une mani\`ere claire pour obtenir une collection \'eparse $\ic S$.

Cela permet d'estimer la forme bilin\'eaire par 
\begin{equation}
\label{eq:sparce-sf}
\vert \Lambda_{\ii I} (f, g) \vert \lesssim \sum_{Q \in \ic S} \frac{1}{\vert Q \vert^{\frac{1}{p}}} \big\|  \Big( \sum_{ I \in \ii I(Q)}  \frac{\vert a_I(f) \vert^2}{\vert I \vert}  \one_I \Big)^{1/2}   \big\|_p \cdot \frac{1}{\vert Q \vert^{\frac{1}{p}}} \big\|  \Big( \sum_{ I \in \ii I(Q)} \frac{\vert a_I(g) \vert^2}{\vert I \vert}  \one_I \Big)^{1/2}   \big\|_q \cdot \vert Q  \vert,
\end{equation}
pour tous $0<p, q<\infty$.
\end{proof}

\subsection{Application \`a la bornitude dans des espaces de Hardy $H^p$ \`a poids pour $0<p\leq 1$}
\label{subsec:p<1}

Pour $I_0$ un intervalle, on consid\`ere la fonction carr\'ee localis\'ee
$$ S_{I_0}(f) := x \mapsto \big( \sum_{I \subset I_0} |a_I(f)|^2 \frac{\one_I(x)}{|I|}\big)^{1/2}.$$

Pour un poids $\omega\in L^1_{loc}$, on d\'efini l'espace de Hardy \`a poids $H^p_\omega$ pour $p\in(0,1]$ par la norme $\|Sf \|_{L^p_\omega}$ et l'espace $CMO_\omega^p$ par la norme
$$ \sup_{I_0} \frac{1}{\omega(I_0)^\frac{1}{p}} \Big(\omega(I_0) \sum_{I \subseteq I_0} \vert a_I(f)\vert^2 \frac{\vert I \vert}{\omega(I)} \Big)^{1/2}.$$
Ici, on identifie le poids $\omega$ avec la mesure $\omega dx$, pour noter $\omega(I)=\int_I \omega(x) dx$. Nous allons montrer l'estimation suivante:

\begin{proposition} \label{prop:Hardy} Sous les hypoth\`eses \eqref{eq:energie} et \eqref{pr:size}, fixons un exposant $p\in(0,1]$, un poids $\omega$ et deux fonctions $f\in H^p_\omega$ et $g\in CMO_\omega^p$. Alors pour toute collection finie $\ii I$ de cubes
$$ \vert \Lambda_{\ii I}(f, g) \vert \lesssim \| f \|_{H^p_\omega} \cdot \| g \|_{CMO_\omega^p},$$
avec une constante implicite ind\'ependante des fonctions $f,g$ et du poids $\omega$.
\end{proposition}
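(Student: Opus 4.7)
The plan is to use a weighted atomic decomposition of $H^p_\omega$ (in the spirit of Subsection~\ref{sec:atomic-dec}) together with a weighted Cauchy--Schwarz pairing that is exactly matched to the reciprocal-weight factor in the definition of $\|g\|_{CMO^p_\omega}$. The key conceptual point is that the $CMO^p_\omega$ norm has been defined through $|I|/\omega(I)$ precisely so that it dualises against a square-function-controlled atomic decomposition of $H^p_\omega$, with a constant independent of $\omega$.

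\textbf{Step 1 (Atomic decomposition).} Use the decomposition developed in Subsection~\ref{sec:atomic-dec} to write $f = \sum_j \lambda_j a_j$, where each $a_j$ is localised (in the frequency sense of the $a_I$) to a cube $Q_j$, satisfies the square-function size condition $\|S a_j\|_{L^2(\omega)} \lesssim \omega(Q_j)^{1/2 - 1/p}$, and $\sum_j |\lambda_j|^p \lesssim \|f\|_{H^p_\omega}^p$. By linearity, $\vert \Lambda_\ii I(f,g) \vert \leq \sum_j |\lambda_j| \, \vert \Lambda_\ii I(a_j,g) \vert$.

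\textbf{Step 2 (Bound on one atom).} At the level of each dyadic index, split $|a_I(a_j)| \cdot |a_I(g)|$ as $\bigl(|a_I(a_j)| (\omega(I)/|I|)^{1/2}\bigr) \cdot \bigl(|a_I(g)| (|I|/\omega(I))^{1/2}\bigr)$ and apply Cauchy--Schwarz:
$$ \vert \Lambda_\ii I(a_j,g) \vert \leq \Big( \sum_I |a_I(a_j)|^2 \frac{\omega(I)}{|I|} \Big)^{1/2} \Big( \sum_{I \subseteq Q_j} |a_I(g)|^2 \frac{|I|}{\omega(I)} \Big)^{1/2}. $$
The first factor equals $\|S a_j\|_{L^2(\omega)} \lesssim \omega(Q_j)^{1/2 - 1/p}$ by the atomic condition; the second is $\leq \omega(Q_j)^{1/p - 1/2} \|g\|_{CMO^p_\omega}$ by applying the definition of $\|g\|_{CMO^p_\omega}$ with $I_0 = Q_j$. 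The powers of $\omega(Q_j)$ cancel, giving $\vert \Lambda_\ii I(a_j,g) \vert \lesssim \|g\|_{CMO^p_\omega}$ uniformly in $j$ and in $\omega$.

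\textbf{Step 3 (Summation).} Since $p \leq 1$, $\sum_j |\lambda_j| \leq \bigl( \sum_j |\lambda_j|^p \bigr)^{1/p}$, so
$$ \vert \Lambda_\ii I(f,g) \vert \lesssim \|g\|_{CMO^p_\omega} \Big( \sum_j |\lambda_j|^p \Big)^{1/p} \lesssim \|g\|_{CMO^p_\omega} \cdot \|f\|_{H^p_\omega}, $$
with constant independent of $\omega$.

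\textbf{Main obstacle.} The delicate step is Step~1: one needs an atomic decomposition whose atoms are controlled through $\|S a_j\|_{L^2(\omega)}$ (rather than through, say, $\|a_j\|_{L^2(\omega)}$), because without such control the Cauchy--Schwarz step would force a weighted Littlewood--Paley estimate $\|S a_j\|_{L^2(\omega)} \lesssim \|a_j\|_{L^2(\omega)}$ whose constant depends on $\omega$. This is precisely the decomposition built in Subsection~\ref{sec:atomic-dec} through a stopping time based on the localised square function $S_Q$, and its existence is what makes the bound independent of the weight. A secondary technical point is handling the contribution to the first factor coming from indices $I \not\subseteq Q_j$, which is either absent (e.g.\ in the Haar case, by strict frequency localisation of the atoms) or absorbed via the tail estimates implicit in the energy condition \eqref{eq:energie} together with a dilation of $Q_j$ and Proposition~\ref{prop:enlarge_sparse}.
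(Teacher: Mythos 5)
Your Step 2 is exactly the paper's opening move: the same weighted splitting $|a_I(f)|\,(\omega(I)/|I|)^{1/2}\cdot |a_I(g)|\,(|I|/\omega(I))^{1/2}$ followed by Cauchy--Schwarz, which produces the factor $\omega(I_0)^{-1/2}\|S_{I_0}f\|_{L^2(\omega)}$ against $\|g\|_{CMO_\omega^p}\cdot\omega(I_0)^{1/p}$. The divergence, and the gap, is in how this local estimate is globalised. Your Step 1 invokes a \emph{weighted} atomic decomposition of $H^p_\omega$ with $\|Sa_j\|_{L^2(\omega)}\lesssim\omega(Q_j)^{1/2-1/p}$ and $\sum_j|\lambda_j|^p\lesssim\|f\|_{H^p_\omega}^p$ \emph{uniformly in the weight}, for an arbitrary $\omega$ (no $A_\infty$ hypothesis). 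This is not available: Subsection \ref{sec:atomic-dec} constructs only the unweighted decomposition, in the Haar model, and it appears in the paper \emph{after} this proposition precisely because it is a further application of the same machinery, not an ingredient of it. Establishing the weighted version with $\omega$-independent constants is essentially equivalent to the proposition: one must run a stopping time on $\omega$-averages of the localised square function in $L^r(\omega)$ with $r<p$, invoke a weighted John--Nirenberg inequality to pass from the $L^2(\omega)$ normalisation of the atoms to the $L^r(\omega)$ one, and then sum $\sum_j|\lambda_j|^p$ using the $L^{p/r}(\omega)$-boundedness of $\ic M_\omega$ (uniform in $\omega$) together with the disjoint major subsets of the resulting $\omega$-sparse family. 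As written, your proof defers all of the weight-uniformity --- which is the entire content of the statement --- to an unproven lemma of the same depth.

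The paper bypasses atoms altogether: after the local Cauchy--Schwarz estimate it applies weighted John--Nirenberg to replace the $L^2(\omega)$ average of the localised square function by an $L^r(\omega)$ average with $r<p$, runs the stopping time directly on $f$ and $g$ to produce an $\omega$-sparse collection $\mcS$ with the associated subcollections $\ii I_Q$, and concludes via $\ic M_{r,\omega}$ and the disjoint sets $E(Q)$; in effect it partitions the index set $\ii I=\bigcup_{Q\in\mcS}\ii I_Q$ rather than decomposing the function $f$. Your ``secondary technical point'' is also more serious than you suggest in the general framework \eqref{eq:energie}--\eqref{pr:size}: the coefficients $a_I(a_j)$ need not vanish for $I\not\subseteq Q_j$, so the Cauchy--Schwarz of Step 2 forces the second factor to range over all $I$ with $a_I(a_j)\neq 0$, and the bound by $\|g\|_{CMO_\omega^p}$ applied at $I_0=Q_j$ no longer holds verbatim; Proposition \ref{prop:enlarge_sparse} concerns $L^1$ averages over dilated cubes and does not directly absorb a $CMO_\omega^p$-type quantity. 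The natural repair --- building atoms from the groups $\ii I_{Q_j}$ selected by a stopping time so that only $I\in\ii I(Q_j)$ contribute --- reconstructs precisely the argument the paper gives.
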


\begin{proof} On suit la m\^eme approche que celle de la sous-section pr\'ec\'edente. On a d'abord une estimation \`a poids localis\'ee. Si $I_0$ est un intervalle fix\'e alors
\begin{align*}
 \vert \Lambda_{I_0} (f,g) \vert& \lesssim  \sum_{I \subset I} \vert a_I(f)\vert \cdot \vert a_I(g)\vert=\sum_{I \subseteq I_0} \vert a_I(f)\vert \frac{\omega(I)^{\frac{1}{2}}}{\vert I \vert} \cdot \vert a_I(g) \vert \frac{\vert I \vert^{\frac{1}{2}}}{\omega(I)^{\frac{1}{2}}} \\
              &\lesssim \frac{1}{\omega(I_0)^{\frac{1}{2}}} \big\|  \big( \sum_{I \subseteq I_0} \vert a_I(f) \vert^2 \cdot \frac{\one_I}{\vert I \vert}  \big)^{\frac{1}{2}} \big\|_{L^2\left(\omega \right)} \cdot \frac{1}{\omega(I_0)^\frac{1}{p}} \Big(\omega(I_0) \sum_{I \subseteq I_0} \vert a_I(f)\vert^2 \frac{\vert I \vert}{\omega(I)} \Big)^{1/2} \cdot \omega(I_0)^\frac{1}{p}\\
              & \lesssim \frac{1}{\omega(I_0)^{\frac{1}{2}}} \big\|  \big( \sum_{I \subseteq I_0} \vert a_I(f) \vert^2 \cdot \frac{\one_I}{\vert I \vert}  \big)^{\frac{1}{2}} \big\|_{L^2\left(\omega \right)} \cdot \|g\|_{CMO_\omega^p} \cdot \omega(I_0)^{\frac{1}{p}} \\
              &\lesssim \sup_{\tilde I \subseteq I_0} \frac{1}{\omega(\tilde I)^{\frac{1}{2}}} \big\|  \big( \sum_{I \subseteq  \tilde I} \vert a_I(f) \vert^2 \cdot \frac{\one_I}{\vert I \vert}  \big)^{\frac{1}{2}} \big\|_{L^2\left(\omega \right)}  \cdot \|g\|_{CMO_\omega^p}\cdot \omega(I_0)^{\frac{1}{p}}. 
\end{align*}
Gr\^ace au th\'eor\`eme de John-Nirenberg, qui reste vrai m\^eme si on ajoute un poids (la preuve et identique \`a celui de Th\'eor\`eme 2.7 du \cite{multilinear_harmonic}), on obtient
\begin{align*}
 \vert \Lambda_{I_0} (f,g) \vert& \lesssim \sup_{ \tilde I} \frac{1}{\omega(\tilde I)^{\frac{1}{p}}} \big\|  \big( \sum_{I \subseteq \tilde I} \vert a_I(f) \vert^2 \cdot \frac{\one_I}{\vert I \vert}  \big)^{\frac{1}{2}} \big\|_{L^p\left(\omega \right)}  \cdot \|g\|_{CMO_\omega^p}\cdot \omega(I_0)^{\frac{1}{p}}. 
\end{align*}
D\`es qu'on a ces estimations localis\'es, on peut refaire le m\^eme temps d'arr\^et pour obtenir une domination $\omega$-\'eparse (\'eparse relative \`a la mesure $\omega dx$). En effet, la condition dans le temps d'arr\^et pour choisir $\ii I_{Q_0}$ est donn\'ee par
\begin{equation*}
\frac{1}{\omega\left( I' \right)^{\frac{1}{r}}} \big\|  \big( \sum_{\substack{ I \subseteq I' \\ I \in \ii I_{Stock}}} \vert a_I(f) \vert^2  \frac{1_I}{\vert I \vert}   \big)^{\frac{1}{2}} \big\|_{L^r\left(\omega \right)} \leq C \cdot \frac{1}{\omega\left( Q_0\right)^{\frac{1}{r}}} \big\|  \big( \sum_{\substack{ I \subseteq Q_0 \\ I \in \ii I_{Stock}}} \vert a_I(f) \vert^2  \frac{1_I}{\vert I \vert}   \big)^{\frac{1}{2}} \big\|_{L^r\left(\omega \right)},
\end{equation*}
avec un $r<p$ qui sera choisi ult\'erieurement.

L'estimation $\omega$-\'eparse qu'on obtient \`a la fin est 
\begin{align*}
\vert \Lambda_{\ii I}(f, g) \vert \lesssim \sum_{Q \in \ic S} \frac{1}{\omega\left( Q \right)^{\frac{1}{r}}} \big\|  \big( \sum_{I \in \ii I_Q} \vert a_I(f) \vert^2  \frac{1_I}{\vert I \vert}   \big)^{\frac{1}{2}} \big\|_{L^r\left(\omega \right)} \cdot \big\| g \big\|_{CMO_\omega^p} \cdot \omega(Q)^{\frac{1}{p}}.
\end{align*}
On fait alors appara\^itre l'op\'erateur maximal \`a poids $\ic M_{r, \omega}$ d\'efini par 
\[
\ic M_{r, \omega} f(x):= \left( \ic M_\omega \vert f \vert^r  \right)^{1/r},
\]
qui est born\'e sur $L^p(\omega)$ uniform\'ement en fonction du poids $\omega$, car $\ic M_\omega$ est born\'e sur $L^{p/r}(\omega)$ (voir \cite[Th\'eor\`eme 15.1]{LernerNazarov} ou \cite{BernicotFreyPetermichl}).

En utilisant que la collection $\ic S$ est $\omega$-\'eparse, pour chaque $Q \in \ic S$ on peut trouver $E(Q) \subseteq Q$ telle que $\omega(E(Q)) > \frac{1}{2} \omega(Q)$ et tels que les ensembles $\lbrace E(Q)  \rbrace_{Q \in \ic S}$ sont deux \`a deux disjoints. Avec ces observations, on peut estimer la forme bilin\'eaire par
\begin{align*}
\vert \Lambda_{\ii I}(f, g) \vert & \lesssim \inf_{y \in E(Q)} \ic M_{r, \omega} \big( \sum_{I \in \ii I_Q} \vert a_I(f) \vert^2  \frac{1_I}{\vert I \vert}   \big)^{\frac{1}{2}}(y) \cdot \| g \|_{CMO_\omega^p} \cdot \omega(E(Q))^{\frac{1}{p}} \\
& \lesssim  \big(\int_{\rr R} \vert \ic M_{r, \omega} \big( \big( \sum_{I \in \ii I_Q} \vert a_I(f) \vert^2  \frac{1_I}{\vert I \vert}   \big)^{\frac{1}{2}}  \big)  \vert^p \omega(x) dx\big)^{\frac{1}{p}} \cdot \| g \|_{CMO_\omega^p} \\
&\lesssim \| Sf \|_{L^p\left( \omega \right)} \cdot \| g \|_{CMO_\omega^p}.
\end{align*}
\end{proof}

\begin{remark}
On observe que le temps d'arr\^et qu'on a utilise est vraiment similaire \`a celui qui sera utilis\'e ult\'erieurement dans la sous-section \ref{sec:atomic-dec}. La condition pour le temps d'arr\^et et toujours donn\'ee par une inspection de la moyenne de la fonction carr\'ee dans $L^r$, avec $r<p$.   
\end{remark}

D'apr\`es \cite{Lee-Lin-Lin}, $CMO_\omega^p$ est l'\'espace dual de $H^p_\omega$, de mani\`ere ind\'ependante du poids. En fait, les \'enonc\'es dans \cite{Lee-Lin-Lin} sont d\'ecrits avec un poids $\omega\in A_\infty$ car l'espace de Hardy est d\'efini en termes d'une fonction maximale. Si on choisit (comme c'est le cas ici) la norme de l'espace de Hardy d\'efinie en terme de la fonction carr\'ee alors, les r\'esultats de \cite{Lee-Lin-Lin} d\'ecrivent la dualit\'e de mani\`ere ind\'ependante du poids.
On obtient alors le r\'esultat suivant:

\begin{theorem} \label{thm:hardy} Soit $T$ un op\'erateur v\'erifiant les hypoth\`eses \eqref{eq:energie} et \eqref{pr:size}. Pour tout exposant $p\in(0,1]$, il existe une constante $C=C(T,p)$ telle que pour tout poids $\omega$, $T$ est continu sur $H^p_\omega$ avec
$$ \|T\|_{H^p_\omega \to H^p_\omega} \lesssim C.$$
\end{theorem}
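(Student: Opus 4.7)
Le plan est d'obtenir la bornitude de $T$ sur $H^p_\omega$ par dualité à partir de l'estimation bilinéaire de la Proposition \ref{prop:Hardy}. Plus précisément, comme $CMO^p_\omega$ est l'espace dual de $H^p_\omega$ (indépendamment du poids $\omega$, d'après \cite{Lee-Lin-Lin}, dans la normalisation par la fonction carrée choisie ici), pour montrer $\|Tf\|_{H^p_\omega} \lesssim \|f\|_{H^p_\omega}$ il suffit d'établir
\[
|\langle Tf, g\rangle| \lesssim \|f\|_{H^p_\omega} \cdot \|g\|_{CMO^p_\omega}
\]
avec une constante indépendante de $\omega$.

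D'abord, on fixe $f \in H^p_\omega$ et $g \in CMO^p_\omega$ (qu'on peut supposer suffisamment réguliers pour que l'évaluation de $\langle Tf, g \rangle$ ait un sens, et on obtiendra le résultat général par densité). Par l'hypothèse de décomposition sur $T$, on a $|\langle Tf, g \rangle| \lesssim \sum_{I \in \Dy^0} |a_I(f)| \cdot |a_I(g)|$. Grâce à la condition d'énergie \eqref{eq:energie}, la série est convergente, et pour tout $\varepsilon > 0$ il existe une sous-collection finie $\ii I \subset \Dy^0$ (dépendant de $f$, $g$, et $\varepsilon$) telle que
\[
|\langle Tf, g \rangle| \leq 2|\Lambda_{\ii I}(f, g)| + \varepsilon,
\]
exactement comme dans la remarque qui précède la Proposition \ref{prop:sparse-Hilbert-trans}. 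On est donc ramené à contrôler $|\Lambda_{\ii I}(f,g)|$ pour des collections finies.

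Ensuite, on applique directement la Proposition \ref{prop:Hardy}: pour toute telle collection finie $\ii I$ on a
\[
|\Lambda_{\ii I}(f,g)| \lesssim \|f\|_{H^p_\omega} \cdot \|g\|_{CMO^p_\omega},
\]
avec une constante qui ne dépend ni de $\ii I$, ni des fonctions, ni du poids. En faisant tendre $\varepsilon \to 0$, on en déduit
\[
|\langle Tf, g \rangle| \lesssim \|f\|_{H^p_\omega} \cdot \|g\|_{CMO^p_\omega}.
\]

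La conclusion vient alors de la dualité $H^p_\omega = (CMO^p_\omega)^*$: en prenant le supremum sur toutes les fonctions $g$ dans la boule unité de $CMO^p_\omega$, on obtient $\|Tf\|_{H^p_\omega} \lesssim \|f\|_{H^p_\omega}$ avec une constante indépendante de $\omega$. L'obstacle principal n'est pas ici dans cette preuve, mais a été surmonté dans la Proposition \ref{prop:Hardy} via le théorème de John-Nirenberg à poids et la domination $\omega$-éparse bâtie grâce au temps d'arrêt sur les normes $L^r(\omega)$ de la fonction carrée localisée. Il faut aussi prendre soin de vérifier que l'identification duale est bien valable dans la version « fonction carrée » de $H^p_\omega$ employée ici, ce qui résulte d'une relecture standard des arguments de \cite{Lee-Lin-Lin}.
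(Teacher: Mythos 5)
Your argument coincides with the paper's own: Theorem \ref{thm:hardy} is deduced there precisely by combining the weight-uniform bilinear estimate of Proposition \ref{prop:Hardy} with the duality between $H^p_\omega$ and $CMO^p_\omega$ from \cite{Lee-Lin-Lin} (in the square-function normalization, so that the identification is independent of the weight). You are in fact slightly more explicit than the paper about the reduction to finite collections via the energy condition \eqref{eq:energie} and about the final density/supremum step, but the route is the same.
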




\section{Application \`a la transform\'ee de Riesz et au projecteur de Leray}
\label{sec:leray}

Sur l'espace Euclidien ${\rr R}^n$, consid\'erons l'op\'erateur du second ordre $L=-\dive A \nabla$, avec les hypoth\`eses suivantes:

\begin{assum-L}
\begin{itemize}
\item[$\bullet$] Soit $A = A(x)$ une application \`a valeurs matricielles complexes, d\'efinie sur ${\rr R}^n$ et v\'erifiant une condition d'accr\'etivit\'e (ellipticit\'e)
\begin{equation} \lambda_1 |\xi|^ 2 \leq  \Re \langle A(x) \xi, \xi \rangle \qquad \textrm{and} \qquad  |\langle A(x)\xi , \zeta\rangle | \leq \lambda_2 |\xi||\zeta|, \label{eq:ell} \end{equation}
pour des constantes num\'eriques $\lambda_1,\lambda_2>0$ et tout $x\in {\rr R}^n$, $\xi,\zeta\in {\rr R}^n$. On y associe alors l'op\'erateur du second ordre d\'efini par
$$ L=L_A f :=- \dive (A\nabla \cdot ).$$
On sait que $L$ est un op\'erateur injectif, sectoriel et maximal accretif sur $L^2$, et donc admet un calcul fonctionnel holomorphe born\'e $H^\infty$ sur $L^2$. De plus $-L$ est le g\'en\'erateur d'un semi-groupe $(e^{-tL})_{t>0}$ sur $L^2$, qui v\'erifie des estimations de Davies-Gaffney $L^2$-$L^2$ (ainsi que $ \sqrt{t} e^{-tL} \dive$).

\item[$\bullet$] On suppose que le semi-groupe $e^{-tL}$ a une repr\'esentation int\'egrale par un noyau $K_t$ tel que ce noyau et son gradient v\'erifient des estimations Gaussiennes ponctuelles: pour tout $t>0$, 
\begin{equation} \left|K_t(x,y)\right| +  \sqrt{t} \left| \nabla_{x,y} K_t(x,y)\right| \lesssim |B(x,\sqrt{t})|^{-1} e^{-c \frac{|x-y|^2}{t} }
\label{eq:gaussian}
\end{equation}
avec des constantes implicites uniformes en $t>0$ et $x,y\in {\rr R}^n$.
\end{itemize}
\end{assum-L}

\begin{remark} Tous les r\'esultats de cette section pourraient \^etre \'ecrits dans le contexte d'une vari\'et\'e Riemannienne doublante non-born\'ee $(M,g)$ munie de son semi-groupe de la chaleur $(e^{-t\Delta})_{t>0}$ (o\`u $\Delta$ est l'op\'erateur positif de Laplace), sous l'hypoth\`ese que le semi-groupe et son gradient admettent une repr\'esentation int\'egrale avec un noyau, v\'erifiant des estimations ponctuelles Gaussiennes.
\end{remark}

Dans un tel cadre (ou celui d'une vari\'et\'e Riemanienne avec son semi-groupe de la chaleur), on d\'efinit la transform\'ee de Riesz $R_L:=\nabla L^{-1/2}$ et le projecteur de Leray $ \pi_L(f) := A \nabla L^{-1} \dive$.
Nous allons nous int\'eresser \`a l'\'etude de ces op\'erateurs et utiliser les r\'esultats pr\'ec\'edents sur le contr\^ole \'epars pour obtenir de nouveaux r\'esultats sur les bornitudes de ces op\'erateurs.

\medskip

Pr\'ecisons ici que le projecteur de Leray $\pi_L$  ou le dual de la transform\'ee de Riesz $R_L^*$ agissent sur un espace de champs de vecteurs. Pour les espaces de Lebesgue et un poids $\omega$, on doit donc distinguer l'espace $L^p_\omega= L^p_\omega({\rr R}^n; {\rr R})$ de fonctions et $L^p_\omega=L^p_\omega({\rr R}^n;{\rr R}^n)$ l'espace de champs de vecteurs, o\`u les deux espaces $L^p$ sont d\'efinis par la norme
$$ \left(\int_{{\rr R}^n} |f(x)|^p \, \omega(x) dx \right)^{1/p}$$
avec $|\cdot|$ \'etant le module ou une norme de ${\rr R}^n$ (selon que $f$ est une fonction ou un champ de vecteur. \\
Par simplicit\'e, on gardera la m\^eme notation $L^p$ de ces espaces, ainsi par exemple
$$ \| \pi_L\|_{L^p \to L^p} := \| \pi_L\|_{L^p_\omega({\rr R}^n;{\rr R}^n) \to L^p_\omega({\rr R}^n; {\rr R})}. $$
Pour diff\'erentier, nous allons prendre dans cette section, la notation suivante: une lettre majuscule (resp. minuscule) pour un champ de vecteurs (resp. fonction) et alors implicitement sa norme $L^p_\omega$ correspond \`a l'espace $L^p_\omega=L^p_\omega({\rr R}^n;{\rr R}^n)$ (resp. $=L^p_\omega({\rr R}^n;{\rr R})$).

\subsection{Type faible $L^1$ pour le dual de la transform\'ee de Riesz et autres fonctionnelles carr\'ees verticales}

Sous les hypoth\`eses faites sur $L$, les transform\'ees de Riesz $R_L:=\nabla L^{-1/2}$ et $R_{L^*}:=\nabla (L^*)^{-1/2}$ sont continues sur tous les espaces $L^p$ pour $p\in(1,\infty)$ et sont de type faible $L^1$ (voir par exemple \cite{Auscher-memoire}). Par dualit\'e, les op\'erateurs adjoint $R_L^* = (L^*)^{-1/2} \dive$ et $(R_{L^*})^* = L^{-1/2} \dive$ (agissant sur les champs de vecteurs) sont aussi continus sur $L^p$ pour $p\in(1,\infty)$.
Sous les conditions ci-dessus, il est montr\'e dans \cite{BernicotFreyPetermichl} que les transform\'ees de Riesz admettent un contr\^ole \'epars sous la forme suivante:

\begin{proposition}[\cite{BernicotFreyPetermichl}] Pour toutes fonctions $f,g\in L^2$ il existe une collection \'eparse $\mcS$ telle que
$$ \left| \langle R_{L^*}(f), g\rangle \right| + \left| \langle R_L(f), g\rangle \right| \lesssim \sum_{Q\in \mcS} \left(\aver{3Q} |f|\, dx \right) \left(\aver{3Q} |g|\, dx \right) |Q|. $$
\end{proposition}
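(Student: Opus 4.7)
La strat\'egie est d'adapter les constructions modernes de domination \'eparse (Lerner, Lacey, Conde-Alonso--Culiuc--Di Plinio--Ou) au cadre de la transform\'ee de Riesz associ\'ee \`a $L$, en exploitant les estimations Gaussiennes ponctuelles (\ref{eq:gaussian}) \`a la fois sur $K_t$ et sur $\nabla_{x,y}K_t$. La premi\`ere \'etape consiste \`a repr\'esenter $L^{-1/2}$ par la formule de subordination
\[
L^{-1/2} = \frac{1}{\sqrt{\pi}} \int_0^\infty e^{-tL} \frac{dt}{\sqrt{t}},
\]
de sorte que le noyau $k$ de $R_L = \nabla L^{-1/2}$ s'\'ecrit $k(x,y) = \frac{1}{\sqrt{\pi}}\int_0^\infty \nabla_x K_t(x,y) \frac{dt}{\sqrt{t}}$. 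Les estimations Gaussiennes sur $\nabla_{x,y}K_t$ permettent alors (par int\'egration en $t$) d'\'etablir \`a la fois la borne de taille $|k(x,y)| \lesssim |x-y|^{-n}$ et une r\'egularit\'e h\"old\'erienne dans les deux variables: pour $2|z|<|x-y|$, on a $|k(x+z,y)-k(x,y)| + |k(x,y+z)-k(x,y)| \lesssim |z|/|x-y|^{n+1}$, en contr\^olant $\nabla_x \nabla_y K_t$ via la composition de deux estimations Gaussiennes sur $\sqrt{t}\nabla e^{-tL/2}$ et $\sqrt{t}\nabla e^{-tL/2}$ au temps $t/2$. Ainsi $R_L$ (et sym\'etriquement $R_{L^*}$) se comporte comme un op\'erateur de Calder\'on-Zygmund.

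La deuxi\`eme \'etape est la construction de la collection \'eparse par temps d'arr\^et. Fixons $f, g \in L^2$ \`a support compact, et un cube dyadique $Q_0$ contenant les supports. On proc\`ede r\'ecursivement: pour $Q\in \mcS$ d\'ej\`a s\'electionn\'e, on d\'ecompose
\[
R_L f = R_L(f \one_{3Q}) + R_L(f \one_{(3Q)^c}),
\]
et on consid\`ere la contribution sur $Q$. La partie de ``queue'' $R_L(f\one_{(3Q)^c})$ est quasi-constante sur $Q$ gr\^ace \`a la r\'egularit\'e h\"old\'erienne du noyau, et v\'erifie ponctuellement sur $Q$
\[
\bigl|R_L(f \one_{(3Q)^c})(x)\bigr| \lesssim \sum_{k\geq 1} 2^{-k\epsilon} \aver{2^k Q} |f|\, dy,
\]
ce qui contribue au terme $(\aver{3Q}|f|)(\aver{3Q}|g|)|Q|$ apr\`es absorption des facteurs $2^{-k\epsilon}$ via la Proposition \ref{prop:enlarge_sparse}. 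Pour la partie localis\'ee $R_L(f \one_{3Q})$, on d\'efinit les descendants directs $ch_{\mcS}(Q)$ comme les sous-cubes maximaux $P \subset Q$ o\`u l'un des op\'erateurs maximaux
\[
\ic M(f \one_{3Q})(x), \quad \ic M_L^*(f \one_{3Q})(x), \quad \ic M(g \one_{3Q})(x)
\]
exc\`ede $C$ fois sa moyenne sur $3Q$, o\`u $\ic M_L^*$ est un op\'erateur maximal tronqu\'e appropri\'e (par exemple celui de Lerner). La bornitude faible $L^1$ de $R_L$ et de ces fonctions maximales assure $|\bigcup_{P \in ch_{\mcS}(Q)} P| \leq \frac{1}{2}|Q|$ pour $C$ assez grand, donc le caract\`ere \'eparse.

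La partie d\'elicate et qui constitue le coeur technique est la {\it grand maximal truncation}: il faut v\'erifier que sur $Q \setminus \bigcup_{P\in ch_{\mcS}(Q)} P$, l'op\'erateur $R_L(f \one_{3Q})$ est effectivement contr\^ol\'e par $(\aver{3Q}|f|)(\aver{3Q}|g|)|Q|$. C'est ici qu'interviennent de fa\c{c}on essentielle la bornitude faible $L^1$ de $R_L$ (qui provient des estimations Gaussiennes sur $\nabla K_t$) et les estimations $L^2$-$L^2$ off-diagonal pour $\sqrt{t}\nabla e^{-tL}$ (qui d\'ecoulent de Davies-Gaffney). L'apparition du facteur $3$ dans $3Q$ est exactement due \`a cette troncature du noyau au voisinage du cube $Q$: on coupe la variable int\'egrale $t$ \`a l'\'echelle $\ell(Q)^2$ pour s\'eparer les parties ``proche'' et ``lointaine'', et la partie proche \'eclabousse l\'eg\`erement hors de $Q$, ce qu'on absorbe en \'elargissant $Q$ en $3Q$. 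Enfin, pour $R_{L^*}^*$ la preuve est identique par sym\'etrie en rempla\c{c}ant $L$ par $L^*$.
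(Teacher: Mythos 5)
Votre proposition repose sur l'affirmation que, sous l'hypoth\`ese \eqref{eq:gaussian}, le noyau $k(x,y)=\tfrac{1}{\sqrt\pi}\int_0^\infty \nabla_x K_t(x,y)\,t^{-1/2}\,dt$ de $R_L$ est h\"old\'erien \emph{dans les deux variables}, de sorte que $R_L$ serait un op\'erateur de Calder\'on--Zygmund. C'est l\`a que se situe la lacune. L'astuce de composition $e^{-tL}=e^{-tL/2}\circ e^{-tL/2}$ ne fournit qu'une borne Gaussienne sur la d\'eriv\'ee \emph{crois\'ee} $\nabla_x\nabla_y K_t$, donc la r\'egularit\'e de $k$ en la \emph{seconde} variable seulement. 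La r\'egularit\'e en la premi\`ere variable $x$ exigerait une borne ponctuelle sur $\nabla_x^2K_t$, c'est-\`a-dire sur les d\'eriv\'ees secondes du noyau de la chaleur, qui n'est pas suppos\'ee et qui est fausse en g\'en\'eral pour une matrice $A$ simplement mesurable, born\'ee et elliptique. Or c'est pr\'ecis\'ement cette r\'egularit\'e en $x$ que votre deuxi\`eme \'etape utilise: l'estimation de queue $|R_L(f\one_{(3Q)^c})(x)|\lesssim\sum_k 2^{-k\epsilon}\aver{2^kQ}|f|\,dy$ (le facteur $2^{-k\epsilon}$ ne peut provenir que d'une comparaison $k(x,y)-k(x_Q,y)$ pour $x\in Q$; la seule borne de taille donne une somme sans d\'ecroissance) et le contr\^ole de la troncature maximale $\sup_{Q\ni x}\|R_L(f\one_{(3Q)^c})\|_{L^\infty(Q)}$ reposent tous deux sur $|k(x+z,y)-k(x,y)|$. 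Le pr\'esent papier le dit d'ailleurs explicitement plus loin: ``les transform\'ees de Riesz ne sont pas des op\'erateurs de Calder\'on-Zygmund''. Appliquer la machinerie \`a $R_L^*$ (dont le noyau est r\'egulier dans la bonne variable pour la troncature) ne sauve pas l'argument, car il faudrait alors disposer du type faible $(1,1)$ de $R_L^*$, qui est justement l'une des cons\'equences nouvelles tir\'ees de la domination \'eparse (Proposition \ref{prop:dualriesz}) et non un point de d\'epart.

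Notez que la proposition est import\'ee telle quelle de \cite{BernicotFreyPetermichl}, dont toute la d\'emarche consiste \`a remplacer la r\'egularit\'e ponctuelle du noyau par une r\'egularisation au moyen du semi-groupe: \`a l'\'echelle $r=\ell(Q)$ on d\'ecompose $R_L=R_L(I-e^{-r^2L})^{N}+R_L\big(I-(I-e^{-r^2L})^{N}\big)$, on exploite les estimations hors-diagonale $L^2$--$L^2$ de Davies--Gaffney pour $\sqrt{t}\,\nabla e^{-tL}$ ainsi que les bornes locales $L^1\to L^2$ issues de \eqref{eq:gaussian} pour obtenir des estimations localis\'ees de la forme bilin\'eaire avec des moyennes $L^1$, puis on ex\'ecute le temps d'arr\^et directement sur la forme bilin\'eaire. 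C'est cette substitution (semi-groupe au lieu de condition de H\"ormander en $x$) qui constitue le coeur de la preuve et qui manque dans votre proposition; le reste de votre sch\'ema (temps d'arr\^et par fonctions maximales, \'elargissement en $3Q$, absorption des queues via la Proposition \ref{prop:enlarge_sparse}) est, lui, conforme \`a l'esprit de la construction.
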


En appliquant le Corollaire \ref{coro:typefaible}, on obtient donc le r\'esultat suivant:

\begin{proposition} \label{prop:dualriesz} Sous les hypoth\`eses faites sur $L$, les op\'erateurs adjoints des transform\'ees de Riesz $R_L^*$ et $ (R_{L^*})^*$ sont de type faible $L^1$: pour tout champ de vecteur $F\in L^1$
$$ \|(L^*)^{-1/2} \dive F\|_{L^{1,\infty}} + \|(L)^{-1/2} \dive F\|_{L^{1,\infty}} \lesssim \|F\|_{L^1}.$$
\end{proposition}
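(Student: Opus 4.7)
Le plan est d'invoquer directement le Corollaire \ref{coro:typefaible} appliqu\'e aux op\'erateurs $R_L$ et $R_{L^*}$. La proposition pr\'ec\'edente, extraite de \cite{BernicotFreyPetermichl}, affirme que ces deux op\'erateurs admettent un contr\^ole par une forme bilin\'eaire \'eparse. Puisque le contr\^ole est sym\'etrique en $f$ et $g$, il est \'equivalent \`a un contr\^ole \'epars pour $R_L^*$ et $(R_{L^*})^*$ au sens de la Proposition \ref{prop:typefaible}. Le Corollaire \ref{coro:typefaible} entra\^ine alors imm\'ediatement que $R_L^* = (L^*)^{-1/2} \dive$ et $(R_{L^*})^* = L^{-1/2} \dive$ sont de type faible $L^1$ sur les champs de vecteurs, ce qui est pr\'ecis\'ement l'\'enonc\'e \`a d\'emontrer.

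La seule subtilit\'e technique tient au fait que la domination \'eparse invoqu\'ee ci-dessus fait intervenir des moyennes sur $3Q$ et non sur $Q$. On y rem\'edierait en appliquant la Proposition \ref{prop:enlarge_sparse} avec une fonction $\rho$ continue bien choisie, concentr\'ee autour de $2^{-2}$ (car $3Q \subset 4Q$). Cela fournit une collection \'eparse agrandie $\widetilde{\mcS}$ et permet de remplacer le contr\^ole par moyennes dilat\'ees par un contr\^ole par moyennes sur les cubes eux-m\^emes, de sorte que l'hypoth\`ese exacte du Corollaire \ref{coro:typefaible} soit satisfaite.

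La difficult\'e principale n'est donc pas dans cette d\'eduction, mais se trouve en amont: c'est l'obtention de la domination \'eparse elle-m\^eme pour les transform\'ees de Riesz $R_L$ et $R_{L^*}$ associ\'ees \`a $L$. Cette \'etape, d\'ej\`a trait\'ee dans \cite{BernicotFreyPetermichl}, repose de mani\`ere essentielle sur les estimations Gaussiennes ponctuelles \eqref{eq:gaussian} pour le noyau du semi-groupe et son gradient, qui permettent de mener un argument \`a la Calder\'on-Zygmund pour un op\'erateur qui n'a pas \`a proprement parler un noyau r\'egulier au sens classique. Une fois cette domination acquise, la Proposition \ref{prop:dualriesz} est une cons\'equence formelle des r\'esultats g\'en\'eraux \'etablis plus haut dans l'article, et illustre le principe annonc\'e dans l'introduction: un contr\^ole \'epars est auto-adjoint, et fournit donc ``gratuitement'' des estimations de type faible $L^1$ pour l'op\'erateur dual.
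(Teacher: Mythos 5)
Votre d\'emonstration co\"incide avec celle du papier: la Proposition \ref{prop:dualriesz} y est obtenue exactement en appliquant le Corollaire \ref{coro:typefaible} \`a la domination \'eparse des transform\'ees de Riesz \'etablie dans \cite{BernicotFreyPetermichl}, le caract\`ere auto-adjoint de la forme \'eparse donnant le type faible $L^1$ des adjoints. Votre remarque sur le passage des moyennes sur $3Q$ aux moyennes sur $Q$ via la Proposition \ref{prop:enlarge_sparse} est un d\'etail que le papier laisse implicite mais qui est bien le bon.
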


On obtient aussi le m\^eme r\'esultat pour les fonctions carr\'e associ\'ees: 
\begin{proposition} \label{prop:dualcarre} Sous les hypoth\`eses faites sur $L$, on fixe $Q_t(L)=(tL)^Ne^{-tL}$ pour un certain $N>1/2$. Alors les fonctions carr\'e
$$ \left(\int_0^\infty \left| \sqrt{t} Q_t(L) \dive(\cdot) \right|^2 \, \frac{dt}{t} \right)^{1/2} \quad \textrm{et} \quad \left(\int_0^\infty \left| \sqrt{t} Q_t(L^*) \dive(\cdot) \right|^2 \, \frac{dt}{t} \right)^{1/2}$$
sont de type faible $L^1$ et born\'ees sur $L^p$ pour $p\in(1,\infty)$.
\end{proposition}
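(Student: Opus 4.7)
On suivrait la m\^eme philosophie que pour la Proposition~\ref{prop:dualriesz}: lin\'eariser la fonction carr\'ee par dualit\'e vectorielle, \'etablir une domination \'eparse pour l'op\'erateur dualis\'e, puis conclure par le Corollaire~\ref{coro:typefaible}. Par sym\'etrie, il suffirait de traiter la premi\`ere fonction carr\'ee, le cas de $L^*$ s'en d\'eduisant en \'echangeant les r\^oles.

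On consid\`ererait l'op\'erateur $\ic T F := (\sqrt{t}\,Q_t(L)\dive F)_{t>0}$ \`a valeurs dans $L^2(dt/t)$, dont la norme ponctuelle co\"incide avec la fonction carr\'ee \'etudi\'ee. Son adjoint par rapport au couplage $L^2(dx;L^2(dt/t))$ s'\'ecrit, apr\`es int\'egration par parties en $x$,
\[
\ic T^* H = -\int_0^\infty \sqrt{t}\,\nabla Q_t(L^*) H_t\,\frac{dt}{t},
\]
envoyant les fonctions scalaires $L^2(dt/t)$-valu\'ees sur des champs de vecteurs; c'est un analogue ``vertical'' (\`a valeurs Hilbertiennes) de la transform\'ee de Riesz $R_{L^*}$.

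Le point principal consisterait \`a \'etablir pour $\ic T^*$ la domination \'eparse vectorielle
\[
|\langle \ic T^* H, F\rangle| \lesssim \sum_{Q\in\mcS} \Big(\aver{3Q} |H|_{L^2(dt/t)}\,dx\Big)\Big(\aver{3Q} |F|\,dx\Big)\,|Q|,
\]
en adaptant au cadre $L^2(dt/t)$-valu\'e la preuve scalaire du contr\^ole \'epars de $R_{L^*}$ issue de \cite{BernicotFreyPetermichl}. L'hypoth\`ese \eqref{eq:gaussian} fournirait les bornes Gaussiennes ponctuelles sur le noyau de $\sqrt{t}\,\nabla Q_t(L^*)$, la condition $N>1/2$ assurant l'int\'egrabilit\'e en $t$ dans la d\'efinition de $\ic T^*$; l'orthogonalit\'e de Littlewood--Paley assurerait la bornitude $\ic T^*:L^2(dx;L^2(dt/t))\to L^2(dx;{\rr R}^n)$ ainsi que les estimations $L^2$ hors-diagonales requises \`a chaque \'echelle. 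Les temps d'arr\^et et la structure dyadique de \cite{BernicotFreyPetermichl} se transposeraient alors en rempla\c{c}ant syst\'ematiquement $|H|$ par $|H|_{L^2(dt/t)}$.

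Une fois cette domination \'eparse acquise, on invoquerait la version Hilbert-valu\'ee du Corollaire~\ref{coro:typefaible}: la d\'emonstration de la Proposition~\ref{prop:typefaible}, bas\'ee sur la d\'ecomposition de Calder\'on--Zygmund de $F\in L^1$, s'\'etend sans modification lorsque l'op\'erateur prend ses valeurs dans un espace de Hilbert. On obtiendrait ainsi la bornitude $L^1\to L^{1,\infty}(L^2(dt/t))$ de $\ic T$, soit pr\'ecis\'ement l'estimation de type faible $L^1$ recherch\'ee, la bornitude $L^p$ pour $p\in(1,\infty)$ d\'ecoulant \'egalement de cette domination \'eparse. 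Le principal obstacle r\'esiderait dans l'adaptation vectorielle soigneuse de \cite{BernicotFreyPetermichl}: il faut v\'erifier que les estimations hors-diagonales et les arguments de type Calder\'on--Zygmund se transf\`erent au cadre $L^2(dt/t)$-valu\'e, ce qui demeure toutefois une extension technique standard.
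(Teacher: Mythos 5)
Votre plan est correct dans l'esprit --- dualiser, dominer l'op\'erateur dual par une forme \'eparse, puis appliquer l'argument de type faible $L^1$ --- mais il diverge de la preuve du papier au moment cl\'e de la lin\'earisation. Le papier ne passe pas par un op\'erateur \`a valeurs dans $L^2(dt/t)$ : il lin\'earise la fonction carr\'ee par des fonctions de Rademacher, en \'ecrivant sa valeur comme $\E_\omega\big[\,\big|\int_0^\infty \epsilon_t(\omega)\sqrt{t}\,Q_t(L)\dive(F)\,\frac{dt}{t}\big|\,\big]$, ce qui ram\`ene tout \`a une famille d'op\'erateurs \emph{scalaires} dont les duaux $\int_0^\infty \epsilon_t(\omega)\sqrt{t}\,\nabla Q_t(L^*)\,\frac{dt}{t}$ admettent un contr\^ole \'epars directement fourni par \cite{BernicotFreyPetermichl}, avec des constantes uniformes en $\omega$. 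Le point subtil est alors que l'ensemble majeur $E'$ de la preuve de la Proposition \ref{prop:typefaible} ne d\'epend que de $F$ (via $M(F)$) et pas du param\`etre $\omega$, ce qui autorise l'int\'egration en $\omega$ et le th\'eor\`eme de Fubini avant de conclure par la caract\'erisation \eqref{eq:caracterisationL1faible}. Votre route, enti\`erement Hilbert-valu\'ee, \'evite ce param\`etre al\'eatoire mais d\'eplace toute la difficult\'e vers la domination \'eparse vectorielle de $\ic T^*$, que vous affirmez sans la d\'emontrer : c'est le seul vrai point non v\'erifi\'e de votre plan. Cette extension est plausible (les estimations hors-diagonales et les temps d'arr\^et de \cite{BernicotFreyPetermichl} se transportent au cadre $L^2(dt/t)$-valu\'e), mais elle exige de reprendre leur d\'emonstration, alors que l'astuce de Rademacher permet d'utiliser le r\'esultat scalaire comme une bo\^ite noire ; si vous retenez votre approche, il faudrait soit r\'ediger cette adaptation vectorielle, soit la remplacer par la lin\'earisation al\'eatoire du papier, qui est plus \'econome.
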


\begin{proof} La bornitude $L^p$ pour $p\in(1,\infty)$ est bien connue par dualit\'e. On va donc se concentrer sur le type faible $L^1$, qui est nouveau et n\'ecessite une preuve plus compliqu\'ee. On ne traite par sym\'etrie que le cas de l'op\'erateur $L$. On commence par lin\'eariser la fonctions carr\'e, en utilisant $\epsilon_t:[0,1] \to [0,1]$ un bon syst\`eme de fonctions de Rademacher, on sait que
$$ \left(\int_0^\infty \left| \sqrt{t} Q_t(L) \dive(F)  \right|^2 \, \frac{dt}{t} \right)^{1/2} 
= \E_\omega\left[ \left|\int_0^\infty \epsilon_t(\omega) \sqrt{t} Q_t(L) \dive(F) \, \frac{dt}{t} \right| \right]$$
Fixons $F\in L^1$ et un ensemble mesurable $E$ arbitraire, de mesure finie et notons $E'$ le sous-ensemble majeur donn\'e par
$$ E':=\{x\in E,\ M(F)(x)>K|E|^{-1} \}
$$
pour une certaine constante num\'erique $K$.
Alors, on sait que (uniform\'ement en $\omega\in[0,1]$)
$$ \int_{E'} \left|\int_0^\infty \epsilon_t(\omega) \sqrt{t} Q_t(L) \dive(F) \, \frac{dt}{t}\right|\, dx \lesssim \|F\|_{L^1}$$
car on peut appliquer la preuve de la Proposition \ref{prop:typefaible} et le fait que \cite{BernicotFreyPetermichl} s'applique pour obtenir le contr\^ole \'epars de l'op\'erateur dual
$$ \left(\int_0^\infty \epsilon_t(\omega) \sqrt{t} Q_t(L) \dive(\cdot) \, \frac{dt}{t}\right)^{*} = \int_0^\infty \epsilon_t(\omega) \sqrt{t} \nabla Q_t(L^*)  \, \frac{dt}{t}$$
(avec des constantes uniformes en $\omega\in[0,1]$).
En int\'egrant\footnote{On note ici qu'il est tr\`es important que la construction de l'ensemble majeur $E'$ est ind\'ependant du param\`etre $\omega$.} alors par rapport au param\`etre $\omega\in[0,1]$ et par le th\'eor\`eme de Fubini, on obtient que 
$$ \int_{E'} \E_\omega\left[ \left|\int_0^\infty \epsilon_t(\omega) \sqrt{t} Q_t(L) \dive(F) \, \frac{dt}{t} \right| \right] \, dx \lesssim \|F\|_{L^1}, $$
ce qui conclut la preuve de 
$$ \left\| \left(\int_0^\infty \left| \sqrt{t} Q_t(L) \dive(F)  \right|^2 \, \frac{dt}{t} \right)^{1/2} \right\|_{L^{1,\infty}} \lesssim \|F\|_{L^1},$$
de par la caract\'erisation \eqref{eq:caracterisationL1faible} de la norme $L^{1,\infty}$.
\end{proof}

\subsection{\'Etude du projecteur de Leray}

Associ\'e \`a un tel op\'erateur $L$, on peut consid\'erer le {\it projecteur de Leray} $\pi_L$ d\'efini (formellement) par
$$ \pi_L(f) := A \nabla L^{-1} \dive.$$
Cet op\'erateur a \'et\'e introduit pour l'\'etude des \'equations de Navier-Stokes, puisque $\textrm{Id}-\pi_L$ peut \^etre vu comme le projecteur sur les champs de vecteurs \`a divergence nulle. En effet, pour un champ de vecteur $F:{\rr R}^n \rightarrow {\rr R}^n$, on a la d\'ecomposition
$$ F = G+\pi_L(F)$$
avec $G:=F-\pi_L(F)$ et la propri\'et\'e $\pi_L^2=\pi_L$. De plus, par d\'efinition, on a 
$$\dive(G) = \dive(F) - \dive(\pi_L F) =0$$ 
ce qui nous permet de d\'ecomposer un champ de vecteur $F$ comme la somme d'un champ de vecteur \`a divergence nulle $G$ et d'un champ de vecteur de type 'gradient' $\pi_L(F)$ (relatif \`a $A$).

\begin{remark} Il est plus naturel d'\'ecrire les choses en termes de m\'etrique perturb\'ee. Soit $B:=B(x)$ une application \`a valeurs matricielles v\'erifiant une condition d'ellipticit\'e et d\'efinissons $A:=B^* B$. On peut alors d\'efinir les op\'erateurs de gradient et de divergence relatifs \`a $B$: $\nabla_B:=B \nabla $ et $\dive_B:=\dive B^{*}$, de sorte que $\dive_B \nabla_B = -L_A$. Le projecteur de Leray $\pi_L:=\nabla_B L_A^{-1} \dive_B$ permet alors de d\'ecomposer un champ de vecteur, comme la somme d'un champ de vecteur \`a divergence nulle ($\dive_B$) et d'un champ de vecteur de type gradient ($\nabla_B$). \\
Un tel choix d'application $B$ correspond \`a une structure Riemannienne (\'eventuellement non r\'eguli\`ere) sur ${\rr R}^n$. En effet, il existe une \'equivalence entre les deux points de vue: perturber l'op\'erateur Laplacien par une telle application $B$ et perturber la m\'etrique Riemannienne sur l'espace Euclidien de mani\`ere quasi-isom\'etrique (pour plus de d\'etails, voir \cite{Barbatis} et \cite[Section 4]{CoulhonDungey}).
\end{remark}

\medskip

En d\'efinissant la transform\'ee de Riesz $R_L:=\nabla L^{-1/2}$, on observe que
$$ \pi_L=  A R_L \left(R_{L^*}\right)^*.$$
Donc le projecteur de Leray $\pi_L$ est la composition de deux op\'erateurs singuliers non-int\'egraux. Sous les hypoth\`eses pr\'ec\'edentes, \cite{BernicotFreyPetermichl} montre que les transform\'ees de Riesz $R_L$ et $R_{L^*}$ v\'erifient les estimations \`a poids optimales suivantes: pour tout $p\in(1,\infty)$ et tout poids $\omega \in A_p$ alors
$$ \| R_L \|_{L^p_\omega \to L^p_\omega} + \| R_{L^*} \|_{L^p_\omega \to L^p_\omega} \lesssim |\omega]_{A_p}^{\max\{1,1/(p-1)\}}.$$
Par dualit\'e (avec le poids dual $\sigma:=\omega^{1-p'}$), on en d\'eduit que
\begin{align*}
 \| \left(R_{L^*}\right)^{*} \|_{L^p_\omega \to L^p_\omega} & = \| R_{L^*} \|_{L^{p'}_{\sigma} \to L^{p'}_{\sigma}} \lesssim [\omega^{1-p'}]_{A_{p'}}^{\max\{1,1/(p'-1)\}} \\
 & \lesssim [\omega]_{A_{p}}^{(p'-1)\max\{1,1/(p'-1)\}} \lesssim [\omega]_{A_{p}}^{\max\{1,1/(p-1)\}}.
 \end{align*}
 En composant ces deux estimations, on obtient
 $$ \| \pi_L\|_{L^p_\omega \to L^p_\omega} \lesssim [\omega]_{A_{p}}^{2\max\{1,1/(p-1)\}}.$$

\medskip


La transform\'ee de Riesz est un op\'erateur qui v\'erifie $R_L(1)=0$ donc en suivant la 'philosophie' de la section \ref{sec:T1=0} (la fin sur la composition d'op\'erateurs de Calder\'on-Zygmund), on doit s'attendre \`a pouvoir faire int\'eragir cette cancellation et obtenir de meilleures estimations pour le projecteur de Leray, en tant que composition de deux op\'erateurs singuliers. La proposition suivante montre en effet que m\^eme si les transform\'ees de Riesz ne sont pas des op\'erateurs de Calder\'on-Zygmund, la compos\'ee $\pi_L$ v\'erifie bien de meilleures estimations \`a poids, de mani\`ere analogue \`a la composition de deux op\'erateurs de Calder\'on-Zygmund v\'erifiant la condition $T(1)=0$ (voir la fin de la section \ref{sec:T1=0}):

\begin{proposition} Sous les hypoth\`eses pr\'ec\'edentes, pour tout champs de vecteurs $F,G \in L^2$ alors il existe une collection \'eparse $\mcS$ telle que
$$ \left|\langle \pi_L(F),G\rangle \right| \lesssim \sum_{Q\in \mcS} \left(\aver{Q} |F| \, dx\right) \left(\aver{Q} |G| \, dx\right) |Q|.$$
En cons\'equence, pour tout $p\in(1,\infty)$ et poids $\omega \in A_{p}$, on a
$$ \| \pi_L  \|_{L^p_\omega \to L^p_\omega} \lesssim_\eta [\omega]_{A_{p}}^{\max\{1,1/(p-1)\}}.$$
et $\pi_L$ est de type faible $L^1$.
\end{proposition}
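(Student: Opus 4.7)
The plan is to reduce $\pi_L$ to a genuine Calderón-Zygmund operator (up to an $L^\infty$ multiplier), after which the announced bilinear sparse bound follows from the Lerner--Lacey pointwise sparse domination and the weighted/weak-$L^1$ corollaries follow from machinery already developed in the paper. First, I would factor $\pi_L = A \cdot \widetilde{\pi}_L$ with $\widetilde{\pi}_L := \nabla L^{-1} \dive = R_L (R_{L^*})^*$. Since $A\in L^\infty$, any sparse bilinear estimate for $\widetilde{\pi}_L$ transfers automatically to $\pi_L$ via
$$\left|\langle \pi_L F, G\rangle\right| = \left|\langle \widetilde{\pi}_L F, A^* G\rangle\right| \leq \|A\|_\infty \sum_{Q\in \mcS} \left(\aver{Q}|F|\,dx\right)\left(\aver{Q}|G|\,dx\right)|Q|.$$
Thus the entire problem reduces to proving a sparse bound for $\widetilde{\pi}_L$, and this is where the two cancellations interact: $R_L(\mathbf{1})=0$ on one side, and $((R_{L^*})^*)^*(\mathbf{1}) = R_{L^*}(\mathbf{1}) = 0$ on the other, exactly the ``$T(1)=0=S^*(1)$'' configuration heuristically identified at the end of Section \ref{sec:T1=0}.

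Second, I would establish that $\widetilde{\pi}_L$ admits a classical Calderón-Zygmund kernel. Using $L^{-1} = \int_0^\infty e^{-tL}\,dt$, the kernel is
$$K(x,y) = \int_0^\infty \nabla_x \nabla_y K_t(x,y)\,dt.$$
The key step is a \emph{second-order} pointwise bound, obtained by splitting $e^{-tL} = e^{-(t/2)L} e^{-(t/2)L}$, so that
$$\nabla_x \nabla_y K_t(x,y) = \int \nabla_x K_{t/2}(x,z)\,\nabla_y K_{t/2}(z,y)\,dz,$$
and then applying \eqref{eq:gaussian} to each factor and integrating the product of Gaussians; this yields
$$|\nabla_x \nabla_y K_t(x,y)| \lesssim \frac{1}{t|B(x,\sqrt{t})|}\, e^{-c|x-y|^2/t}.$$
Integrating against $dt$ gives the size bound $|K(x,y)|\lesssim |x-y|^{-n}$.

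Third, I would derive Hölder regularity of $K$ in each variable: this is the principal technical obstacle, because \eqref{eq:gaussian} only supplies first-order gradient bounds. I would combine the Gaussian gradient bound with Davies-Gaffney off-diagonal estimates (or with De Giorgi--Nash--Moser regularity when available) to interpolate and obtain, for some $\alpha\in (0,1]$ and $|h|\leq \sqrt{s}$,
$$|\nabla_y K_s(z,y+h) - \nabla_y K_s(z,y)| \lesssim \left(\frac{|h|}{\sqrt{s}}\right)^\alpha \frac{1}{\sqrt{s}\,|B(z,\sqrt{s})|}\, e^{-c|z-y|^2/s},$$
with the analogous statement in the $x$-variable coming from the same argument applied to $L^*$. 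Inserting this into the semigroup composition identity and integrating in $t$ produces the Hölder regularity of $K(x,y)$ in both arguments with exponent $\alpha$, so that $\widetilde{\pi}_L$ is a bona fide Calderón-Zygmund operator (its $L^2$-boundedness is immediate from the $L^2$-boundedness of both Riesz transforms).

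Fourth and last, the Lerner--Lacey pointwise sparse domination theorem applied to $\widetilde{\pi}_L$ gives a sparse collection $\mcS$ with
$$|\langle \widetilde{\pi}_L F, G\rangle| \lesssim \sum_{Q\in \mcS} \left(\aver{Q}|F|\,dx\right)\left(\aver{Q}|G|\,dx\right)|Q|,$$
and combining with the first step yields the sparse bound announced for $\pi_L$. The optimal weighted estimate $\|\pi_L\|_{L^p_\omega \to L^p_\omega} \lesssim [\omega]_{A_p}^{\max\{1,1/(p-1)\}}$ then follows from the standard sparse-to-$A_p$ mechanism, and the weak-$(1,1)$ bound for $\pi_L$ from Proposition \ref{prop:typefaible} (the self-adjoint nature of the sparse form making the one-sided/adjoint distinction irrelevant). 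The main obstacle in this plan is the Hölder regularity of $\nabla_y K_s$: everything else reduces either to the already-established sparse domination for the Riesz transforms or to standard consequences of the sparse form.
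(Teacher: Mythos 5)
Your plan founders at the third step, and the obstacle is not merely technical: under the paper's hypotheses the H\"older regularity of $\nabla_y K_s(z,\cdot)$ that you need is simply not available, and the paper explicitly emphasises that $\pi_L$ is \emph{not} a Calder\'on--Zygmund operator in this setting. The hypothesis \eqref{eq:gaussian} provides a pointwise Gaussian bound on $K_t$ and on its first gradient, and nothing more. Davies--Gaffney estimates are $L^2$--$L^2$ off-diagonal bounds and carry no extra spatial smoothness; De Giorgi--Nash--Moser gives interior H\"older continuity of \emph{solutions} (hence of $K_t$ itself, and only for real coefficients), never of their gradients. H\"older continuity of $\nabla_y K_s$ would require Schauder-type estimates, i.e.\ H\"older continuous coefficients $A$, which are not assumed; and there is no interpolation that manufactures $|\nabla_y K_s(z,y+h)-\nabla_y K_s(z,y)|\lesssim (|h|/\sqrt{s})^{\alpha}\cdot(\dots)$ out of an $L^\infty$ bound on $\nabla_y K_s$ alone. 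Your steps 1, 2 and 4 are sound as far as they go (the composition $e^{-tL}=e^{-(t/2)L}e^{-(t/2)L}$ does yield $|\nabla_x\nabla_y K_t(x,y)|\lesssim t^{-1}|B(x,\sqrt t)|^{-1}e^{-c|x-y|^2/t}$ and hence the size bound $|K(x,y)|\lesssim |x-y|^{-n}$), but without kernel regularity in at least one variable the Lerner--Lacey pointwise domination does not apply, so the argument has a genuine hole exactly where you flagged ``the principal technical obstacle''.

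The paper avoids this issue entirely. It writes $\langle\pi_L F,G\rangle$ via a Calder\'on reproducing formula as $\int_0^\infty\langle tA\nabla e^{-tL}\dive F,\,t\nabla e^{-tL^*}\dive(A^*G)\rangle\,\frac{dt}{t}$, discretises over a Whitney-type decomposition to obtain coefficients $a_I(F)$ given by local $L^2(I)$ norms of $t\nabla e^{-tL}\dive F$ for $t\sim\ell(I)^2$, and then verifies the abstract size condition \eqref{pr:size} for these coefficients by dominating the associated truncated conical square function by the non-tangential maximal function (good-$\lambda$ inequality, Lemme \ref{Lemme:fonctioncarre}); this step uses only the first-order Gaussian bounds \eqref{eq:gaussian} together with a John--Nirenberg inequality for the size. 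The sparse bound with $L^1$ averages, the weighted estimate and the weak type $(1,1)$ then all follow from Th\'eor\`eme \ref{thm:p>1}. If you want to salvage your approach, that is the route to take: replace the claim ``$\widetilde\pi_L$ is Calder\'on--Zygmund'' by the verification that its wave-packet coefficients satisfy the energy and size conditions of the framework of Section \ref{sec:haar}.
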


\begin{remark} En appliquant le Th\'eor\`eme \ref{thm:hardy}, on obtient aussi des estimations du projecteur de Leray dans des espaces de Hardy \`a poids.
\end{remark}

Dans le cas classique o\`u $A=\textrm{Id}$ alors $L$ est le Laplacien Euclidien et il est bien connu que le projecteur de Leray $\pi_L$ est un multiplicateur de Fourier v\'erifiant les conditions de Mikhlin et donc est en particulier un op\'erateur de Calder\'on-Zygmund. Ici, on \'etend les diff\'erentes estimations (type faible $L^1$ et estimations $L^p$ \`a poids), dans le cas o\`u $\pi_L$ n'est pas un op\'erateur de Calder\'on-Zygmund.

\begin{proof}
On utilise une formule de reproduction de Calder\'on, pour obtenir pour tout champ de vecteur $F,G\in L^2$
\begin{align*}
 \langle \pi_L(F) , g\rangle & = \int_0^\infty \langle \sqrt{t} (tL)^{N+1} e^{-tL}  \dive(F) ,  \sqrt{t} (tL^*)^N e^{-tL^*} \dive(A^* G) \rangle \, \frac{dt}{t} \\
& = \int_0^\infty \langle tA\nabla (tL)^N e^{-tL}  \dive(F) ,  t\nabla (tL^*)^N e^{-tL^*} \dive(A^* G) \rangle \, \frac{dt}{t}.
\end{align*}
En particulier pour le choix $N=0$, on obtient
 \begin{align*}
 \langle \pi_L(F) , g\rangle &  = \int_0^\infty \langle tA\nabla e^{-tL}  \dive(F) ,  t\nabla e^{-tL^*} \dive(A^* G) \rangle \, \frac{dt}{t} \\
 & = \sum_{k \in {\rr Z}} \int_{2^k}^{2^{k+1}} \langle t A \nabla e^{-tL}  \dive(F) ,  t \nabla e^{-tL^*} \dive(A^* G) \rangle \, \frac{dt}{t}.
 \end{align*}
Consid\'erons alors la grille dyadique dans ${\rr R}^n$
$$ {\mathcal D}:= \left\{2^{k}\left([0,1]^n+m\right), \ k\in {\mathbb Z},\ m\in{\mathbb Z}^n \right\},$$
ce qui permet d'\'ecrire
\begin{align*}
 |\langle \pi_L(F) , G\rangle| \lesssim \sum_{I\in {\mathcal D}} a_I(F) a_I^*(G) |I|
 \end{align*}
avec les coefficients d\'efinis pour $I= 2^k\left([0,1]^n+j\right)$ par
$$ a_I(F) := \left(\int_{2^{2k}}^{2^{2k+2}}  \|t \nabla e^{-tL}  \dive(F) \|_{L^2(I)}^2 \, \frac{dt}{t} \right)^{1/2} $$
et
$$ a_I^*(G) := \left(\int_{2^{2k}}^{2^{2k+2}}  \|t \nabla e^{-tL^*}  \dive(A^* G) \|_{L^2(I)}^2 \, \frac{dt}{t} \right)^{1/2}.$$
Il nous reste alors \`a v\'erifier la propri\'et\'e principale \eqref{pr:size} pour les coefficients $a_I(F)$ et $a_I^* (G)$. \'Etant donn\'e que les hypoth\`eses sur $L$ sont sym\'etriques et la multiplication par $A^*$ sur les champs de vecteurs laisse invariante les normes et moyennes $L^p$, il nous suffit de v\'erifier pour  $a_I(F)$.

Fixons un intervalle dyadique $I_0$ et on souhaite controler $\sup_{I_1 \subset I_0} \size(\cdot,I_1)$ o\`u
$$ \size(F,I_1):= \Big(\frac{1}{|I_1|} \sum_{I\subset I_1} |a_I(F)|^2\Big)^{1/2}.$$
Comme d\'ecrit dans \cite[Lemma 4.2]{MTTBiest0}, cette quantit\'e satisfait une in\'egalit\'e de John-Nirenberg et on a
\begin{equation} \sup_{I_1 \subset I_0} \ \size(F,I_1) \ \lesssim \ \sup_{I_1 \subset I_0} \ \sssize(F,I_1)  \label{eq:size0}
\end{equation}
avec la nouvelle quantit\'e 
$$ \sssize(F,I_1):= \frac{1}{|I_1|} \Big\| \Big(\sum_{I\subset I_1} |a_I(F)|^2   |I|^{-1} {\bf 1}_I\Big)^{1/2} \Big\|_{L^{1,\infty}(I_1)}.$$
Examinons alors la fonction carr\'ee apparaissant dans l'expression ci-dessus: pour tout intervalle $I_1 \subset I_0$ et tout point $x\in I_1$
\begin{align*}
\sum_{I\subset I_1} |a_I(F)|^2   |I|^{-1} {\bf 1}_I(x) & = \sum_{\genfrac{}{}{0pt}{}{k\in {\rr Z}}{2^k \leq |I_1|}} \sum_{\genfrac{}{}{0pt}{}{j}{j2^k \in I_1}}  
\int_{2^{2k}}^{2^{2k+2}}  \| t \nabla e^{-tL}  \dive(F) \|_{L^2(I)}^2  2^{-kn} {\bf 1}_{|x-j2^{k}|\leq 2^k}  \, \frac{dt}{t} \\
& \lesssim  \iint_{\genfrac{}{}{0pt}{}{y,t}{|x-y|\lesssim \sqrt{t}\lesssim \ell_{I_1}}} \left| t\nabla e^{-tL}  \dive(F)(y) \right|^2 \, \frac{dydt}{t^{1+n/2}}.
\end{align*}
Donc pour une certaine constante num\'erique $\kappa$, on en d\'eduit que 
$$ \Big(\sum_{I\subset I_1} |a_I(F)|^2   |I|^{-1} {\bf 1}_I \Big)^{1/2} \lesssim {\mathcal C}_{\kappa,I_1}(F)$$
o\`u ${\mathcal C}_{\kappa,I_1}$ est la fonction carr\'e conique tronqu\'ee, d\'efinie par
$$ {\mathcal C}_{\kappa,I_1} (F)(x) := \Big(\iint_{\genfrac{}{}{0pt}{}{|x-y|\leq \kappa \sqrt{t}}{\sqrt{t}\leq \kappa \ell_{I_1}}} \left| t \nabla e^{-tL}  \dive(F)(y) \right|^2 \, \frac{dydt}{t^{1+n/2}}\Big)^{1/2}.$$ 
Ici, $\ell_{I_1}$ est la longueur du cube $I_1$.
Notons aussi ${\mathcal C}_\kappa:={\mathcal C}_{\kappa,{\rr R}^n}$ la fonction conique compl\`ete. 

En utilisant le lemme \ref{Lemme:fonctioncarre}, on en d\'eduit que la fonction carr\'e conique tronqu\'ee ${\mathcal C}_{\kappa,J}$ v\'erifie l'estimation de type faible suivante:
$$  \frac{1}{|J|}\left\| {\mathcal C}_{\kappa,J}(F)\right\|_{L^{1,\infty}(J)} \lesssim_\kappa \sum_{\ell \geq 0} 2^{-\ell N} \left(\aver{2^\ell J} |F(y)| \, dy \right),$$
o\`u $N$ est un entier fix\'e (aussi grand que l'on veut).
On obtient donc l'estimation suivante: pour tout cube dyadique $I_1$,
$$ \sssize(F,I_1) \lesssim  \sum_{\ell \geq 0} 2^{-\ell N} \left(\aver{2^\ell I_1} |F(y)| \, dy \right) \lesssim \left(\frac{1}{|I_1|}\int  |F(y)| \chi_{I_1}(y)^M \, dy\right).$$
De \eqref{eq:size0}, on conclut que
$$ \sup_{I_1 \subset I_0} \ \sssize(F,I_1) \ \lesssim \ \sup_{I_1 \subset I_0}   \ \left(\frac{1}{|I_1|}\int  |F(y)| \chi_{I_1}(y)^M \, dy \right),$$
qui correspond \`a \eqref{pr:size}.
On peut donc appliquer les r\'esultats de la section pr\'ec\'edente, Th\'eor\`emes \ref{thm:p>1} et \ref{thm:hardy}.
\end{proof}

%

\begin{lemma}\label{Lemme:fonctioncarre}  La fonction carr\'ee conique tronqu\'ee ${\mathcal C}_{\kappa,J}$ v\'erifie l'estimation de type faible suivante:
$$  \frac{1}{|J|}\left\| {\mathcal C}_{\kappa,J}(F)\right\|_{L^{1,\infty}(J)} \lesssim_\kappa \sum_{\ell \geq 0} 2^{-\ell N} \left(\aver{2^\ell J} |F(y)| \, dy \right).$$
\end{lemma}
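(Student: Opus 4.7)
Mon approche consiste à décomposer le champ de vecteurs $F$ en parties locale et lointaines par rapport à $J$: on écrit $F = \sum_{\ell\geq 0} F_\ell$ avec $F_0:= F\mathbf{1}_{4J}$ et $F_\ell := F\mathbf{1}_{2^{\ell+2}J \setminus 2^{\ell+1}J}$ pour $\ell\geq 1$. Par sous-linéarité de ${\mathcal C}_{\kappa,J}$, il suffit d'estimer chaque ${\mathcal C}_{\kappa,J}(F_\ell)$ séparément et de sommer. L'observation clé est que le noyau de l'opérateur $t\nabla e^{-tL}\dive$ admet des estimations Gaussiennes: en écrivant $\nabla e^{-tL}\dive = \nabla e^{-tL/2} \cdot e^{-tL/2}\dive$ et en composant les noyaux, l'hypothèse \eqref{eq:gaussian} sur $\sqrt{t}\nabla K_t$ et l'estimation duale $\sqrt{t}|\dive_z K_t(y,z)|\lesssim |B(y,\sqrt{t})|^{-1} e^{-c|y-z|^2/t}$ (obtenue par transposition à partir de $\nabla K^*_t$) entraînent
$$ \left| t\nabla_y \dive_z K_t(y,z) \right| \lesssim |B(y,\sqrt{t})|^{-1} e^{-c|y-z|^2/t}. $$

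Pour les pièces lointaines $F_\ell$ avec $\ell\geq 1$: pour $x\in J$, on a $y$ à distance $\lesssim \kappa \ell_J$ de $J$ (puisque $|x-y|\leq \kappa\sqrt{t}\leq \kappa^2 \ell_J$), donc $|y-z|\gtrsim 2^\ell \ell_J$ pour $z\in \mathrm{supp}(F_\ell)$. En échangeant la décroissance Gaussienne contre une décroissance polynomiale à l'ordre $M$ (aussi grand qu'on veut), on obtient
$$ |t\nabla e^{-tL}\dive F_\ell(y)| \lesssim (\sqrt{t})^{M-n}(2^\ell \ell_J)^{-M} \int_{2^{\ell+2}J} |F(z)|\,dz. $$
En substituant dans la définition de ${\mathcal C}_{\kappa,J}$ et en intégrant sur le cône tronqué, le calcul élémentaire (volume $\sim t^{n/2}$ pour la tranche spatiale, puis intégrale en temps sur $[0, \kappa^2\ell_J^2]$) donne, pour tout $x\in J$,
$$ {\mathcal C}_{\kappa,J}(F_\ell)(x) \lesssim 2^{-\ell(M-n)}\, \aver{2^{\ell+2}J} |F|, $$
d'où $\tfrac{1}{|J|}\|{\mathcal C}_{\kappa,J}(F_\ell)\|_{L^{1,\infty}(J)} \lesssim 2^{-\ell N} \aver{2^\ell J}|F|$ en choisissant $M$ tel que $M-n\geq N$.

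Pour la pièce locale $F_0$: on se ramène à prouver que la fonction carrée conique non tronquée ${\mathcal C}_\kappa$ est de type faible $(1,1)$, ce qui entraîne
$$\tfrac{1}{|J|}\|{\mathcal C}_{\kappa,J}(F_0)\|_{L^{1,\infty}(J)}\leq \tfrac{1}{|J|}\|{\mathcal C}_\kappa(F_0)\|_{L^{1,\infty}}\lesssim \tfrac{\|F_0\|_1}{|J|} \lesssim \aver{4J}|F|.$$
Le type faible $(1,1)$ de ${\mathcal C}_\kappa$ s'établit par la méthode classique: décomposition de Calderón-Zygmund $F_0 = g + \sum_i b_i$ à un niveau $\alpha\sim\|F_0\|_1/|E|$; la partie bonne est contrôlée par Chebyshev à partir de la bornitude $L^2$ de ${\mathcal C}_\kappa$ (conséquence du calcul fonctionnel $H^\infty$ de $L$ et du fait que la norme $L^2$ de la fonction carrée conique est équivalente à celle de la fonction carrée verticale $(\int_0^\infty \|t\nabla e^{-tL}\dive F\|_2^2 dt/t)^{1/2}$); la partie mauvaise est traitée grâce à l'annulation $\int b_i=0$ et aux estimations Gaussiennes sur le noyau de $t\nabla e^{-tL}\dive$, donnant une borne ponctuelle par des fonctions de Marcinkiewicz sur le complémentaire de $\cup_i 2Q_i$.

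La difficulté principale est la dérivation propre des estimations Gaussiennes pour la composition $t\nabla e^{-tL}\dive$, puisque les hypothèses ne fournissent directement que des estimations sur $K_t$ et $\sqrt{t}\nabla K_t$; il faut exploiter la propriété de semi-groupe et la transposition pour recoller les deux ordres de dérivation. Une fois cette estimation en main, le reste est un découpage Whitney-CZ standard, où les pièces lointaines sont simplement bornées par des moyennes décroissantes, et la pièce locale hérite de la bornitude faible-(1,1) classique.
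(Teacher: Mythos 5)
Votre stratégie diffère réellement de celle du papier. Le papier ne fait aucune décomposition de Calder\'on--Zygmund : il invoque une inégalité aux « bons-$\lambda$ » (cit\'ee d'Auscher--Russ) qui compare la fonction carrée conique de $t\nabla e^{-tL}\dive F$ à la fonction maximale non-tangentielle de $\sqrt{t}\,e^{-tL}\dive F$, puis majore ponctuellement cette dernière par ${\mathcal M}\big[(1+d(\cdot,J)/r(J))^{-N}F\big]$ gr\^ace aux bornes Gaussiennes sur $\sqrt{t}\,\nabla_{z}K_t$, et conclut par le type faible $(1,1)$ de ${\mathcal M}$ : tout le poids de la preuve est ainsi reporté sur le lemme de comparaison conique/non-tangentiel. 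Dans votre proposition, la partie « lointaine » est correcte et complète : la composition $\nabla e^{-tL/2}\cdot e^{-tL/2}\dive$ donne bien des bornes Gaussiennes ponctuelles sur le noyau de $t\nabla e^{-tL}\dive$, et le calcul sur le cône tronqué fournit la décroissance $2^{-\ell(M-n)}$ annoncée (quitte à absorber les premiers $\ell\lesssim\log\kappa$ dans la pièce locale).

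En revanche, il y a une lacune réelle dans le traitement de la pièce locale. Vous ramenez tout au type faible $(1,1)$ de la fonction carrée conique globale ${\mathcal C}_\kappa$ et, pour la « mauvaise » partie de la décomposition de Calder\'on--Zygmund, vous invoquez « l'annulation $\int b_i=0$ et les estimations Gaussiennes sur le noyau ». Or l'argument classique d'annulation exige une régularité h\"old\'erienne du noyau $p_t(y,z)=t\nabla_y\nabla_z K_t(y,z)$ en la variable $z$, c'est-à-dire un contrôle de $\nabla_z^2 K_t$ (une dérivée d'ordre deux dans une même variable), que les hypothèses \eqref{eq:gaussian} ne fournissent pas et que la composition des noyaux ne permet pas de fabriquer. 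Sans cette régularité, la seule borne de taille Gaussienne ne suffit pas pour la région $t\geq \ell(Q_i)^2$ : l'intégrale $\int_{(2Q_i)^c}\big(\int_{\ell(Q_i)^2}^{\infty}t^{-n}e^{-c|x-c_i|^2/t}\,\frac{dt}{t}\big)^{1/2}dx$ diverge. La réparation standard consiste à utiliser une décomposition de Calder\'on--Zygmund adaptée au semi-groupe (Duong--McIntosh, Blunck--Kunstmann), en remplaçant $b_i$ par $(I-e^{-\ell(Q_i)^2L})^m b_i$ pour gagner la décroissance $(\ell(Q_i)^2/t)^m$ qui rétablit la sommabilité --- ou bien à suivre la voie du papier, qui évite complètement ce point. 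Notez d'ailleurs que le type faible $L^1$ des fonctionnelles carrées en $\dive$ est précisément présenté comme un résultat \emph{nouveau} dans ce papier (Proposition \ref{prop:dualcarre}) ; le tenir pour « classique » à cette étape fragilise l'argument.
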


\begin{proof} On rappelle la d\'efinition de la fonction conique
$$ {\mathcal C}_{\kappa,J} (F)(x) := \Big(\iint_{\genfrac{}{}{0pt}{}{|x-y|\leq \kappa \sqrt{t}}{\sqrt{t}\leq \kappa \ell_J}} \left| t \nabla e^{-tL}  \dive(F)(y) \right|^2 \, \frac{dydt}{t^{1+n/2}}\Big)^{1/2}.$$
En suivant \cite{AuscherRuss} (ou \cite[Section 7.3]{AMcR} dans un cadre g\'eom\'etrique), on introduit aussi la fonction maximale non-tangentielle
$$ {\mathcal M}_{\kappa,J}^{max} (F)(x) := \sup_{\genfrac{}{}{0pt}{}{|x-y|\leq \kappa \sqrt{t}}{\sqrt{t}\leq \kappa \ell_J}}  \left|\sqrt{t} e^{-tL}  \dive(F)(y) \right|.$$
Alors par  \cite[Lemma 9]{AuscherRuss} (ou \cite[Section 7.3]{AMcR} dans un cadre g\'eom\'etrique), la fonction carr\'e conique satisfait une estimation aux ``bons-$\lambda$'' par rapport \`a la fonction maximale non-tangentielle. Il est bien connu qu'une telle in\'egalit\'e permet alors d'avoir l'estimation suivante:
$$ \left\| {\mathcal C}_{\kappa,J} (F) \right\|_{L^{1,\infty}(J)} \lesssim \left\| {\mathcal M}_{\kappa,J}^{max} (F) \right\|_{L^{1,\infty}(J)}.$$
En utilisant les hypoth\`eses faites sur le semi-groupe, il est maintenant facile de contr\^oler ponctuellement la fonction maximale non-tangentielle: en effet pour tout $x\in J$  et tout $(y,t)$ tel que $|x-y|\leq \sqrt{t} \leq \kappa \ell_J$ alors, on  a
$$ \left|\sqrt{t} e^{-tL}  \dive(F)(y) \right| \lesssim_\kappa {\mathcal M} \left[\left(1+\frac{d(\cdot,J)}{r(J)}\right)^{-N} F \right](x) $$
o\`u ${\mathcal M}$ est la fonction maximale de Hardy-Littlewood. En utilisant le type $L^1$-faible de ${\mathcal M}$, on en d\'eduit donc
\begin{align*}
 \frac{1}{|J|} \left\| {\mathcal M}_{\kappa,J}^{max} (F) \right\|_{L^{1,\infty}(J)} 
 & \lesssim \frac{1}{|J|} \left\| {\mathcal M} \left[\left(1+\frac{d(\cdot,J)}{r(J)}\right)^{-N} F \right] \right\|_{L^{1,\infty}(J)} \\
 & \lesssim  \sum_{\ell \geq 0} 2^{-\ell N} \left(\aver{2^\ell J} |F(y)| \, dy \right)
 \end{align*}
o\`u $N$ est un entier aussi grand que l'on souhaite (pouvant changer d'une ligne \`a une autre).
\end{proof}

\section{Espace $BMO$, mesures de Carleson}
\label{sec:arbres}
Dans \cite{TreesBMOCarlesonMeasures}, les auteurs pr\'esentent le lien entre les `size', comme ils sont utilis\'es dans l'analyse de temps-fr\'equence, l'espace $BMO$, et les mesures de Carleson.  Les mesures de Carleson sont apparues pour la premi\`ere fois dans la d\'ecomposition en couronnes de \cite{carleson-corona}, et elles ont jou\'e un r\^ole important aussi dans le probl\`eme de racine carr\'ee de Kato.

Le `size' est d'habitude un op\'erateur maximal d\'efini sur des arbres (structure dans le plan de temps-fr\'equence). Mais si l'information fr\'equentielle depend seulement de la longueur de l'intervalle spatial (dans ce cas-l\`a on dit qu'on a une collection de rang $0$), le 'size` devient un sup apr\`es des collections d'intervalles, comme d\'efini au d\'ebut de section \ref{sec:haar}. En utilisant des ondelettes de Haar, on peut voir que la norme $BMO$ et le 'size` sont vraiment la m\^eme chose... Dans ce m\^eme cadre, nous allons montrer deux autres propri\'et\'es: l'existence d'une d\'ecomposition atomique \'eparse ainsi que la domination \'eparse par des oscillations.

\subsection{Une d\'ecomposition atomique \'eparse pour des fonctions de l'espace de Hardy}
\label{sec:atomic-dec}

Dans cette section, on montre l'existence d'une d\'ecomposition atomique \'eparse, dans le cadre sp\'ecial des espaces de Hardy associ\'es aux fonctions des Haar. 
On consid\`ere donc $(h_I)$ le syst\`eme de fonctions de Haar (normalis\'ees dans $L^2$) adapt\'e \`a la collection dyadique $\Dy^0$. Pour un exposant $p\in(0,1]$ l'espace de Hardy $H^p$ est associ\'e \`a la quasi-norme $L^p$ de la fonction carr\'ee
\begin{equation} S(f):= \Big( \sum_{I\in \Dy^0} |\langle f, h_I\rangle|^2 \frac{1_I}{|I|} \Big)^{1/2}. \label{eq:fonctioncarree}
\end{equation}

Nous allons montrer le r\'esultat suivant:

\begin{proposition} Soit $f$ une fonction appartenant \`a l'espace de Hardy $H^p$, qu'on peut \'ecrire comme une combinaison lin\'eaire finie des fonctions de Haar:
\[
f(x)=\sum_{I \in \ii{I} } \langle f, h_I \rangle h_I(x),
\]
o\`u $\ii I$ est une collection finie des intervalles dyadiques.\footnote{On rappelle que ce sous-espace de $H^p$ est dense dans $H^p$.}
Alors, il existe une collection $\mcS$ d'intervalles, une collection de coefficients $(c_Q)_{Q\in \mcS}$ et des atomes $(a_Q)_Q$ associ\'es \`a $\mcS$ tels que
\begin{itemize}
\item[$\bullet$] On a une d\'ecomposition atomique de 
$$ f =\sum_{Q \in \ic{S}} c_Q a_Q$$
\item[$\bullet$] $\mcS$ est une collection \'eparse
\item[$\bullet$] la d\'ecomposition atomique r\'ealise la norme $H^p$, c'est \`a dire que 
$$ \sum_{Q \in \mathcal{S}} c_Q^p \lesssim \|f\|_{H^p}^p.$$
\end{itemize}
\end{proposition}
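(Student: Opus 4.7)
Le plan consiste \`a adapter le temps d'arr\^et de la sous-section \ref{subsec:p<1} pour partitionner la collection finie $\ii I$ des indices en une famille de paquets $\{\ii I_Q\}_{Q\in\mcS}$ index\'ee par une collection \'eparse $\mcS$, puis \`a d\'efinir chaque atome $a_Q$ comme la partie de $f$ correspondant aux coefficients de Haar du paquet $\ii I_Q$. Fixons un exposant $r\in(0,p)$ et initialisons $\ii I_{Stock}:=\ii I$. On choisit $\mcS_0$ comme la collection des intervalles maximaux de $\ii I_{Stock}$; pour chaque $Q_0\in\mcS_0$ on pose
\[
\ii I_{Q_0} := \Big\{ I\in\ii I_{Stock},\ I\subseteq Q_0 : \frac{1}{|I|^{1/r}}\big\|S^\ast_I(f)\big\|_{L^r} \leq C\,\frac{1}{|Q_0|^{1/r}}\big\|S^\ast_{Q_0}(f)\big\|_{L^r}\Big\},
\]
o\`u $S^\ast_I(f):=\big(\sum_{J\in\ii I_{Stock},\ J\subseteq I}|\langle f, h_J\rangle|^2 \frac{\one_J}{|J|}\big)^{1/2}$, et on prend pour $ch_{\mcS}(Q_0)$ les intervalles dyadiques maximaux $Q\subsetneq Q_0$ qui contiennent un $J\in\ii I_{Stock}$ violant la condition pr\'ec\'edente. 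La bornitude $L^r\to L^{r,\infty}$ de l'op\'erateur maximal $\ic M_r$ assure le caract\`ere \'epars de $\mcS$ pour $C$ assez grande, comme dans la Proposition \ref{prop:sparse-Hilbert-trans}. On r\'einitialise $\ii I_{Stock}:=\ii I_{Stock}\setminus\bigcup_{Q_0\in\mcS_0}\ii I_{Q_0}$ et on it\`ere jusqu'\`a \'epuisement de la collection finie initiale.

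Pour chaque $Q\in\mcS$, on pose ensuite
\[
c_Q := |Q|^{1/p-1/2}\Big(\sum_{I\in\ii I_Q}|\langle f,h_I\rangle|^2\Big)^{1/2}, \qquad a_Q := c_Q^{-1}\sum_{I\in\ii I_Q}\langle f,h_I\rangle h_I.
\]
Puisque les $h_I$ sont $L^2$-orthonormales, l'identit\'e de Parseval donne $\|a_Q\|_{L^2}=|Q|^{1/2-1/p}$; $a_Q$ est support\'e dans $Q$ et d'int\'egrale nulle (car toutes les $h_I$ le sont), donc c'est un $p$-atome dyadique. La partition $\ii I=\bigsqcup_{Q\in\mcS}\ii I_Q$ produit imm\'ediatement la d\'ecomposition $f=\sum_{Q\in\mcS}c_Q a_Q$.

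Le c\oe{}ur de la preuve est l'estimation $\sum_Q c_Q^p \lesssim \|f\|_{H^p}^p$. La condition du temps d'arr\^et fournit un contr\^ole de ``size'' de la fonction carr\'ee restreinte \`a $\ii I_Q$, ce qui permet, via une in\'egalit\'e de type John-Nirenberg (\cite[Section 2.6]{multilinear_harmonic}), de remplacer la norme $L^2$ par la norme $L^r$:
\[
\frac{1}{|Q|^{1/2}}\Big(\sum_{I\in\ii I_Q}|\langle f,h_I\rangle|^2\Big)^{1/2} \lesssim \frac{1}{|Q|^{1/r}}\|S^\ast_Q(f)\|_{L^r} \lesssim \inf_{y\in Q}\ic M_r(Sf)(y).
\]
En exploitant le caract\`ere \'epars de $\mcS$ pour extraire des ensembles majeurs disjoints $E_Q\subseteq Q$ avec $|E_Q|\gtrsim |Q|$, et la bornitude $L^p$ de $\ic M_r$ (valable car $r<p$), on obtient
\[
\sum_Q c_Q^p \lesssim \sum_Q |Q|\cdot \inf_{y\in Q}\ic M_r(Sf)(y)^p \lesssim \sum_Q \int_{E_Q}\ic M_r(Sf)^p\,dx \lesssim \|Sf\|_{L^p}^p = \|f\|_{H^p}^p.
\]
L'obstacle principal sera de justifier avec soin l'in\'egalit\'e de John-Nirenberg ci-dessus, c'est-\`a-dire de v\'erifier que le contr\^ole de ``size'' h\'erit\'e du temps d'arr\^et est effectivement uniforme sur tous les sous-intervalles dyadiques de $Q$ (et pas seulement sur les \'el\'ements de $\ii I_Q$), ce qui d\'ecoulera de la maximalit\'e des intervalles s\'electionn\'es.
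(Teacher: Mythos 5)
Your proposal is correct and follows essentially the same route as the paper's own proof: the same stopping time on $L^r$ quasi-norms of localized square functions (with $r<p$), the same definition of $c_Q=|Q|^{1/p-1/2}\bigl(\sum_{I\in\ii I_Q}|\langle f,h_I\rangle|^2\bigr)^{1/2}$ and of the atoms via Parseval, sparseness via the weak-type bound for $\ic M_r$, and the final summation using John--Nirenberg, the disjoint major subsets $E(Q)$, and the $L^p$-boundedness of $\ic M_r$. The subtlety you flag at the end (uniformity of the size control needed for John--Nirenberg) is exactly the point the paper handles by noting that $\ii I_Q$ is contained in the stock available at the moment of selection, so the stopping condition transfers to all $\tilde Q\in\ii I_Q$.
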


On rappelle tout d'abord qu'un atome $a_Q$ est une fonction de $L^2(Q)$, telle que $\int_Q a_Q dx=0$, et avec la normalisation
$$ \| a_Q\|_2 \leq \vert Q \vert^{\frac{1}{2}-\frac{1}{p}}.$$

La nouveaut\'e de ce r\'esultat est d'obtenir une d\'ecomposition atomique qui est simultanément support\'ee sur une collection \'eparse. La d\'ecomposition atomique pr\'esent\'ee en \cite{TreesBMOCarlesonMeasures} a des propri\'et\'es similaires, mais les supports des atomes ne forment pas une collection \'eparse.

\begin{proof}
Pour simplicit\'e, on suppose que $\| Sf  \|_p=1 $ et on veut \'ecrire $f$ comme
\[
f(x)=\sum_{Q \in \ic{S}} c_Q a_Q(x), \quad \text{o\`u} \quad \| a_Q\|_2 \leq \vert Q \vert^{\frac{1}{2}-\frac{1}{p}} \text{   et   } \sum_{Q \in \mathcal{S}} c_Q^p \lesssim 1.
\]
On choisi $r$ un nombre r\'eel telle que $0<r<p$, et on veut construire (d'une mani\`ere it\'erative) une collection \'eparse $\ic S$ d'intervalles dyadique satisfaisant les propri\'et\'es suivantes:
\begin{enumerate}
\item[(i)] $\ic S =\bigcup_{k \geq 0} \ic S_k$.
\item[(ii)] Pour chaque $Q_0 \in \ic S_k$, il existe une collection $\ii I_{Q_0} \subseteq \ii I$ associ\'ee \`a l'intervalle $Q_0$, ainsi qu'une collection $ch_{\ic S}(Q_0)$ form\'ee par des descendants directs de $Q_0$.
\item[(iii)] D'ailleurs, $\ic S_{k+1}:=\bigcup_{Q_0 \in \ic S_k} ch_{\ic S}(Q_0)$.
\item[(iv)] La condition \'eparse est \'equivalente \`a $\ds \sum_{Q \in ch_{\ic S}(Q_0)} \vert Q \vert \leq \frac{1}{2} \vert Q_0 \vert$.
\end{enumerate}

Comme avant, on initialise $\ii I_{Stock}:= \ii I$, et on d\'efini $\ic S_0$ comme la collection des intervalles maximaux: 
$$\ic S_0:=\lbrace Q_0: Q_0 \in \ii I \text{  intervalle maximal}  \rbrace.$$
Pour chaque $Q_0 \in \ic S_0$, la collection $\ii I_{Q_0}$ se constitue par les intervalles $I' \in \ii I_{Stock}$ avec la propri\'et\'e
\[
\frac{1}{\vert I' \vert^{1/r}} \big\| \big(  \sum_{\substack{I \in \ii I_{Stock} \\ I \subseteq I'}}  \vert \langle f, h_I  \rangle \vert^2 \frac{\one_I}{\vert I \vert} \big)^{1/2} \big\|_r \leq C \cdot \frac{1}{\vert Q_0 \vert^{1/r}} \big\| \big(  \sum_{\substack{I \in \ii I_{Stock} \\ I \subseteq Q_0}} \vert \langle f, h_I  \rangle \vert^2 \frac{\one_I}{\vert I \vert} \big)^{1/2} \big\|_r.
\]
De l'autre c\^ot\'e, la collection des descendants $ch_{\ic S}(Q_0)$ se compose des intervalles dyadiques maximaux $Q \in \ii I_{Stock}$, avec $Q \subseteq Q_0$, pour lesquels 
\[
\frac{1}{\vert Q \vert^{1/r}} \big\| \big(  \sum_{\substack{I \in \ii I_{Stock} \\ I \subseteq Q}}  \vert \langle f, h_I  \rangle \vert^2 \frac{\one_I}{\vert I \vert} \big)^{1/2} \big\|_r > C \cdot \frac{1}{\vert Q_0 \vert^{1/r}} \big\| \big(  \sum_{\substack{I \in \ii I_{Stock} \\ I \subseteq Q_0}} \vert \langle f, h_I  \rangle \vert^2 \frac{\one_I}{\vert I \vert} \big)^{1/2} \big\|_r.
\]
Ainsi les intervalles des $ch_{\ic S}(Q_0)$ sont deux \`a deux disjoints et ils sont contenus dans l'ensemble 
\[
 \Big\lbrace x: {\ic M}_r \left( S_{Q_0} f \right)> C \frac{1}{\vert  Q_0 \vert ^{1/r}} \|  \big(  \sum_{\substack{I \in \ii I_{Stock} \\ I \subseteq Q_0}} \vert \langle f, h_I  \rangle \vert^2 \frac{\one_I}{\vert I \vert} \big)^{1/2} \|_r \Big\rbrace.
\]
Cela permet de montrer la condition \'eparse:
\begin{align*}
\sum_{Q \in ch_{\ic S}\left( Q_0 \right)} \vert Q \vert &\leq \big\vert \big\lbrace x: {\ic M}_r \big( \big(  \sum_{\substack{I \in \ii I_{Stock} \\ I \subseteq Q_0}} \vert \langle f, h_I  \rangle \vert^2 \frac{\one_I}{\vert I \vert} \big)^{1/2}\big)> C \frac{1}{\vert  Q_0 \vert ^{1/r}} \|  \big(  \sum_{\substack{I \in \ii I_{Stock} \\ I \subseteq Q_0}} \vert \langle f, h_I  \rangle \vert^2 \frac{\one_I}{\vert I \vert} \big)^{1/2} \|_r \big\rbrace  \big\vert \\
&\lesssim \big( C \frac{1}{\vert  Q_0 \vert ^{1/r}} \|  \big(  \sum_{\substack{I \in \ii I_{Stock} \\ I \subseteq Q_0}} \vert \langle f, h_I  \rangle \vert^2 \frac{\one_I}{\vert I \vert} \big)^{1/2} \|_r  \big)^{-r} \big\|  {\ic M}_r \big( \big(  \sum_{\substack{I \in \ii I_{Stock} \\ I \subseteq Q_0}} \vert \langle f, h_I  \rangle \vert^2 \frac{\one_I}{\vert I \vert} \big)^{1/2}\big) \big\|_{r,\infty}^r \\
&\lesssim C^{-r} \vert Q_0  \vert,
\end{align*}
qui est born\'e par $\frac{1}{2}\vert Q_0 \vert$ pour une valeur de $C$ assez grande. 

Apr\`es avoir d\'efini $\ic S_0$ et, pour chaque $Q_0 \in \ic S_0$, les collections associ\'ees $\ii I_{Q_0}$ et $ch_{\ic S}(Q_0)$, on r\'einitialise $\ii I_{Stock}:= \ii I_{Stock}\setminus \bigcup_{Q_0 \in \ic S_0} \ii I_Q$. 

La proc\'edure continue par it\'eration, mais seulement pour un nombre fini des pas, car la collection initiale des intervalles $\ii I$ est finie.


\`A la fin, on a une collection \'eparse $\ic S$ et une d\'ecomposition de la fonction $f$:
\[
f(x):=\sum_{Q \in \ic S} \sum_{I \in \ii I_Q} \langle f, h_I \rangle h_I(x)= \sum_{Q \in \ic S} c_Q a_Q(x),
\]
o\`u 
$$\ds c_Q:=\vert Q \vert^{\frac{1}{p}} \cdot \frac{1}{\vert Q \vert^{\frac{1}{2}}} \big\| \big(  \sum_{ I \in \ii I_Q}  \vert \langle f, h_I  \rangle \vert^2 \frac{\one_I}{\vert I \vert} \big)^{1/2}  \big\|_2 \quad \textrm{et} \quad a_Q:=c_Q^{-1} \sum_{I \in \ii I_Q} \langle f, h_I \rangle h_I(x).$$

Il n'est pas difficile de v\'erifier que chaque $a_Q$ est un atome dans $H^p$, dans le sens de \cite{TreesBMOCarlesonMeasures} (la propri\'et\'e de cancellation de l'atome $a_Q$ est impliqu\'ee par le fait que l'atome est engendr\'e par des fonctions de Haar $h_I$ avec $I\subset Q$). Il reste \`a v\'erifier que $\sum_{Q \in \ic S}c_Q^p \lesssim 1$. On remarque que 
\begin{align*}
&\sum_{Q \in \ic S} c_Q^p =\sum_{Q \in \ic S} \Big( \frac{1}{\vert Q \vert^{\frac{1}{2}}} \big\| \big(  \sum_{ I \in \ii I_Q}  \vert \langle f, h_I  \rangle \vert^2 \frac{\one_I}{\vert I \vert} \big)^{1/2}  \big\|_2  \Big)^p \cdot \vert Q\vert \\
& \lesssim \sum_{Q \in \ic S } \Big( \sup_{ \tilde Q \in \ii I_Q} \frac{1}{\vert \tilde  Q \vert^{\frac{1}{2}}} \big\| \big(  \sum_{\substack{ I \in \ii I_Q \\ I \subseteq \tilde Q}}  \vert \langle f, h_I  \rangle \vert^2 \frac{\one_I}{\vert I \vert} \big)^{1/2}  \big\|_2  \Big)^p \cdot \vert Q \vert,
\end{align*}
car $\ii I_Q$ contient $Q$.
Gr\^ace \`a l'in\'egalit\'e de John-Nirenberg, les moyennes $L^2$ et $L^r$ de l'oscillation sont comparables:
\[
\sup_{ \tilde Q \in \ii I_Q} \frac{1}{\vert \tilde  Q \vert^{\frac{1}{2}}} \big\| \big(  \sum_{\substack{ I \in \ii I_Q \\ I \subseteq \tilde Q}}  \vert \langle f, h_I  \rangle \vert^2 \frac{\one_I}{\vert I \vert} \big)^{1/2}  \big\|_2 \sim _{r} \sup_{ \tilde Q \in \ii I_Q} \frac{1}{\vert \tilde  Q \vert^{\frac{1}{r}}} \big\| \big(  \sum_{\substack{ I \in \ii I_Q \\ I \subseteq \tilde Q}}  \vert \langle f, h_I  \rangle \vert^2 \frac{\one_I}{\vert I \vert} \big)^{1/2}  \big\|_r.
\]
Maintenant on utilise les propri\'et\'es des collections construites par le temps d'arr\^et: chaque intervalle $I' \in \ii I_Q$ est tel que
\[
\frac{1}{\vert I' \vert^{\frac{1}{r}}} \big\| \big(  \sum_{\substack{ I \in \ii I_{Stock}^* \\ I \subseteq I'}}  \vert \langle f, h_I  \rangle \vert^2 \frac{\one_I}{\vert I \vert} \big)^{1/2}  \big\|_r \leq C \frac{1}{\vert   Q \vert^{\frac{1}{r}}} \big\| \big(  \sum_{\substack{ I \in \ii I_{Stock}^* \\ I \subseteq  Q}}  \vert \langle f, h_I  \rangle \vert^2 \frac{\one_I}{\vert I \vert} \big)^{1/2}  \big\|_r,
\]
o\`u par $\in \ii I_{Stock}^*$ on d\'enote la collection des intervalles disponibles au moment du temps d'arr\^et o\`u $\ii I_Q$ a \'et\'e construit. En particulier, $\ii I_Q$ est contenue dans $\in \ii I_{Stock}^*$ et en cons\'equence, on a
\[
\sup_{ \tilde Q \in \ii I_Q} \frac{1}{\vert \tilde  Q \vert^{\frac{1}{r}}} \big\| \big(  \sum_{\substack{ I \in \ii I_Q \\ I \subseteq \tilde Q}}  \vert \langle f, h_I  \rangle \vert^2 \frac{\one_I}{\vert I \vert} \big)^{1/2}  \big\|_r \leq C  \frac{1}{\vert   Q \vert^{\frac{1}{r}}} \big\| \big(  \sum_{\substack{ I \in \Dy^0 \\ I \subseteq  Q}}  \vert \langle f, h_I  \rangle \vert^2 \frac{\one_I}{\vert I \vert} \big)^{1/2}  \big\|_r.
\]
Dans l'expression ci-dessus, $\Dy^0$ d\'enote la collection de tous intervalles dyadiques, et on rappelle la notation $Sf(x)$ pour la fonction carr\'ee ``totale''  associ\'ee:
$$ Sf(x):=\big(  \sum_{I \in \Dy^0} \vert \langle f, h_I  \rangle \vert^2 \frac{\one_I}{\vert I \vert} \big)^{1/2}.$$

D'autre part, le caract\`ere \'epars de la collection $\ic S$ implique, pour tout $Q \in \ic S$, l'existence d'un sous-ensemble majeur $E(Q) \subseteq Q$ telle que les ensembles  $\lbrace E(Q) \rbrace_{Q \in \ic S}$ sont deux par deux disjoints. En observant que 
\[
\frac{1}{\vert Q \vert^{\frac{1}{r}}} \big\| \big(  \sum_{\substack{ I \in \ii I_Q \\ I \subseteq Q}}  \vert \langle f, h_I  \rangle \vert^2 \frac{\one_I}{\vert I \vert} \big)^{1/2}  \big\|_r \lesssim \inf_{y \in E\left( Q \right)} \ic M_r (Sf)(y),
\]
on obtient l'estimation 
\begin{align*}
\sum_{Q \in \ic S} c_Q^p &\lesssim \sum_{Q \in \ic S} \left( \inf_{y \in E\left( Q \right)} \ic M_r (Sf)(y) \right)^p \cdot 2 \vert E(Q)  \vert \\
&\lesssim \int_{\rr R} \vert \ic M_r (Sf)(x) \vert^p dx \lesssim \int_{\rr R} \vert Sf (x) \vert^p dx \lesssim 1.
\end{align*}
Ci-dessus on utilise la bornitude $L^p$ de la fonction maximale $\ic M_r$, qui est une cons\'equence de la condition $r<p$.
\end{proof}

\begin{remark}
On remarque que les atomes contiennent l'information des paquets d'onde de la collection $\ii I_Q$, qui a \'et\'e choisie telle que 
{\fontsize{10.5}{10}\[
\frac{1}{\vert Q \vert^{\frac{1}{2}}} \big\| \big(  \sum_{\substack{ I \in \ii I_Q \\ I \subseteq Q}}  \vert \langle f, h_I  \rangle \vert^2 \frac{\one_I}{\vert I \vert} \big)^{1/2}  \big\|_2 \sim \frac{1}{\vert Q \vert^{\frac{1}{r}}} \big\| \big(  \sum_{\substack{ I \in \ii I_Q \\ I \subseteq Q}}  \vert \langle f, h_I  \rangle \vert^2 \frac{\one_I}{\vert I \vert} \big)^{1/2}  \big\|_r \sim  \big\| \big(  \sum_{\substack{ I \in \ii I_Q \\ I \subseteq Q}}  \vert \langle f, h_I  \rangle \vert^2 \frac{\one_I}{\vert I \vert} \big)^{1/2}  \big\|_{BMO}.
\]}
\end{remark}

\subsection{Une certain regard sur la domination \'eparse par des oscillations} \label{sec:oscillation}

On reste toujours dans le cadre de fonctions de Haar, et on consid\`ere un multiplicateur de Haar associe \`a une collection finie des intervalles $\ii I \subset \Dy^0$:
\[
Tf(x):=\sum_{I \in \ii I} \epsilon_I \langle f, h_I \rangle h_I(x), 
\]
et sa forme bilin\'eaire $\ds \Lambda_{\ii I}(f, g):=\sum_{I \in \ii I} \epsilon_I \langle f, h_I \rangle \langle g, h_I \rangle$. Ici les coefficients $a_I$ sont uniform\'ement bornes: $\vert \epsilon_I \vert \lesssim 1$ pour tous $I \in \ii I$.

\begin{proposition}
\label{prop-domination-oscillations}
Si $T$ est un multiplicateur de Haar comme d\'efini ci-dessus, et si $f$ et $g$ sont deux fonctions dans $L^2$, alors il existe une collection \'eparse $\ic S$, d\'ependant de $f$ et $g$, telle que
\[
\vert \Lambda_{\ii I}(f, g) \vert \lesssim \sum_{Q \in \ic S} \left(\frac{1}{\vert Q \vert} \int_Q \vert f(x) -\aver{Q}f  \vert dx\right) \cdot \left(\frac{1}{\vert Q \vert} \int_Q \vert g(x) -\aver{Q}g  \vert dx\right) \cdot \vert Q \vert.
\]
\end{proposition}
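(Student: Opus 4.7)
Le plan est d'adapter le temps d'arr\^et de la sous-section \ref{subsec:carre} en faisant porter le crit\`ere directement sur l'oscillation $L^1$ de $f$ et de $g$. L'observation clef est que les fonctions de Haar ont moyenne nulle, donc pour $I \subseteq Q_0$ on peut remplacer $f$ par $f - \aver{Q_0} f$ dans $\langle f, h_I \rangle$. Une application directe du Th\'eor\`eme \ref{thm:sparsecarre} avec $p=q=1$ ne suffirait pas, car $\aver{Q}|S_Q f|\,dx$ contr\^ole $\aver{Q}|f - \aver{Q}f|\,dx$ mais pas l'inverse (c'est la direction $H^1 \subset L^1$); tout l'int\'er\^et du temps d'arr\^et sera d'assurer, via une d\'ecomposition de Calder\'on-Zygmund, le contr\^ole qui force l'\'equivalence.

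\textbf{Temps d'arr\^et.} On initialisera $\ii I_{Stock} := \ii I$, et $\ic S_0$ sera la collection des intervalles maximaux de $\ii I$. Pour chaque $Q_0 \in \ic S_0$, en posant $\alpha(Q_0) := \aver{Q_0}|f - \aver{Q_0}f|\,dx$ et $\beta(Q_0)$ l'analogue pour $g$, on prendra comme descendants $ch_{\ic S}(Q_0)$ les intervalles dyadiques $Q \subsetneq Q_0$ maximaux v\'erifiant
\[
\aver{Q}|f - \aver{Q_0}f|\,dx > C \alpha(Q_0) \quad \text{ou} \quad \aver{Q}|g - \aver{Q_0}g|\,dx > C \beta(Q_0).
\]
Le type faible $(1,1)$ de la fonction maximale dyadique donnera $\sum_{Q \in ch_{\ic S}(Q_0)} |Q| \leq \tfrac{1}{2}|Q_0|$ pour $C$ assez grand. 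On d\'efinira $\ii I_{Q_0} := \{I \in \ii I_{Stock} : I \subseteq Q_0,\ I \not\subseteq Q \text{ pour tout } Q \in ch_{\ic S}(Q_0)\}$ (il est crucial d'exclure aussi le cas $I = Q$), on r\'einitialisera $\ii I_{Stock}$ en enlevant $\ii I_{Q_0}$, et on it\'erera, aboutissant \`a $\ii I = \bigsqcup_{Q_0 \in \ic S} \ii I_{Q_0}$.

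\textbf{Estim\'ee locale via d\'ecomposition de Calder\'on-Zygmund.} Pour chaque $Q_0 \in \ic S$, on d\'ecomposera $(f - \aver{Q_0}f)\one_{Q_0} = g_f + b_f$ o\`u $b_f := \sum_{Q \in ch_{\ic S}(Q_0)} (f - \aver{Q}f)\one_Q$ aura int\'egrale nulle sur chaque descendant. Deux observations s'imposent alors:
\begin{itemize}
\item Pour $I \in \ii I_{Q_0}$ et $Q \in ch_{\ic S}(Q_0)$, par nesting dyadique on a soit $I \cap Q = \emptyset$, soit $I \supsetneq Q$; dans le second cas $h_I$ est constante sur $Q$ (puisque $Q$ est strictement contenu dans l'une des moiti\'es dyadiques de $I$), d'o\`u $\langle b_f, h_I \rangle = 0$.
\item On aura $\|g_f\|_\infty \lesssim \alpha(Q_0)$: sur $E_{Q_0} := Q_0 \setminus \bigcup Q$, la diff\'erentiation de Lebesgue combin\'ee au fait que tout intervalle dyadique contenant un point de $E_{Q_0}$ \'evite le crit\`ere d'arr\^et donnera $|f - \aver{Q_0} f| \lesssim \alpha(Q_0)$ presque partout; sur chaque $Q \in ch_{\ic S}(Q_0)$, la valeur constante $|\aver{Q}f - \aver{Q_0}f|$ sera contr\^ol\'ee par l'oscillation sur le parent non-arr\^et\'e de $Q$.
\end{itemize}

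\textbf{Synth\`ese.} Par l'identit\'e de Plancherel,
\[
\sum_{I \in \ii I_{Q_0}} |\langle f, h_I \rangle|^2 = \sum_{I \in \ii I_{Q_0}} |\langle g_f, h_I \rangle|^2 \leq \|g_f\|_2^2 \lesssim \alpha(Q_0)^2 |Q_0|,
\]
et de m\^eme pour $g$; Cauchy-Schwarz sur la forme bilin\'eaire localis\'ee $\Lambda_{\ii I_{Q_0}}(f,g)$ donnera $|\Lambda_{\ii I_{Q_0}}(f,g)| \lesssim \alpha(Q_0)\beta(Q_0)|Q_0|$, et la sommation sur $\ic S$ conclura. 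Le passage le plus d\'elicat sera l'annulation $\langle b_f, h_I \rangle = 0$ pour $I \in \ii I_{Q_0}$: elle exploite sp\'ecifiquement la propri\'et\'e des fonctions de Haar d'\^etre constantes sur tout sous-intervalle dyadique strict de leur support, et impose de bien exclure le cas $I = Q$ dans la d\'efinition de $\ii I_{Q_0}$.
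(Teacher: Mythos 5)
Votre preuve est correcte, mais elle emprunte une route sensiblement diff\'erente de celle du papier. Le papier fait porter le temps d'arr\^et sur la quasi-norme $L^{1,\infty}$ de la fonction carr\'ee localis\'ee, compar\'ee \`a l'oscillation $L^1$ sur $Q_0$; la propri\'et\'e \'eparse y repose sur la bornitude $L^1 \to L^{1,\infty}$ de la composition de la fonction maximale faible $\ic M_{\mathfrak{w}}$ avec la fonction carr\'ee (r\'esultat non trivial, emprunt\'e \`a Bruna--Korenblum), et l'estim\'ee locale s'obtient par Cauchy--Schwarz puis John--Nirenberg via la Proposition \ref{prop:localization}. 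Vous faites au contraire porter le crit\`ere d'arr\^et directement sur les moyennes $L^1$ de $\vert f-\aver{Q_0}f\vert$, de sorte que la propri\'et\'e \'eparse ne requiert que le type faible $(1,1)$ \'el\'ementaire de la fonction maximale dyadique; puis vous remplacez John--Nirenberg par une d\'ecomposition de Calder\'on-Zygmund de $(f-\aver{Q_0}f)\one_{Q_0}$ au niveau $\alpha(Q_0)$: l'annulation $\langle b_f,h_I\rangle=0$ pour $I\in \ii I_{Q_0}$ (correcte, et dont vous identifiez bien le point d\'elicat, \`a savoir l'exclusion du cas $I=Q$) r\'eduit l'estim\'ee locale \`a l'in\'egalit\'e de Bessel appliqu\'ee \`a la bonne partie, born\'ee par $\alpha(Q_0)$. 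Votre argument est plus \'el\'ementaire et autonome --- il est d'ailleurs dans l'esprit de la preuve de la Proposition \ref{prop:typefaible}, o\`u la mauvaise partie est \'egalement annihil\'ee --- mais il exploite de fa\c{c}on essentielle le fait que $h_I$ est constante sur tout sous-intervalle dyadique strict de son support: il ne s'\'etendrait donc pas tel quel aux coefficients g\'en\'eraux $a_I(f)=\langle f,\phi_I\rangle$ de la section \ref{sec:haar}, l\`a o\`u l'approche du papier par fonctions carr\'ees et John--Nirenberg reste uniforme. Pour le multiplicateur de Haar tel qu'\'enonc\'e, les deux routes aboutissent \`a la m\^eme domination \'eparse par les oscillations.
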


\begin{remark}
Dans le cas particulier o\`u l'op\'erateur co\"incide \`a l'identit\'e, on obtient pour deux fonctions $f,g\in L^2$
\[
\int_{\rr R} f(x) \cdot g(x) dx \lesssim \int_{\rr R} \ic M^{\#}f(x) \cdot \ic M^{\#}g(x) dx.
\]
Ceci am\'eliore l'in\'egalit\'e de dualit\'e \cite[(16) page 146]{SteinBigBook} et correspond \`a une version ``polaris\'ee'' de l'in\'egalit\'e $L^2$ de Fefferman-Stein \cite[Corollary 1, page 154]{SteinBigBook}.
\end{remark}

\begin{proof}
On commence par deux observations:
\begin{enumerate}[label=(\roman*), ref=\roman*]
\item \label{obs-one} si $F_Q(x):= \left( f(x) -\aver{Q} f \right) \cdot \one_Q(x)$, alors
\[
\big(  \sum_{I \subset Q } \vert \langle f, h_I \rangle  \vert^2 \cdot \frac{\one_I(x)}{\vert  I \vert}   \big)^{1/2}= \big(  \sum_{I \subset Q } \vert \langle F_Q, h_I \rangle  \vert^2 \cdot \frac{\one_I(x)}{\vert  I \vert}   \big)^{1/2}.
\]
\item \label{obs-two} Si $f$ admet une repr\'esentation dans la base de Haar $\ds f=\sum_{I \in \Dy^0} \langle f, h_I  \rangle h_I$, alors pour tout intervalle dyadique $Q$, on a
\[
\sum_{\substack{I \in \Dy^0 \\  I \subset Q }} \vert \langle f, h_I  \rangle \vert^2= \big\|  \big(  \sum_{\substack{I \in \Dy^0 \\  I \subset Q }} \vert \langle f, h_I \rangle  \vert^2 \cdot \frac{\one_I(x)}{\vert  I \vert}   \big)^{1/2} \big\|_2^2 = \int_Q \vert  f(x) -\aver{Q} f \vert^2 dx.   
\]
\end{enumerate}

La preuve de la Proposition \ref{prop-domination-oscillations} est toujours fond\'ee sur un temps d'arr\^et, mais avec des conditions un peu diff\'erentes. On initialise  $\ii I_{Stock}:=\ii I$ et $$\ic S_0:=\lbrace Q_0 \in \ii I_{Stock} : Q_0 \text{ maximal par rapport \`a l'inclusion}  \rbrace.$$

Pour tout $Q_0 \in \ic S_0$, la collection $\ii I_{Q_0}$ est form\'ee d'intervalles dyadiques $I' \in \ii I_{Stock}$ contenus dans $Q_0$, telle que
\begin{align*}
&\frac{1}{\vert I' \vert} \big\|  \big(  \sum_{I \subset I' } \vert \langle f, h_I \rangle  \vert^2 \cdot \frac{\one_I(x)}{\vert  I \vert}   \big)^{1/2} \big\|_{1, \infty} \leq C \cdot \frac{1}{\vert Q_0 \vert} \int_{Q_0} \vert  f(x) -\aver{Q_0} f \vert dx \quad \text{ et }\\
&\frac{1}{\vert I' \vert} \big\|  \big(  \sum_{I \subset I' } \vert \langle g, h_I \rangle  \vert^2 \cdot \frac{\one_I(x)}{\vert  I \vert}   \big)^{1/2} \big\|_{1, \infty} \leq C \cdot \frac{1}{\vert Q_0 \vert} \int_{Q_0} \vert  g(x) -\aver{Q_0} g \vert dx
\end{align*}

On note que $Q_0 \in \ii I_{Q_0}$: par l'observation \eqref{obs-one}, l'expression \`a gauche est $\|  S F_{Q_0}  \|_{1, \infty}$, qui est contrôl\'ee par $\| F_{Q_0}  \|_1$. La m\^eme observation est vraie pour la fonction $g$. On rappelle que $S$ est la fonction carr\'ee compl\`ete introduit en \eqref{eq:fonctioncarree}.

Les descendants de $Q_0$ sont les sous-intervalles maximaux pour lesquels la condition du temps d'arr\^et n'est plus vraie. C'est \`a dire, $ch_{\ic S}(Q_0)$ est form\'e par des intervalles de $\ii I_{Stock}$, contenus dans $Q_0$, tels que 
\begin{align*}
& \frac{1}{\vert Q \vert} \big\|  \big(  \sum_{\substack{I \subset Q    \\ I \in \ii I_{Stock}} } \vert \langle f, h_I \rangle  \vert^2 \cdot \frac{\one_I(x)}{\vert  I \vert}   \big)^{1/2} \big\|_{1, \infty} > C \cdot \frac{1}{\vert Q_0 \vert} \int_{Q_0} \vert  f(x) -\aver{Q_0} f \vert dx \quad \text{ou}\\
&\frac{1}{\vert Q \vert} \big\|  \big(  \sum_{\substack{I \subset Q    \\ I \in \ii I_{Stock}}} \vert \langle g, h_I \rangle  \vert^2 \cdot \frac{\one_I(x)}{\vert  I \vert}   \big)^{1/2} \big\|_{1, \infty} > C \cdot \frac{1}{\vert Q_0 \vert} \int_{Q_0} \vert  g(x) -\aver{Q_0} g \vert dx.
\end{align*}
Puis on r\'einitialise $\ds\ii I_{Stock}:= \ii I_{Stock} \setminus \bigcup_{Q_0 \in \ic S} \ii I_{Q_0}$ et on red\'emarre la proc\'edure en choisissant d'une mani\`ere similaire les collections $\ii I_Q$ et $ch_{\ic S}(Q)$ pour tout $Q \in ch_{\ic S}(Q_0)$. \`A la fin on obtient $\ic S :=\cup_{k\geq 0} \ic S_k$.

Il reste \`a montrer la propri\'et\'e \'eparse de la collection $\ic S$, i.e. pour chaque $Q_0 \in \ic S$, 
\[
\sum_{Q \in ch_{\ic S \left( Q_0 \right)}} \vert Q \vert \leq \frac{1}{2} \vert Q_0 \vert.
\]
La d\'efinition de la collection $ch_{\ic S}(Q_0)$ garantit que les intervalles $Q \in ch_{\ic S}(Q_0)$ sont deux \`a deux disjoints et qu'ils sont contenus dans l'ensemble
\begin{equation}
\label{eq:level-set-weak-max-f}
\Big\lbrace \ic M_{\mathfrak{w}} \big(  \big(  \sum_{I \subset Q_0 } \vert \langle F_{Q_0}, h_I \rangle  \vert^2 \cdot \frac{\one_I}{\vert  I \vert}   \big)^{1/2}  \big)(x)>  C \cdot \frac{1}{\vert Q_0 \vert} \int_{Q_0} \vert  f(x) -\aver{Q_0} f \vert dx   \Big\rbrace,
\end{equation}
o\`u, bien s\^ur, dans l'ensemble d\'efinit similairement par $g$. Ici l'op\'erateur $\ic M_{\mathfrak{w}}$ est la fonction maximale faible d\'efinie par
\[
\ic M_{\mathfrak{w}}(f)(x):=\sup_{B \ni x} \frac{1}{\vert B  \vert} \big\| f \cdot \one_B   \big\|_{1, \infty},
\]
o\`u on consid\`ere le $\sup$ apr\`es tout boules contenant $x$. La fonction maximale faible $\ic M_{\mathfrak w}$ n'agit pas de $L^{1, \infty}$ dans $L^{1, \infty}$ (\cite{weak-max-func}), mais sa composition avec un op\'erateur de Calder\'on-Zygmund agit de $L^{1}$ dans $L^{1,\infty}$ (\cite{weak-max-func+CZop}).

Par cons\'equent, la mesure de l'ensemble de \eqref{eq:level-set-weak-max-f} est major\'ee par
\[
\left(  C \cdot \frac{1}{\vert Q_0  \vert} \| F_{Q_0}  \|_1 \right)^{-1} \big\|   M_{\mathfrak{w}} \big( \sum_{I \subset Q_0 } \vert \langle F_{Q_0}, h_I \rangle  \vert^2 \cdot \frac{\one_I}{\vert  I \vert}   \big)^{1/2} \big\|_{1, \infty} \lesssim C^{-1} \vert  Q_0 \vert,
\]
qui entraine la propri\'et\'e \'eparse, en choisissant une constante $C$ assez grande.

\`A la fin, pour tout $Q \in \ic S$, on a une sous-collection $\ii I_Q$ des intervalles dyadiques et $\ds \ii I :=\bigcup_{Q \in \ic S} \ii I_Q$. Ainsi
\[
\Lambda_{\ii I}(f, g)=\sum_{Q \in \ic S} \Lambda_{\ii I_Q}(f, g).
\]

L'estimation locale pour la forme bilin\'eaire, ainsi que l'in\'egalit\'e de John-Nirenberg, impliqu\'ee par Proposition \ref{prop:localization}
{\fontsize{10.5}{10}
\begin{align*}
\vert \Lambda_{\ii I_{Q}}(f,g) \vert &\leq \sup_{I \in \ii I_Q} \vert \epsilon_I \vert \sup_{I' \in \ii I_{Q}} \frac{1}{\vert I' \vert} \big\|  \Big( \sum_{\substack{I \subseteq I' \\ I \in \ii I_{Q}}}  \frac{\vert \langle f, \phi_I \rangle \vert^2}{\vert I \vert}  \one_I \Big)^{1/2}   \big\|_{1, \infty} \cdot  \sup_{I' \in \ii I_{Q}} \frac{1}{\vert I' \vert} \big\|  \Big( \sum_{\substack{I \subseteq I' \\ I \in \ii I_{Q}}}  \frac{\vert \langle g, \phi_I \rangle \vert^2}{\vert I \vert}  \one_I \Big)^{1/2} \big\|_{1, \infty} \cdot \vert Q  \vert 
\\
&\lesssim \sup_{I \in \ii I_Q} \vert \epsilon_I \vert \cdot \left(\frac{1}{\vert Q \vert} \int_{Q} \vert  f(x) -\aver{Q} f \vert dx \right) \cdot \left(\int_{Q} \vert  g(x) -\aver{Q} g \vert dx\right),
\end{align*}}
ce qui conclut l'estimation \'eparse.
\end{proof}

\bibliographystyle{alpha}
\newcommand{\etalchar}[1]{$^{#1}$}

\end{document}